\newtheorem{Ass}{Assumption} 
\newtheorem{theorem}{Theorem}[section]
\newtheorem{lemma}[theorem]{Lemma}
\newtheorem{proposition}[theorem]{Proposition}
\newtheorem{corollary}[theorem]{Corollary}
\newtheorem{definition}[theorem]{Definition}
\newtheorem{example}[theorem]{Example}
\newtheorem{remark}[theorem]{Remark}
\newcounter{syscounter}
\newenvironment{sysnum}{\begin{list}{($\Sigma{\arabic{syscounter}}$)}%
{\settowidth{\labelwidth}{($\Sigma4$)}
\settowidth{\leftmargin}{($\Sigma4$)~}%
\usecounter{syscounter}}}
{\end{list}}
\newcommand \N   {\mathbb{N}}
\newcommand \R   {\mathbb{R}}
\newcommand \C   {\mathbb{C}}
\newcommand \K   {\mathcal{K}}
\newcommand \Kinf{\mathcal{K_\infty}}
\newcommand \KL  {\mathcal{KL}}
\newcommand \LL  {\mathcal{L}}
\newcommand \Dc   {\mathcal{D}}
\newcommand \qrq   {\quad\Rightarrow\quad}
\newcommand \Iff   {\Leftrightarrow}
\newcommand \eps {\varepsilon}
\newcommand{\abs}[1]{\left|#1\right|}
\newcommand{\norm}[1]{\left\Vert #1\right\Vert}
\newcommand \Sigmafp {0}
\journal{}
\begin{document}

\begin{frontmatter}

\title{Non-coercive Lyapunov functions for infinite-dimensional systems}

\author[Pas]{Andrii Mironchenko}
\ead{andrii.mironchenko@uni-passau.de}

\author[Pas]{Fabian Wirth}
\ead{fabian.(lastname)@uni-passau.de}

\address[Pas]{Faculty of Computer Science and Mathematics, University of Passau,
Innstra\ss e 33, 94032 Passau, Germany }

\begin{abstract}
We show that the existence of a non-coercive Lyapunov function is sufficient for uniform
global asymptotic stability (UGAS) of infinite-dimensional systems with
external disturbances provided the speed of decay is measured in terms of
the norm of the state and an additional mild assumption is satisfied. 
For evolution equations in Banach spaces with Lipschitz continuous
nonlinearities these additional assumptions become especially simple. The
results encompass some recent results on linear switched systems on Banach
spaces. 
Finally, we derive new non-coercive converse Lyapunov theorems and give
some examples showing the necessity of our assumptions.
\end{abstract}

\begin{keyword}
nonlinear control systems, infinite-dimensional systems, Lyapunov methods, global asymptotic stability
\end{keyword}

%\subjclass[2010]{Primary 37B25, 37L15, 93D05, 93D09}

\end{frontmatter}

\section{Introduction}
\label{sec:introduction}

The theory of Lyapunov functions is one of the cornerstones in the
analysis and synthesis of dynamical systems. Since its origins due to
Lyapunov there have been numerous developments leading first to sufficient 
and later on to necessary conditions for various dynamical properties
expressed in terms of Lyapunov functions, see \cite{Yos66,Hah67,LSW96,Kel15}. The uses of Lyapunov functions are manifold. Originally invented
by Lyapunov to characterize stability properties of fixed points, or more
complex attractors, they have become useful in other contexts. In existence
theory they provide bounds on the transients of solutions and so in some
situations conditions for forward completeness of trajectories,
\cite{AnS99}. A criterion for the existence of a bounded absorbing ball is
formulated in \cite[Theorem 2.1.2]{chueshov2015dynamics}.
In the finite-dimensional
case they have also been used to investigate the geometric structure of
the general solution of differential equations \cite{wilson1967structure}
and to analyze coordinate-free notions of growth rates
\cite{grune1999asymptotic}. The complete characterization of
attractor-repeller pairs of continuous-time dynamical systems has been
obtained in \cite{wilson1973lyapunov,conley1988gradient}. Some of these uses extend from
finite-dimensional applications to the infinite-dimensional case, while
others use distinct finite-dimensional arguments.

On the other hand numerous converse results have been obtained which prove
the existence of certain types of Lyapunov functions characterizing
different stability notions, \cite{Kel15}.
Before starting to look for a Lyapunov function it is highly desirable to
know in advance that such a Lyapunov function for a given class of systems
exists.  The first results guaranteeing existence of Lyapunov functions
for asymptotically stable systems appeared in the works of Kurzweil
\cite{Kur56} and Massera \cite{Mas56}.  These have been
generalized in different directions
\cite{Hen81,Dei92,KaJ11b,CLS98,Kel15,Sch15, TeP00}.  In particular, in
\cite{KaJ11b, TeP00} the well-known Sontag's $\KL$-lemma \cite{Son98} has
been elegantly used in order to prove global Lyapunov theorems with the
help of Yoshizawa's method \cite[Theorem 19.3]{Yos66},
\cite[Theorem 4.2.1]{Hen81}.  

In this paper we address a particular aspect of infinite-dimensional
systems by providing Lyapunov stability results for non-coercive Lyapunov
functions and converse theorems. Throughout the paper we will be dealing
with a fixed point at the origin, $x^*=0$, which imposes no restrictions
as long as we are interested in the stability properties of fixed
points. The question of coercivity of Lyapunov functions is subtle and
statements that are obvious in finite dimensions become intricate or even
wrong in infinite dimensions. The standard definition of a Lyapunov
function $V$, found in many textbooks on finite-dimensional dynamical
systems, is that it should be a continuous (or more regular) positive
definite and proper function, i.e. a function for which there exist
$\Kinf$\footnote{Increasing, unbounded, continuous, positive definite
  function.} functions $\psi_1,\psi_2,\alpha$ such that
\begin{equation}
    \label{eq:2}
    \psi_1(\|x\|) \leq V(x) \leq \psi_2(\|x\|) \quad \forall x \in X,
\end{equation}
and such that 
\begin{equation}
    \label{eq:3}
    \dot V(x) < - \alpha(\|x\|) \quad \forall x \in X\backslash\{0\}, 
\end{equation}
where $\dot V(x)$ is some sort of generalized derivative of $V$ along the
trajectories of the system, see below for a precise definition. Of course, if we have \eqref{eq:2} then we
may just as well require that there is a $\gamma\in\Kinf$ such that 
\begin{equation}
    \label{eq:3b}
    \dot V(x) < - \gamma(V(x)),
\end{equation}
because in the presence of \eqref{eq:2} we clearly have an equivalence of
\eqref{eq:3} and \eqref{eq:3b}. 

The inequality \eqref{eq:3b} shows that $V(x(t))$ converges to zero in a
uniform way as $t \to \infty$ (by the "comparison principle"), and \eqref{eq:2} implies that $\|x(t)\|$
has the same asymptotic behavior.  This simple argument remains (up to
some minor technicalities) the same also for infinite-dimensional systems
and has been applied for stability analysis, e.g.  in
\cite{Paz83,Dei92,CaH98}.  

On the other hand, converse Lyapunov theorems proved for wide classes of
infinite-dimensional systems show that asymptotic stability guarantees the
existence of a proper and positive definite Lyapunov function.

A first inkling that this is not the complete story comes from the study
of linear systems. In a seminal paper \cite{Dat70} Datko proved the
following. If $X$ is a Hilbert space and $A$ the generator of a $C_0$-semigroup on $X$, then the system $\dot x = A x$ is exponentially stable
if and only if there exists a positive definite bilinear form on $X$
(generated by a certain bounded positive definite linear operator $P$)
such that for all $x \in D(A)$ we have the following estimate for the scalar product of $Px, Ax$:
\begin{equation*}
    \langle Px , Ax \rangle < - \|x\|^2.
\end{equation*}
This is a natural extension of the finite-dimensional Lyapunov inequality.

At the same time the operator $P$ need not be coercive, so the natural Lyapunov function $V:x\mapsto \langle Px , x \rangle$ for the linear system $\dot x = Ax$ 
does not satisfy \eqref{eq:2}. In fact, there exist	 exponentially stable
$C_0$-semigroups on Hilbert spaces such that there does not exist an
equivalent scalar product under which the semigroup is a strict
contraction semigroup, \cite{Che76}. Hence, the non-coercivity of $P$
cannot be avoided in general.
In this situation, it appears that the left inequality in
\eqref{eq:2} is an artifact of the finite-dimensional origin of
the theory. In infinite dimensions it may sometimes be easier and more natural to derive
Lyapunov functions which have the weaker property that
\begin{equation}
    \label{eq:2b}
    0 < V(x) \leq \psi_2(\|x\|)\,,\quad x\neq 0.
\end{equation}
The next question then becomes under which decay conditions the existence
of such a non-coercive Lyapunov function guarantees uniform asymptotic stability. 

In this paper we provide a positive and a negative answer. We show that
for a large class of dynamical systems on Banach spaces, the existence of
a non-coercive Lyapunov function satisfying \eqref{eq:2b} and the decay
estimate \eqref{eq:3} guarantees global uniform asymptotic stability of
the origin provided a forward completeness property is satisfied and the
fixed point is robust. This is achieved using uniform Barbalat-like
estimates.  On the other hand, we show by examples that the results cannot
be extended much further. In particular, non-coercive Lyapunov functions
cannot be used in conjunction with a decay estimate in terms of the
Lyapunov function itself as in \eqref{eq:3b}. To show this, in Section~\ref{sec:examples} we give an
example of an exponentially unstable linear system with a bounded
generator admitting a Lyapunov function satisfying conditions
\eqref{eq:2b} and \eqref{eq:3b}.  Also it is a disadvantage that we have
to assume forward completeness whereas in many cases coercive Lyapunov
functions give this property for free. Example~\ref{example:notRFC} shows
that non-coercive Lyapunov functions do not guarantee forward completeness
even in the finite dimensional case.  Also they cannot be used to obtain bounds on the
transient behavior.

To compare this with a case of coercive Lyapunov functions, note that to
obtain boundedness of solutions on bounded intervals it is sufficient that
\begin{equation}
    \label{eq:3c}
    \dot V(x) < \alpha(\|x\|),
\end{equation}
where again we may appeal to \eqref{eq:2} to see that this is equivalent
to $\dot V(x) < \gamma(V(x))$ for a suitable function $\gamma$. In
\cite{AnS99} it is shown for a class of finite-dimensional systems that
forward completeness is equivalent to the existence of a coercive Lyapunov
function satisfying $\dot V(x) < V(x)$.  This argument is also applicable
to several classes of nonlinear infinite dimensional systems, see
\cite[Chapter 6]{Paz83}, \cite[Chapter 1]{KaJ11b}. As we will see in the
examples the assumption of coercivity cannot be relaxed if it is the aim
to conclude that solutions exist for all positive times from the fact that
solutions do not escape to infinity in finite-time.

If the origin is not a robust equilibrium, the existence of a non-coercive
Lyapunov function for a robustly forward complete system does not ensure
uniform global asymptotic stability of the origin. However, in this case
the origin still exhibits a form of attractivity.  Motivated by the
classical notion of weak attractivity \cite{Bha66, bhatia2002stability,
  SoW96} we introduce the stronger concept of uniform weak attractivity.
In the finite-dimensional case, this does not add anything new, but in
infinite dimensions uniform weak attractivity is essentially stronger than
weak attractivity.  
%For robustly forward complete systems we characterize
%uniform global asymptotic stability as uniform global weak attractivity
%together with uniform stability. 
We show that the existence of a non-coercive Lyapunov function for a forward complete system
ensures that $0$ is uniformly globally weakly attractive.

In addition to the non-coercive Lyapunov theorem we also derive a converse
theorem for system which have flows with a Lipschitz continuity
property. The construction is motivated by classical converse theorems and
Yoshizawa's method \cite[Theorem 19.3]{Yos66}, \cite[Theorem
4.2.1]{Hen81}.  We derive in Section~\ref{sec:converse-theorems} a novel
"integral" construction of a non-coercive Lipschitz continuous Lyapunov
function for a UGAS system with a Lipschitz continuous flow map. The
construction appears to be novel, maybe because so far it has been unclear what the
interest in a non-coercive Lyapunov function is, in general.

The paper is organized as follows. In
  Section~\ref{sec:problem-statement} we introduce an abstract class of
  dynamical systems subject to time-varying disturbances and define the
  various stability concepts used in this paper.  In particular the
  concepts of robust forward completeness and robustness of the
  equilibrium point will be of importance and we introduce
  characterizations of these.  Section~\ref{sec:nonc-lyap-theor} is
  devoted to the derivation of sufficient conditions for uniform global
  and local asymptotic stability of the fixed point in terms of
  non-coercive Lyapunov functions. 
	In this section the assumptions of
  robust forward completeness and robustness of the equilibrium are vital. 
	However, we show that if the system fails to satisfy these additional properties, 
	then the origin is a global uniform weak attractor.
	  %In Section~\ref{sec:weak-attractivity} we investigate what conclusions
  %can be drawn from the existence of a non-coercive Lyapunov function if
  %only forward completeness of the system is assumed. In this context the
  %notions of Lagrange stability and uniform weak attractivity become important.
  In order to showcase the applicability of the results,
  Section~\ref{sec:Examples_I} discusses more concrete system
  classes which satisfy our general assumption.  In
  Section~\ref{sec:converse-theorems} we derive a converse theorem for
  systems which have a Lipschitz continuous flow.
  Section~\ref{sec:examples} discusses examples which give counterexamples
  showing that some of our results cannot be extended much
  further. We conclude in Section~\ref{sec:conclusions}.
	
	Some of results proved in this paper have been presented at the 10th IFAC Symposium on Nonlinear Control Systems (NOLCOS 2016) \cite{MiW16b}.

\subsection{Notation}

The following notation will be used throughout these notes. By $\R_+$ we
denote the set of nonnegative real numbers. For an arbitrary set $S$ and
$n\in\N$ the $n$-fold Cartesian product is $S^n:=S\times\ldots\times
S$. For normed linear spaces $X,Y$ we denote by $\mathcal{L}(X,Y)$ the space of
linear bounded operators acting from $X$ to $Y$ and we abbreviate $\mathcal{L}(X):=\mathcal{L}(X,X)$.
The open ball in a normed linear space $X$ with radius $r$ and center in $y \in X$ is denoted by $B_r(y):=\{x\in X \ |\  \|x-y\|_X<r\}$ (the space $X$ in which the ball is taken, will always be clear from the context). For short, we denote $B_r:=B_r(0)$. The (norm)-closure of a set $S \subset X$ will be denoted by $\overline{S}$.

For the formulation of
stability properties the following classes of comparison functions are
useful, see \cite{Hah67,Kel14}:
\begin{equation*}
\begin{array}{ll}
{\K} &:= \left\{\gamma:\R_+ \to \R_+ \left|\ \right. \gamma\mbox{ is continuous and strictly increasing, }\gamma(0)=0\right\},\\
{\K_{\infty}}&:=\left\{\gamma\in\K\left|\ \gamma\mbox{ is unbounded}\right.\right\},\\
{\LL}&:=\left\{\gamma:\R_+ \to \R_+ \left|\ \gamma\mbox{ is continuous and decreasing with}\right.
 \lim\limits_{t\rightarrow\infty}\gamma(t)=0 \right\},\\
{\KL} &:= \left\{\beta: \R_+^2 \to \R_+ \left|\ \right. \beta(\cdot,t)\in{\K},\ \forall t \geq 0,\  \beta(r,\cdot)\in {\LL},\ \forall r >0\right\}.
\end{array}
\end{equation*}

\section{Problem statement}
\label{sec:problem-statement}

In this paper we consider abstract axiomatically defined time-invariant
and forward complete systems on the state space $X$ which are subject to a shift-invariant set of
disturbances $\Dc$.
\begin{definition}
\label{Steurungssystem}
Consider the triple $\Sigma=(X,\Dc,\phi)$, consisting of 
\begin{enumerate}[(i)]  
	\item A normed linear space $(X,\|\cdot\|_X)$, called the {state space}, endowed with the norm $\|\cdot\|_X$.
	\item A set of  disturbance values $D$, which is a nonempty subset of a normed linear space $S_d$.
	\item A {space of disturbances} $\Dc \subset \{d:\R_+ \to D\}$
          satisfying the following two axioms.
					
\textit{The axiom of shift invariance} states that for all $d \in \Dc$ and all $\tau\geq0$ the time
shift $d(\cdot + \tau)$ is in $\Dc$.

\textit{The axiom of concatenation} is defined by the requirement that for all $d_1,d_2 \in \Dc$ and for all $t>0$ the concatenation of $d_1$ and $d_2$ at time $t$
\begin{equation}
d(\tau) := 
\begin{cases}
d_1(\tau), & \text{ if } \tau \in [0,t], \\ 
d_2(\tau-t),  & \text{ otherwise},
\end{cases}
\label{eq:Composed_Input}
\end{equation}
belongs to $\Dc$.

	\item A transition map $\phi:\R_+ \times X \times \Dc \to X$.
\end{enumerate}
The triple $\Sigma$ is called a (forward complete) dynamical system, if the following properties hold:

\begin{sysnum}
	\item forward completeness: for every $(x,d) \in X \times \Dc$ and
          for all $t \geq 0$ the value $\phi(t,x,d) \in X$ is well-defined.
	\item\label{axiom:Identity} The identity property: for every $(x,d) \in X \times \Dc$
          it holds that $\phi(0, x,d)=x$.
	\item Causality: for every $(t,x,d) \in \R_+ \times X \times
          \Dc$, for every $\tilde{d} \in \Dc$, such that $d(s) =
          \tilde{d}(s)$, $s \in [0,t]$ it holds that $\phi(t,x,d) = \phi(t,x,\tilde{d})$.
	\item \label{axiom:Continuity} Continuity: for each $(x,d) \in X \times \Dc$ the map $t \mapsto \phi(t,x,d)$ is continuous.
		\item \label{axiom:Cocycle} The cocycle property: for all $t,h \geq 0$, for all
                  $x \in X$, $d \in \Dc$ we have
$\phi(h,\phi(t,x,d),d(t+\cdot))=\phi(t+h,x,d)$.
\end{sysnum}
\end{definition}
Here  $\phi(t,x,d)$ denotes the state of a system at the moment $t \in
\R_+$ corresponding to the initial condition $x \in X$ and the disturbance $d \in \Dc$.

\begin{definition}
\label{axiom:Lipschitz}
We say that the flow of $\Sigma=(X,\Dc,\phi)$ is Lipschitz continuous on compact intervals, if 
for any $\tau>0$ and any $r>0$ there exists $L>0$ so that 
\begin{equation}
x,y\in \overline{B_r},\ t \in [0,\tau], d\in\Dc \qrq \|\phi(t,x,d) - \phi(t,y,d) \|_X \leq L \|x-y\|_X.
\label{eq:Flow_is_Lipschitz}
\end{equation}	
\end{definition}

We exploit the following stronger version of forward completeness:
\begin{definition}
\label{Def_RFC}
The system $\Sigma=(X,\Dc,\phi)$ is called robustly forward complete (RFC) 
if for any $C>0$ and any $\tau>0$ it holds that 
\[
\sup\big\{ \|\phi(t,x,d)\|_X \ |\  \|x\|_X\leq C,\: t \in [0,\tau],\: d\in \Dc \big\} < \infty.
\]
\end{definition}

The condition of robust forward completeness is satisfied by large classes of infinite-dimensional
systems. 

\begin{definition}
\label{def:EqPoint}
We call $0 \in X$ an equilibrium point of the system $\Sigma=(X,\Dc,\phi)$, if 
$\phi(t,0,d) = 0$ for all $t \geq 0$ and all $d \in \Dc$.
\end{definition}

\begin{definition}
\label{def:RobustEqPoint}
We call $0 \in X$ a robust equilibrium point of the system
$\Sigma=(X,\Dc,\phi)$, if it is an equilibrium point such that 
for every $\eps >0$ and for any $h>0$ there exists $\delta = \delta
(\eps,h)>0$, so that 
\begin{equation}
\hspace{-7mm} t\in[0,h],\ \|x\|_X \leq \delta, \ d \in \Dc \quad \Rightarrow \quad  \|\phi(t,x,d)\|_X \leq \eps.
\label{eq:RobEqPoint}
\end{equation}
\end{definition}

\begin{example}
\label{exam:Relationships_RFC_REP_etc}
Let $X = D = \R$ and let $\mathcal{D}= L^\infty(\R_+,D)$. The following
examples show the relations between forward completeness, robust
forward completeness and robustness of the equilibrium point.

\begin{enumerate}[(i)]  
	\item $\Sigma$ is RFC, but 0 is not a REP of $\Sigma$: \quad $\dot{x} = |d|(x - x^3).$
\item $\Sigma$ is forward complete, but not RFC and $0$ is not a REP: \quad $\dot{x} = d \cdot x.$
\item $0$ is a REP of $\Sigma$, $\Sigma$ is forward complete, but not RFC:
\[
\dot{x} = \frac{1}{|d|+1} x + d \max\big\{|x|-1,0\big\}.
\]	
\item $0$ is a REP of $\Sigma$ and $\Sigma$ is RFC: \quad $\dot{x} = \frac{1}{|d|+1} x.$
\end{enumerate}

\end{example}

\begin{lemma}
\label{lem:RobustEquilibriumPoint}
Let $\Sigma=(X,\Dc,\phi)$ be a  system with a flow which is Lipschitz continuous on compact intervals.
If $0 \in X$ is an equilibrium point of $\Sigma$, then $0$ is a robust equilibrium point of $\Sigma$.
\end{lemma}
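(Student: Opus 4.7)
The plan is to use the Lipschitz continuity estimate with the second point fixed at the origin, and exploit the fact that $0$ is an equilibrium so that the trajectory starting at $0$ stays at $0$ for all time and all disturbances.

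More concretely, fix $\eps > 0$ and $h > 0$. I would first choose some fixed radius, say $r = 1$, and apply Definition~\ref{axiom:Lipschitz} with $\tau = h$ and this $r$ to obtain a Lipschitz constant $L = L(h, 1) > 0$ such that, for all $x, y \in \overline{B_1}$, all $t \in [0, h]$, and all $d \in \Dc$,
\begin{equation*}
\|\phi(t, x, d) - \phi(t, y, d)\|_X \leq L \|x - y\|_X.
\end{equation*}

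Next I would define $\delta := \min\{1, \eps/L\}$. For any $x \in X$ with $\|x\|_X \leq \delta$ we have $x \in \overline{B_1}$, and $0 \in \overline{B_1}$ trivially, so taking $y = 0$ in the Lipschitz estimate yields, for every $t \in [0, h]$ and $d \in \Dc$,
\begin{equation*}
\|\phi(t, x, d) - \phi(t, 0, d)\|_X \leq L \|x\|_X \leq L \delta \leq \eps.
\end{equation*}
Since $0$ is an equilibrium point, $\phi(t, 0, d) = 0$, so $\|\phi(t, x, d)\|_X \leq \eps$, which is exactly \eqref{eq:RobEqPoint}.

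There is no real obstacle here; the argument is essentially a one-line application of Lipschitz continuity once the equilibrium property is used to anchor the comparison trajectory at $0$. The only subtlety is making sure to pick $\delta$ small enough that the starting point lies in the ball $\overline{B_r}$ on which the Lipschitz constant is defined, which is handled by taking the minimum with $r$.
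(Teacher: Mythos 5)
Your proof is correct and is essentially the same argument the paper gives, just spelled out explicitly: the paper remarks that robustness of the equilibrium amounts to continuity at $x=0$ of $x\mapsto\sup_{t\in[0,h],d\in\Dc}\|\phi(t,x,d)\|_X$, which follows immediately from Lipschitz continuity of the flow together with $\phi(t,0,d)=0$, exactly as you worked out.
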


\begin{proof}
  Clear as the robustness property of an equilibrium is the continuity of the function $\xi:(x,h)\mapsto \sup_{t\in[0,h],\ d\in\Dc}\|\phi(t,x,d)\|_X$ with respect to its first argument at $x=0$.
\end{proof}

In this paper we investigate the following stability properties of equilibria of
abstract systems.

\begin{definition}{}
    \label{d:stability_new}
Consider a system $\Sigma=(X,\Dc,\phi)$ with fixed point $0$. The
equilibrium position $0$ is called
\begin{enumerate}[(i)]
  %\item  Lagrange stable, if there exist $c>0$ and $\sigma \in \Kinf$ so that 
%\[
 %x\in X,\ t \geq 0,\ d\in\Dc \quad\Rightarrow\quad  \|\phi(t,x,d)\|_X \leq \sigma(\|x\|_X) + c.
%\]
  \item (locally) uniformly stable (US), if for every $\eps >0$ there is a $\delta>0$ so that
\begin{eqnarray}
\|x\|_X \leq \delta,\ d\in\Dc,\ t \geq 0 \quad\Rightarrow\quad\|\phi(t,x,d)\|_X \leq \eps.
\label{eq:Uniform_Stability}
\end{eqnarray}
%\item uniformly globally stable, if it is Lagrange stable with $c=0$.
\item uniformly globally asymptotically
stable (UGAS)
if there exists a $\beta \in \KL$ such that for all $x \in X, d\in \Dc, t \geq 0$
\begin{equation}
\label{UGAS_wrt_D_estimate}
\hspace{-1mm}\|\phi(t,x,d)\|_X \leq \beta(\|x\|_X,t).
\end{equation}
\item uniformly (locally) asymptotically stable (UAS)
if there exist a $\beta \in \KL$ and an $r>0$ such that 
the inequality \eqref{UGAS_wrt_D_estimate} holds for all $x \in B_r,d\in \Dc,t \geq 0$.

\item globally weakly attractive, if
\begin{eqnarray}
x\in X,\ d \in\Dc \quad\Rightarrow\quad \inf_{t \geq 0} \|\phi(t,x,d)\|_X = 0.
\label{eq:LIM_inf_form}
\end{eqnarray}

\item \label{def:UniformGlobalWeakAttractivity} uniformly globally weakly attractive, if for every $\eps>0$ and
for every $r>0$ there exists a $\tau = \tau(\eps,r)$ such that 
\begin{eqnarray}
\|x\|_X\leq r,\ d\in\Dc \qrq \exists t = t(x,d,\eps) \leq \tau:\ \|\phi(t,x,d)\|_X \leq \eps.
\label{eq:Uniform_weak2}
\end{eqnarray}
\item \label{def:UniformGlobalAttractivity} uniformly globally attractive (UGATT), if for any $r,\eps >0$ there exists $\tau=\tau(r,\eps)$ so that
\begin{eqnarray}
\|x\|_X \leq r,\ d\in\Dc,\ t \geq \tau(r,\eps) \quad \Rightarrow \quad \|\phi(t,x,d)\|_X \leq \eps.
\label{eq:UAG_with_zero_gain}
\end{eqnarray}
\end{enumerate}
\end{definition}

\begin{remark}
According to Definition~\ref{d:stability_new} the origin is globally weakly attractive  if and only if
for all $x\in X,\ d \in\Dc$ the origin belongs to the $\omega$-limit set $\omega(x,d)$, defined by
\begin{equation}
    \label{eq:eq:weakatt}
    \omega(x,d) := \{ y \in X \;|\; \exists \ t_k \to \infty : \phi(t_k,x,d) \to y \}.
\end{equation}
In this form $0$ is a weak attractor, a notion originally introduced in
\cite{Bha66}, see also \cite{bhatia2002stability} and our nomenclature
follows this reference.

On the other hand, in the language of \cite{SoW96}, weak attractivity is the limit property with zero
gain. Clearly, $0$ is a global weak attractor of $\Sigma$ if and only if
for every $x\in X$, every $d \in \Dc$ and any $\eps>0$ there exists
$t=t(x,d,\eps)$ so that $\|\phi(t,x,d)\|_X \leq \eps$. This justifies the
interpretation of \eqref{def:UniformGlobalWeakAttractivity} as a uniform version of weak attractivity.

 We stress that in some works "weak" stability
    concepts are discussed which are related to convergence properties in
    the weak topology on $X$, e.g. \cite{eisner2007weakly}. This is not the intended meaning here.
\end{remark}

The following characterization of UGAS, \cite[Theorem 2.2]{KaJ11b},
shows that the concept of uniform global attractivity is very 
helpful in the verification of UGAS.

\begin{proposition}
\label{prop:UGAS_Characterization}
Let $\Sigma=(X,\Dc,\phi)$ be a control system and let $0$ be a robust equilibrium point for $\Sigma$.
Then $\Sigmafp$ is UGAS if and only if $\Sigma$ is robustly forward complete and uniformly globally attractive.
\end{proposition}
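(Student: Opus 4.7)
The plan is to establish both directions, treating the forward direction as essentially immediate and concentrating effort on the reverse implication, which contains the substantive content.

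Forward direction. Assume $\Sigma$ is UGAS with $\KL$-bound $\beta$ as in \eqref{UGAS_wrt_D_estimate}. For robust forward completeness, observe that $\|x\|_X\leq C$, $t\in[0,\tau]$, $d\in\Dc$ yield $\|\phi(t,x,d)\|_X\leq\beta(C,0)$, which is finite and independent of $t,d$, so the supremum in Definition~\ref{Def_RFC} is bounded by $\beta(C,0)$. For UGATT, given $r,\eps>0$, since $\beta(r,\cdot)\in\LL$ there is $\tau=\tau(r,\eps)$ with $\beta(r,\tau)\leq\eps$; monotonicity of $\beta$ in the second argument then implies $\|\phi(t,x,d)\|_X\leq\eps$ for all $\|x\|_X\leq r$, $t\geq\tau$, $d\in\Dc$.

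Reverse direction. The first sub-step is to upgrade RFC, UGATT and robustness of the equilibrium to uniform stability. Given $\eps>0$, UGATT with parameters $(r,\eps)=(\eps,\eps)$ produces $T=\tau(\eps,\eps)$ such that $\|x\|_X\leq\eps$ implies $\|\phi(t,x,d)\|_X\leq\eps$ for all $t\geq T$. Applying the robust-equilibrium property to the pair $(\eps,T)$ furnishes $\delta=\delta(\eps,T)>0$ handling the interval $[0,T]$. Setting $\delta_0:=\min\{\delta,\eps\}$ yields the uniform stability estimate \eqref{eq:Uniform_Stability}.

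The second sub-step is to manufacture the $\KL$-estimate. Define the monotone envelope
\[
\mu(r,t):=\sup\bigl\{\|\phi(s,x,d)\|_X : \|x\|_X\leq r,\ s\geq t,\ d\in\Dc\bigr\}.
\]
RFC controls the trajectory on $[0,\tau(r,1)]$ while UGATT bounds it by $1$ afterwards, so $\mu(r,t)<\infty$ for every $r,t\geq 0$; the map $\mu$ is non-decreasing in $r$ and non-increasing in $t$. UGATT gives $\mu(r,t)\to 0$ as $t\to\infty$ for each fixed $r$, while uniform stability (just proved) gives $\mu(r,0)\to 0$ as $r\to 0^+$. The classical majorization result of Sontag (the $\KL$-lemma) then produces a $\KL$-function $\beta$ with $\mu(r,t)\leq\beta(r,t)$, which is exactly \eqref{UGAS_wrt_D_estimate}.

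The main obstacle is this last majorization: a function with the monotonicity and limit properties of $\mu$ need not itself lie in $\KL$ (it may be discontinuous or lack strict monotonicity in $r$), so one must carefully smooth and interpolate to produce a genuine $\KL$-dominating function. This step is technically delicate but well-known and is precisely the content invoked from \cite{KaJ11b}; the rest of the proof is essentially bookkeeping that feeds the three hypotheses into its hypotheses.
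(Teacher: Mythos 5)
Your proof is correct. Note, however, that the paper does not prove this proposition; it simply cites it as \cite[Theorem 2.2]{KaJ11b}, so there is no internal proof to compare against. Your reconstruction is the standard argument and, I expect, close to what is done in that reference: the forward direction reads off RFC and UGATT from the $\KL$-estimate using monotonicity of $\beta$ in each argument, and the reverse direction first combines UGATT (applied with parameters $(\eps,\eps)$) with the robust-equilibrium hypothesis (on the resulting finite horizon $[0,\tau(\eps,\eps)]$) to get uniform stability, and then packages RFC, uniform stability and UGATT into the monotone envelope $\mu(r,t)$, which is finally majorized by a genuine $\KL$ function. The decomposition into ``stability first, then $\KL$-majorization'' is the right one, and you are right that the $\KL$-majorization is where the technical weight lies, since $\mu$ need not be continuous nor strictly increasing in $r$.

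One small imprecision worth flagging: you refer to the majorization step as ``Sontag's $\KL$-lemma,'' but that name (and the Lemma~\ref{Sontags_KL_Lemma} of this paper) refers to the \emph{factoring} result $\beta(r,t)\leq\alpha_2(\alpha_1(r)e^{-t})$ for a function $\beta$ that is \emph{already} in $\KL$. What you actually need here is a different result, namely that a function $\mu:\R_+^2\to\R_+$ that is nondecreasing in $r$, nonincreasing in $t$, finite-valued, with $\mu(0,t)=0$, $\mu(r,0)\to 0$ as $r\to 0^+$, and $\mu(r,t)\to 0$ as $t\to\infty$ for each fixed $r$, can be dominated by some $\beta\in\KL$. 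That monotone-envelope majorization lemma is standard (it appears in Lin--Sontag--Wang and in the Karafyllis--Jiang book) but is a separate statement from the factoring lemma; citing it correctly would tighten the argument. Apart from this attribution issue the proof is sound and complete.
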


It may be surprising at first glance that the characterization of UGAS
does not directly require a stability property, whereas in the usual
context of ODEs it is well known that attractivity on its own does not imply
asymptotic stability. The point to notice here is that uniform
attractivity is a far stronger concept than attractivity as it requires
convergence rates that are uniform for all initial conditions from a ball
around the fixed point. 
 This ensures that uniformly attractive systems, having 0 as a
  robust equilibrium point are uniformly stable. Note however, that
  uniform attractivity without robustness of the equilibrium point does
  not imply stability anymore. This is illustrated by the following example.
\begin{example}
\label{exam:UGATT_FC_not_REP_not_RFC}
Let $\Dc :=L^\infty(\R_+,\R)$, $x,y \in\R$ and consider
\begin{subequations}
\begin{align}
\dot{x}(t) =&\ d(t) x(t) y(t) - x^3(t) - x^{1/3}(t), \\
\dot{y}(t) =&\ - y^3(t) - y^{1/3}(t).
\end{align}
\label{eq:Counterexample}
\end{subequations}
It is easy to see that \eqref{eq:Counterexample} is forward complete and $0$ is its equilibrium.
This system is also UGATT in $x^* = 0$.  To see this, note that there exists a time
$t^* > 0$, so that for any initial condition $y_0$ the solution of the
$y$-subsystem attains the value $0$ in time less or equal than $t^*$.
Hence, $d(t) x(t) y(t) = 0 $ for $t\geq t^*$,  any $d\in\Dc$ and any initial conditions $(x_0,y_0)\in\R^2$.
Consequently, the first component of the solution $\phi_1(t,(x_0,y_0),d)=0$  
for all $t\geq 2t^*$ regardless of the initial condition. In particular,
\eqref{eq:Counterexample} is UGATT in $x^* = 0$.

On the other hand, it is easy to see that for any initial condition
$z_0:=(x_0,y_0)$ with $x_0>0$, $y_0>0$ and for any $K>0$, and any small enough $\tau>0$ one can find $d \in\Dc$ so that $|\phi(\tau,z_0,d)|>K$.
In particular, this shows that zero is not a robust equilibrium of
\eqref{eq:Counterexample} and that \eqref{eq:Counterexample} is not
RFC. The strict analysis of this example is straightforward, and we skip
it. ~ \hfill{} $\square$
\end{example}

\subsection{Characterizations of Robust Forward Completeness}

In this preliminary section we provide useful restatements in terms of the comparison
functions of robust forward completeness and of robust equilibrium points. 

We call
a function $h: \R_+^2 \to \R_+$ increasing, if $(r_1,R_1) \leq (r_2,R_2)$
implies that $h(r_1,R_1) \leq h(r_2,R_2)$, where we use the component-wise
partial order on $\R_+^2$. We call $h$ strictly increasing if $(r_1,R_1)
\leq (r_2,R_2)$ and $(r_1,R_1) \neq (r_2,R_2)$ imply $h(r_1,R_1) <
h(r_2,R_2)$.
\begin{lemma}
\label{lem:RFC_criterion}
Consider a  forward complete system $\Sigma=(X,\Dc,\phi)$. The following statements are equivalent:
\begin{enumerate}
	\item[(i)] $\Sigma$ is robustly forward complete.
	\item[(ii)] there exists a continuous, increasing function $\mu: \R_+^2 \to \R_+$, such that
 \begin{equation}
    \label{eq:8}
    x\in X,\ d\in \Dc,\ t \geq 0 \qrq \| \phi(t,x,d) \|_X \leq \mu( \|x\|_X,t).
\end{equation}	
	\item[(iii)] there exists a continuous function $\mu: \R_+^2 \to \R_+$ such that the implication \eqref{eq:8} holds.
\end{enumerate}
\end{lemma}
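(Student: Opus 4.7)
The plan is to prove the cycle (i) $\Rightarrow$ (ii) $\Rightarrow$ (iii) $\Rightarrow$ (i), noting that (ii) $\Rightarrow$ (iii) is immediate since an increasing continuous function is, in particular, continuous.

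For (iii) $\Rightarrow$ (i): given any $C>0$ and $\tau>0$, the set $[0,C]\times[0,\tau]$ is compact, so the continuous function $\mu$ is bounded on it by some constant $M$. Combining this with \eqref{eq:8} immediately yields $\sup\{\|\phi(t,x,d)\|_X : \|x\|_X\leq C,\ t\in[0,\tau],\ d\in\Dc\}\leq M<\infty$, which is the definition of RFC.

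For the main implication (i) $\Rightarrow$ (ii), I would first define the natural candidate
\begin{equation*}
\tilde\mu(r,t) := \sup\bigl\{\|\phi(s,x,d)\|_X \ |\ \|x\|_X\leq r,\ s\in[0,t],\ d\in\Dc\bigr\}.
\end{equation*}
By the RFC assumption, $\tilde\mu(r,t)<\infty$ for every $(r,t)\in\R_+^2$. From the definitions it is clear that $\tilde\mu$ is increasing in both arguments (enlarging the ball or the time interval enlarges the sup), and that $\|\phi(t,x,d)\|_X\leq\tilde\mu(\|x\|_X,t)$ for all admissible $(t,x,d)$. However, $\tilde\mu$ need not be continuous, so the last step is to dominate it by a continuous increasing function. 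The standard smoothing trick is to set
\begin{equation*}
\mu(r,t) := \int_{r}^{r+1}\int_{t}^{t+1} \tilde\mu(s,\sigma)\,d\sigma\,ds + \tilde\mu(r,t).
\end{equation*}
Since $\tilde\mu$ is monotone, it is Borel measurable and locally bounded, so the integral is well-defined and continuous in $(r,t)$ by dominated convergence, while monotonicity of $\tilde\mu$ keeps $\mu$ increasing. Because $\tilde\mu(s,\sigma)\geq\tilde\mu(r,t)$ on the domain of integration, we still have $\mu(r,t)\geq\tilde\mu(r,t)$, and hence \eqref{eq:8} holds with $\mu$ in place of $\tilde\mu$.

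The only subtle point is the continuity-by-smoothing step: the sup-defined $\tilde\mu$ could in principle have countably many jumps along either variable, but monotonicity on $\R_+^2$ guarantees it is bounded on bounded sets and measurable, which is all that is needed for the integral averaging to produce a continuous, increasing dominator. Everything else is routine bookkeeping using the definition of RFC.
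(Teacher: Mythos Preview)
Your overall strategy---define the supremum $\tilde\mu$ and then smooth it by integral averaging---is the same as the paper's, and in fact your choice of the unit box $[r,r+1]\times[t,t+1]$ is cleaner than the paper's construction, which integrates over $[C,2C]\times[\tau,2\tau]$, rescales by $\frac{1}{C\tau}$, and then needs an auxiliary function $\nu$ to patch continuity at the boundary $C=0$ or $\tau=0$.

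However, there is a genuine slip in your definition
\[
\mu(r,t) := \int_{r}^{r+1}\int_{t}^{t+1} \tilde\mu(s,\sigma)\,d\sigma\,ds + \tilde\mu(r,t).
\]
The integral term is indeed continuous, but the added $\tilde\mu(r,t)$ is in general not, so $\mu$ as written need not be continuous. The fix is simply to drop that extra term: the integral alone already dominates $\tilde\mu(r,t)$ (since $\tilde\mu(s,\sigma)\geq\tilde\mu(r,t)$ on the domain of integration, which has area $1$), and monotonicity of the integral follows from monotonicity of $\tilde\mu$ via the substitution $(s,\sigma)\mapsto(s+\Delta r,\sigma+\Delta t)$. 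With that correction your argument goes through and is somewhat simpler than the paper's.
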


\begin{proof}
(i) $\Rightarrow$ (ii). Let $\Sigma$ be robustly forward complete. Define $\tilde\mu: \R_+^2 \to \R_+$ by
\begin{eqnarray}
\label{eq:tildechi-def}
\tilde\mu(C,\tau) := \sup\{ \|\phi(t,x,d)\|_X
\ |\  \|x\|_X\leq C,\: t \in [0,\tau],\: d\in \Dc \}
\end{eqnarray}
which is well-defined due to the robust forward completeness of $\Sigma$.
Clearly, $\tilde\mu$ is increasing by definition. In particular, it is
locally integrable. 

Now define $\hat\mu: (0,+\infty)^2 \to \R_+$ by setting for $C>0$ and $\tau>0$ 
\begin{eqnarray}
\label{eq:chi-def}
\hat\mu(C,\tau) := \frac{1}{C\tau} \int_C^{2C}\int_{\tau}^{2\tau} \tilde\mu(r,s) drds + C\tau.
\end{eqnarray}
By construction, $\hat\mu$ is strictly increasing and continuous on $(0,+\infty)\times(0,+\infty)$.
We can also enlarge the domain of definition of $\hat\mu$ to all of $\R^2_+$
using monotonicity. To this end we define for $\tau>0$: $\hat\mu(0,\tau) := \lim_{C\to +0}
\hat\mu(C,\tau)$ and for $C\geq 0$ we define $\hat\mu(C,0) := \lim_{\tau\to +0}
\hat\mu(C,\tau)$. This is well-defined as $\hat\mu$ is increasing on
$(0,+\infty)^2$ and we obtain that the resulting function is increasing on
$\R_+^2$. Note that the construction does not guarantee that $\hat\mu$ is
continuous. In order to obtain continuity choose a continuous, strictly
increasing function $\nu: \R_+ \to \R_+$ with $\nu(r) > \max\{ \hat\mu(0,r),\hat\mu(r,0)\}$, $r \geq 0$ and
define 
for $(C,\tau) \geq (0,0)$
\begin{eqnarray} \quad
\mu(C,\tau) := 
      \max\big\{
  \nu\big(\max\{C,\tau\}\big),\hat\mu(C,\tau) \big\} + C\tau.
\label{eq:mu_def}
\end{eqnarray}
It is easy to see that $\mu$ is continuous as $\mu(C,\tau) =
\nu\big(\max\{C,\tau\}\big) + C\tau$ whenever $C$ or $\tau$ is small
enough.
At the same time we have for $C>0,\tau >0$ that 
\begin{eqnarray*}
\mu(C,\tau) \geq \hat\mu(C,\tau) \geq \frac{1}{C\tau}
\int_C^{2C}\int_{\tau}^{2\tau}  drds \ \tilde\mu(C,\tau) + C\tau \geq \tilde\mu(C,\tau).
\end{eqnarray*}
This implies that (ii) holds with this $\mu$.

(ii) $\Rightarrow$ (iii) is evident.

(iii) $\Rightarrow$ (i) follows due to continuity of $\mu$.
\end{proof}

\begin{remark}
\label{rem:Lemma_Karafyllis}
  Lemma~\ref{lem:RFC_criterion} follows from \cite[Lemma 3.5]{Kar04}, but we included the proof of Lemma~\ref{lem:RFC_criterion} since it is used 
	for the proof of Proposition~\ref{prop:RFC_and_0-REP_criterion}.
\end{remark}

\begin{lemma}
\label{lem:0-REP_criterion}
Consider a forward complete system $\Sigma=(X,\Dc,\phi)$. The following statements are equivalent:
\begin{enumerate}
	\item[(i)] 0 is a robust equilibrium point of $\Sigma$.
	\item[(ii)] For any $\tau \geq 0$ there exists a $\delta=\delta(\tau)>0$
           such
          that the function 
$\tilde\mu(\cdot,\tau):[0,\delta) \to \R_+$
defined by
\begin{equation}
\tilde\mu(r,\tau) :=
\sup\{ \|\phi(t,x,d)\|_X \ |\  \|x\|_X\leq r,\: t \in [0,\tau],\: d\in \Dc \}\,,\quad r \in [0,\delta)
\label{eq:omega_h}
\end{equation}	
is continuous at $r=0$ with $\tilde\mu(0,\tau)=0$.
\end{enumerate}
\end{lemma}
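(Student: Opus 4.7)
The plan is to prove both implications by directly unpacking the definitions, exploiting the fact that $\tilde\mu(\cdot,\tau)$ is monotonically nondecreasing in its first argument (which follows immediately from taking supremum over larger sets).

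For (i) $\Rightarrow$ (ii), I fix $\tau \geq 0$ and first invoke the REP property with $\eps=1$, $h=\tau$ to obtain a $\delta(\tau) := \delta(1,\tau) > 0$ such that $\|\phi(t,x,d)\|_X \leq 1$ whenever $\|x\|_X \leq \delta(\tau)$, $t\in[0,\tau]$, $d\in\Dc$. This shows $\tilde\mu(r,\tau) \leq 1 < \infty$ for $r \in [0,\delta(\tau))$, so the function is well-defined. Since $0$ is an equilibrium, $\tilde\mu(0,\tau)=0$. For continuity at $r=0$, given arbitrary $\eps>0$ I apply REP again with $h=\tau$ to obtain $\delta' = \delta(\eps,\tau) > 0$ with $\tilde\mu(r,\tau) \leq \eps$ for $r \in [0,\min\{\delta',\delta(\tau)\})$, which is exactly continuity of $\tilde\mu(\cdot,\tau)$ at $r=0$.

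For (ii) $\Rightarrow$ (i), I first establish that $0$ is an equilibrium: for any $\tau > 0$ the assumption $\tilde\mu(0,\tau)=0$ forces $\|\phi(t,0,d)\|_X = 0$ for all $t\in[0,\tau]$ and $d\in\Dc$; letting $\tau$ vary yields $\phi(t,0,d)=0$ for all $t\geq 0$ and $d\in\Dc$. To verify the REP estimate for given $\eps>0$ and $h>0$, I apply (ii) with $\tau = h$ and use the continuity of $\tilde\mu(\cdot,h)$ at $0$ with $\tilde\mu(0,h)=0$ to find $\delta = \delta(\eps,h) \in (0,\delta(h))$ with $\tilde\mu(\delta,h) \leq \eps$. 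Monotonicity of $\tilde\mu$ in the first argument then gives, for $\|x\|_X \leq \delta$, $t\in[0,h]$, $d\in\Dc$, the chain $\|\phi(t,x,d)\|_X \leq \tilde\mu(\|x\|_X,h) \leq \tilde\mu(\delta,h) \leq \eps$, which is precisely the defining inequality of a robust equilibrium point.

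There is no real obstacle here; the statement essentially rephrases the REP property as a continuity-at-zero property of the supremal reachability function. The only minor technical point is ensuring $\tilde\mu(\cdot,\tau)$ is finite on some initial interval $[0,\delta(\tau))$ before one can speak of its continuity at $0$, which is handled by one preliminary application of REP with a fixed value of $\eps$ (e.g. $\eps=1$).
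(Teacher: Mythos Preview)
Your proof is correct and follows essentially the same approach as the paper: both directions rely on the monotonicity of $\tilde\mu(\cdot,\tau)$ and translate directly between the $\eps$--$\delta$ definition of REP and continuity of $\tilde\mu(\cdot,\tau)$ at $r=0$. Two minor differences worth noting: for (i)~$\Rightarrow$~(ii) you argue directly while the paper argues by contradiction (extracting sequences that violate REP), and in (ii)~$\Rightarrow$~(i) you explicitly verify that $0$ is an equilibrium from $\tilde\mu(0,\tau)=0$, a detail the paper's proof takes for granted.
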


\begin{proof}
    (i) $\Rightarrow$ (ii).  Let $0$ be a robust equilibrium point of
    $\Sigma$. 
 For $\tau=0$ we have $\tilde\mu(r,0)=r$ by ($\Sigma$2) and there is
   nothing to show. By the assumption, for any $\tau>0$ the function $\tilde\mu(\cdot,\tau)$ defined by
    \eqref{eq:omega_h} is well-defined
    on an interval $[0,\delta(\tau))$ with $\delta(\tau)>0$. In addition,
    $\tilde\mu(0,\tau)=0$ for any $\tau>0$.
Assume (ii) does not
hold. Then there exists a $\tau^*>0$ so that the corresponding function 
$\tilde\mu(\cdot,\tau^*)$,  is not continuous at zero.

According to the definition, $\tilde\mu(\cdot,\tau^*)$ is nondecreasing on its domain
of definition. Therefore $a:=\lim_{r \to +0}\tilde\mu(r,\tau^*)$ exists.  Then
there exist sequences $\{t_k\}_{k\geq 1} \subset [0,\tau]$, $\{x_k\}_{k\geq
  1} \subset X$ with  $\|x_k\|_X \to 0$ as $k \to \infty$, and
$\{d_k\}_{k\geq 1} \subset \Dc$  so that
\[
\|\phi(t_k,x_k,d_k)\|_X \geq \frac{a}{2} \quad \forall k \geq 1.
\]
This contradicts the fact that $0$ is a robust equilibrium point of $\Sigma$.

(ii) $\Rightarrow$ (i).
Pick any $\tau>0$ and any $\eps>0$. Due to the continuity of
  $\tilde\mu(\cdot,\tau)$ at $r=0$ there
exists a $\delta>0$ so that 
\[
\sup\big\{ \|\phi(t,x,d)\|_X
\ |\  \|x\|_X\leq \delta,\: t \in [0,\tau],\: d\in \Dc \big\} <\eps,
\]
which shows that $0$ is a robust equilibrium point of $\Sigma$.
\end{proof}

\begin{proposition}
\label{prop:RFC_and_0-REP_criterion}
Consider a forward complete system $\Sigma=(X,\Dc,\phi)$. The following statements are equivalent:
\begin{enumerate}
	\item[(i)] $\Sigma$ is robustly forward complete and $0$ is a robust equilibrium point of $\Sigma$.
	\item[(ii)] there exists a continuous, radially unbounded function $\mu: \R_+^2 \to
          \R_+$ such that 
	$\mu(\cdot,h) \in\Kinf$ for all $h \geq 0$ and the implication \eqref{eq:8} holds.
	\item[(iii)] there exist $\sigma \in \Kinf$ and a continuous function $\chi: \R_+^2 \to
    \R_+$ such that $\chi(r,\cdot) \in \K$ for all $r > 0$, $\chi(0,t)=0$ for all $t\in \R_+$ and
such that for all $x\in X, d\in \mathcal{D}$ and all $t \geq 0$ we have   
\begin{equation}
    \label{eq:RFC_and_REP}
    \| \phi(t,x,d) \|_X \leq \sigma(\|x\|_X) + \chi( \|x\|_X,t).
\end{equation}	
\end{enumerate}
\end{proposition}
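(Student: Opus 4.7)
The plan is to prove the cycle (i) $\Rightarrow$ (ii) $\Rightarrow$ (iii) $\Rightarrow$ (i), building on Lemmas~\ref{lem:RFC_criterion} and~\ref{lem:0-REP_criterion}.

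For (i) $\Rightarrow$ (ii), I would refine the mollification from the proof of Lemma~\ref{lem:RFC_criterion}. Starting with $\tilde\mu$ as in \eqref{eq:tildechi-def} (well-defined and increasing by RFC), form the averaged majorant $\hat\mu$ of \eqref{eq:chi-def}. The new ingredient is that Lemma~\ref{lem:0-REP_criterion} supplies $\tilde\mu(0,\tau)=0$ together with continuity of $\tilde\mu(\cdot,\tau)$ at $0$ for every $\tau$. Together with the monotonicity estimate $\frac{1}{C\tau}\int_C^{2C}\!\!\int_\tau^{2\tau}\tilde\mu(r,s)\,dr\,ds\leq \tilde\mu(2C,2\tau)$, this forces $\hat\mu(C,\tau)\to 0$ as $C\to 0^+$ for each fixed $\tau\geq 0$; the monotone extension of $\hat\mu$ to $\R_+^2$ therefore vanishes on the axis $\{C=0\}$. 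Setting for example $\mu(C,\tau):=\hat\mu(C,\tau)+C$ preserves continuity, keeps $\mu(0,\tau)=0$, and yields strict monotonicity and unboundedness in $C$ for each fixed $\tau$, so $\mu(\cdot,\tau)\in\Kinf$. The auxiliary $\nu$-correction used in Lemma~\ref{lem:RFC_criterion} must be modified so that it too vanishes at $C=0$, e.g.\ by using $C\cdot\nu(\max\{C,\tau\})$ instead of $\nu(\max\{C,\tau\})$.

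For (ii) $\Rightarrow$ (iii), set $\sigma(r):=\mu(r,0)\in\Kinf$ and define
\[
\chi(r,t) := \max_{s\in[0,t]}\mu(r,s)\,-\,\mu(r,0)\,+\,r t.
\]
Since $\mu$ is continuous with $\mu(0,s)\equiv 0$, $\chi$ is continuous on $\R_+^2$, satisfies $\chi(0,t)=\chi(r,0)=0$, and the additive $rt$-term makes $\chi(r,\cdot)$ strictly increasing for every $r>0$, hence in $\K$. From the chain $\mu(r,t)\leq \max_{s\in[0,t]}\mu(r,s)=\sigma(r)+\chi(r,t)-rt\leq \sigma(r)+\chi(r,t)$ combined with \eqref{eq:8}, the estimate \eqref{eq:RFC_and_REP} follows.

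For (iii) $\Rightarrow$ (i), RFC is immediate: for $\|x\|_X\leq C$ and $t\in[0,\tau]$ we have $\|\phi(t,x,d)\|_X\leq \sigma(C)+\max_{s\in[0,\tau]}\chi(C,s)<\infty$ uniformly in $d\in\Dc$. To check that $0$ is a robust equilibrium, fix $h>0$ and observe that $r\mapsto \sigma(r)+\max_{t\in[0,h]}\chi(r,t)$ is continuous on $\R_+$ (a maximum of a continuous function over the compact set $[0,h]$) and vanishes at $r=0$; hence for any $\eps>0$ one can choose $\delta>0$ with this expression bounded by $\eps$ on $[0,\delta]$, yielding \eqref{eq:RobEqPoint}.

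The principal obstacle is the first implication: the mollification in Lemma~\ref{lem:RFC_criterion} only guarantees continuity and monotonicity of the majorant, not that it vanishes on the axis $\{C=0\}$. One has to verify carefully that REP genuinely propagates through the double integral average (including joint continuity of $\tilde\mu$ at points of the form $(0,\tau)$) and that the auxiliary correction used to restore continuity at the axes can be chosen so that the resulting $\mu$ belongs to $\Kinf$ in its first argument. The remaining two implications are essentially direct manipulations of comparison functions.
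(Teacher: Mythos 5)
Your proposal is correct and follows essentially the same route as the paper: the cyclic scheme (i)$\Rightarrow$(ii)$\Rightarrow$(iii)$\Rightarrow$(i), built on Lemmas~\ref{lem:RFC_criterion} and~\ref{lem:0-REP_criterion}, with the key observation that the REP property propagates through the averaging in \eqref{eq:chi-def} via the monotonicity bound $\hat\mu(C,\tau)\leq\tilde\mu(2C,2\tau)+C\tau$, forcing $\hat\mu(0,\tau)=0$. Two details differ. First, the paper fixes the $\nu$-correction by replacing $\nu(\max\{C,\tau\})$ with $\nu(C)$ and choosing $\nu\in\K$ (hence $\nu(0)=0$); your proposal $C\,\nu(\max\{C,\tau\})$ also vanishes at $C=0$, but domination of $\hat\mu$ near points $(C_0,0)$ then requires $\nu(C_0)>\hat\mu(C_0,0)/C_0$, and since $\hat\mu(r,0)/r$ need not be bounded as $r\to 0^+$ this can conflict with $\nu$ being increasing — so the paper's $\nu(C)$ with the bound $\nu(r)>\hat\mu(r,0)$ is the cleaner choice. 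Second, for (ii)$\Rightarrow$(iii) the paper simply splits $\mu(r,t)=\mu(r,0)+(\mu(r,t)-\mu(r,0))$ and declares $\chi(r,t):=\mu(r,t)-\mu(r,0)$; as you implicitly noticed, statement (ii) does not assert that $\mu$ is nondecreasing in $t$, so this $\chi$ is not a priori nonnegative or in $\K$. Your construction $\chi(r,t):=\max_{s\in[0,t]}\mu(r,s)-\mu(r,0)+rt$ closes this small gap while still satisfying $\chi(0,\cdot)\equiv 0$ and $\chi(r,\cdot)\in\K$, at the cost of only a slightly weaker (but still sufficient) inequality. The (iii)$\Rightarrow$(i) direction coincides with the paper's.
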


\begin{proof}
    (i) $\Rightarrow$ (ii).  Since $\Sigma$ is RFC,
    Lemma~\ref{lem:RFC_criterion} implies that there exists a continuous,
    increasing function $\mu: \R_+^2 \to \R_+$ such that the implication \eqref{eq:8} holds.  It
    remains to show that we may choose $\mu$ such that $\mu(0,h) = 0$ for
    all $h \geq 0$.

Consider the function $\tilde\mu: \R_+^2 \to \R_+$ defined by \eqref{eq:tildechi-def}.
Since $0$ is a robust equilibrium point of $\Sigma$, 
we have by Lemma~\ref{lem:0-REP_criterion} for all
  $\tau \geq 0$ that 
  $\tilde\mu(0,\tau)=0$ and $r=0$ is a point of continuity of $\tilde\mu(\cdot,\tau)$.
For $\hat\mu: \R_+^2 \to \R_+$ defined by \eqref{eq:chi-def}
it thus holds for all $C>0,h>0$ that
\[
\hat\mu(C,h) \leq \tilde\mu(2C,2h) + Ch \to 0, \text{ as } C \to +0,
\]
which shows that $\hat\mu(0,h)= 0$ for every $h\geq 0$  and 
$r=0$ is a point of continuity of $\hat\mu(\cdot,h)$ for any $h\geq 0$.

Consider $\nu: \R_+ \to \R_+$ from the proof of
Lemma~\ref{lem:RFC_criterion}, which was chosen such that $\nu(r) > \max\{
\hat\mu(0,r),\hat\mu(r,0)\} = \hat\mu(r,0)$ for all $r>0$. 
  Since, by Lemma~\ref{lem:0-REP_criterion},  
$\hat\mu(r,0) \to 0$ as $r\to +0$, we may choose $\nu$ with $\nu(0)=0$, and thus $\nu\in\K$.

Now consider the following function $\mu: \R_+^2 \to \R_+$ (which is different from that of \eqref{eq:mu_def}):
\begin{eqnarray} \quad
\mu(C,\tau) := 
      \max\big\{
  \nu\big(C\big),\hat\mu(C,\tau) \big\} + C\tau.
\label{eq:mu_def_2}
\end{eqnarray}
Since $\nu$ is continuous on $\R_+$ and $\hat\mu$ is continuous over $[0,+\infty)\times (0,+\infty)$, 
$\mu$ is also continuous over $[0,+\infty)\times (0,+\infty)$.
At the same time for $\tau$ small enough $\mu(C,\tau)=\nu(C) + C\tau$, and
again it is not hard to show that $\mu$ is continuous over $\R^2_+$.
Moreover, it is clear that $\mu$ satisfies item (ii) of Lemma~\ref{lem:RFC_criterion}.

It follows from the definition that $\mu(0,h)=0$. Since $\hat\mu(C,\tau)
\geq C$ for any $C\geq 0$ and $\tau\geq 0$ (due to the identity axiom
($\Sigma$\ref{axiom:Identity})), and since both $\nu$ and $\hat\mu$ are increasing, $\mu(\cdot,h)\in\Kinf$ for any $h>0$.

(ii) $\Rightarrow$ (iii).
Let $\mu$ as in (ii) exist. Then for all $r \in \R_+$ and all $t \in \R_+$ we have that
\[
\mu(r,t) = \underbrace{\mu(r,0)}_{\sigma(r)} + \underbrace{\mu(r,t) - \mu(r,0)}_{\chi(r,t)}.
\]
Now (iii) is satisfied with these $\sigma$ and $\chi$.

(iii) $\Rightarrow$ (i). Let $\sigma$ and $\mu$ be as in (iii). 
Then $\phi(t,0,d) = 0$ for all $t\geq 0$ and $d\in \Dc$. Now pick any $\eps>0$ and any $\tau >0$. Set $\delta_1:=\sigma^{-1}(\frac{\eps}{2})$ and choose $\delta_2$ so that 
\[
\sup_{0\leq r \leq \delta_2,\ 0\leq t \leq \tau} \chi(r,t) = \sup_{0\leq r \leq \delta_2} \chi(r,\tau) \leq \frac{\eps}{2}.
\]
Now define $\delta:=\min\{\delta_1,\delta_2\}$.
Then
\[
t\in[0,\tau],\ \|x\|_X\leq \delta,\ d\in\Dc \qrq \|\phi(t,x,d)\|_X \leq \eps,
\]
which shows that 0 is a robust equilibrium point of $\Sigma$.

Obviously, (iii) implies robust forward completeness of $\Sigma$.
\end{proof}

\begin{remark}
In order to ensure the existence of $\sigma \in \Kinf$ and $\chi$ as in
item (iii) of Proposition~\ref{prop:RFC_and_0-REP_criterion}, it is not
sufficient to assume that $\Sigma$ is RFC and $0$ is an equilibrium
point. Indeed, for the system from item (i) in Example~\ref{exam:Relationships_RFC_REP_etc}
the function $\tilde\mu$ from \eqref{eq:tildechi-def} can be computed for all 
$C>0$ and $\tau>0$ as $\tilde\mu(C,\tau) = \max\{1,C\}$ and for all $C\geq 0$, $\tau \geq 0$ as $\tilde\mu(C,0) = C$, $\tilde\mu(0,\tau) = 0$. Clearly, one cannot 
majorize this function by any functions $\sigma \in \Kinf$ and $\chi$ as in item (iii) of Proposition~\ref{prop:RFC_and_0-REP_criterion}.
\end{remark}

\begin{remark}
Setting $t:=0$ in \eqref{eq:RFC_and_REP} and using the identity axiom ($\Sigma$\ref{axiom:Identity}) we see that for 
$\sigma\in\Kinf$ in \eqref{eq:RFC_and_REP} it holds that $\sigma(r)\geq r$ for all $r\geq0$. For some systems it is possible to choose  $\sigma(r) := r$ for all $r \in\R_+$, but in general such a choice is not possible.
Consider a linear system  $\dot{x} = Ax$, where $A$ is the generator of a $C_0$-semigroup $T(\cdot)$ over $X$, 
satisfying $\lim_{t\to+0}\|T(t)\|_{\mathcal{L}(X)} >1$ (there are many examples of such semigroups).
Then there exist a sequence $\{x_k\} \subset X$: $\|x_k\|_X =1$ for all $k \in\N$ and a corresponding sequence of times $t_k \to +0$ as $k\to\infty$ so that $\|T(t_k)x_k\|_X>1+\eps$ for some $\eps>0$. Hence we have
\[
1+\eps < \lim_{k\to \infty} \|T(t_k)x_k\|_X  \leq \lim_{k\to \infty} \big( \sigma(\|x_k\|_X) + \chi(\|x_k\|_X,t_k) \big) =\sigma(1).
\]
\end{remark}

\section{Non-coercive Lyapunov theorems}
\label{sec:nonc-lyap-theor}

Lyapunov functions provide a predominant tool to study UGAS. In our
context they are defined as follows. Recall that for a continuous function
$h: \R \to \R$ the (right-hand lower) Dini derivative at a point $t$ is
defined by, see \cite{szarski1965differential},
\begin{equation}
    \label{eq:4}
    D_+h(t):=\mathop{\underline{\lim}} \limits_{\tau \rightarrow +0} {\frac{1}{\tau}\big(h(t+\tau)-h(t)\big) }.
\end{equation}

Consider a system $\Sigma= (X,\Dc,\phi)$ and let $V:X \to\R$ be continuous.
Given $x\in X,d\in \mathcal{D}$, we consider the (right-hand lower) Dini
derivative of the continuous function $t \mapsto V(\phi(t,x,d))$ at $t=0$:
\begin{equation}
\label{UGAS_wrt_D_LyapAbleitung}
\dot{V}_d(x):=\mathop{\underline{\lim}} \limits_{t \rightarrow +0} {\frac{1}{t}\big(V(\phi(t,x,d))-V(x)\big) }.
\end{equation}
We call this the Dini derivative of $V$ along the trajectories of $\Sigma$.

\begin{definition}
\label{def:UGAS_LF_With_Disturbances}
A continuous function $V:X \to \R_+$ is called a \textit{Lyapunov function} for system $\Sigma=(X,\Dc,\phi)$,  if there exist
$\psi_1,\psi_2 \in \Kinf$ and $\alpha \in \K$
such that 
\begin{equation}
\label{LyapFunk_1Eig_UGAS}
\psi_1(\|x\|_X) \leq V(x) \leq \psi_2(\|x\|_X) \quad \forall x \in X
\end{equation}
holds 
and the Dini derivative of $V$  along the trajectories of $\Sigma$ satisfies 
\begin{equation}
\label{DissipationIneq_UGAS_With_Disturbances}
\dot{V}_d(x) \leq -\alpha(\|x\|_X)
\end{equation}
for all $x \in X$ and all $d \in \Dc$. 
We call $V$ a \textit{non-coercive Lyapunov function}, if instead of
\eqref{LyapFunk_1Eig_UGAS} we have $V(0)=0$ and
\begin{equation}
    \label{eq:1}
    0 < V(x) \leq \psi_2(\|x\|_X) \quad \forall x \in X \backslash\{0\}.
\end{equation}
\end{definition}

If we want to emphasize that \eqref{LyapFunk_1Eig_UGAS} holds we will also
speak of a coercive Lyapunov function. 
The following result is well-known:
\begin{proposition}
\label{Direct_LT_0-UGAS_maxType}
Let $\Sigma=(X,\Dc,\phi)$ be a dynamical system. If there exists a coercive Lyapunov function for $\Sigma$, then $\Sigmafp$ is UGAS.
\end{proposition}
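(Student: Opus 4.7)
The plan is to reduce everything to a scalar comparison argument along trajectories and then extract a $\KL$-bound.

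First, I would fix $(x,d) \in X \times \Dc$ and study the scalar function $v(t) := V(\phi(t,x,d))$, which is continuous by axiom ($\Sigma$\ref{axiom:Continuity}) and continuity of $V$. Using the cocycle property ($\Sigma$\ref{axiom:Cocycle}), for any $t \geq 0$ and $\tau > 0$,
\[
\tfrac{1}{\tau}\bigl(v(t+\tau) - v(t)\bigr) = \tfrac{1}{\tau}\bigl(V(\phi(\tau,\phi(t,x,d),d(t+\cdot))) - V(\phi(t,x,d))\bigr),
\]
so taking the lower limit as $\tau \to +0$ and applying \eqref{DissipationIneq_UGAS_With_Disturbances} to the shifted disturbance $d(t+\cdot) \in \Dc$ (permitted by shift invariance) yields
\[
D_+ v(t) \leq -\alpha(\|\phi(t,x,d)\|_X).
\]

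Second, I would use coercivity to turn this into an autonomous dissipation inequality for $v$. From \eqref{LyapFunk_1Eig_UGAS} we have $\|\phi(t,x,d)\|_X \geq \psi_2^{-1}(v(t))$, hence
\[
D_+ v(t) \leq -\tilde\alpha(v(t)), \qquad \tilde\alpha := \alpha \circ \psi_2^{-1} \in \K.
\]
Now I would invoke the standard scalar comparison principle for Dini derivatives (e.g.\ Sontag's $\KL$-lemma combined with the elementary comparison result for scalar continuous functions satisfying $D_+ v \le -\tilde\alpha(v)$): there exists $\beta_1 \in \KL$, depending only on $\tilde\alpha$, such that $v(t) \leq \beta_1(v(0),t)$ for all $t \geq 0$.

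Third, I would translate this bound back to the state norm. Combining $\psi_1(\|\phi(t,x,d)\|_X) \leq v(t)$ with $v(0) = V(x) \leq \psi_2(\|x\|_X)$ gives
\[
\|\phi(t,x,d)\|_X \leq \psi_1^{-1}\bigl(\beta_1(\psi_2(\|x\|_X), t)\bigr) =: \beta(\|x\|_X, t),
\]
and $\beta \in \KL$ since $\psi_1^{-1}, \psi_2 \in \Kinf$ and $\beta_1 \in \KL$. Because the bound holds uniformly in $d \in \Dc$, this is exactly UGAS.

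The main obstacle is the scalar comparison step: one must justify that $D_+ v(t) \leq -\tilde\alpha(v(t))$ for a merely continuous $v$ implies a $\KL$-bound, rather than assuming differentiability. I would handle this by citing the classical Dini-derivative comparison lemma (as in \cite{szarski1965differential}) or, alternatively, by invoking Sontag's $\KL$-lemma: the decay rate for $v$ depends only on $\tilde\alpha$, so the argument is identical to the finite-dimensional one and does not rely on the Banach-space structure of $X$ beyond what was already used to define $v$.
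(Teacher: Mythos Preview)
Your proposal is correct and follows essentially the same route as the paper. The paper does not write out a full proof but refers to the finite-dimensional argument in \cite[p.~160]{LSW96}, noting that since $v(t)=V(\phi(t,x,d))$ is merely continuous one must replace the standard comparison principle by the Dini-derivative comparison lemma (stated immediately afterwards as Lemma~\ref{thm:ComparisonPrinciple}, from \cite{szarski1965differential}); your three steps---cocycle property plus shift invariance to obtain $D_+v(t)\le -\alpha(\|\phi(t,x,d)\|_X)$, coercivity to reduce to $D_+v\le -\tilde\alpha(v)$, and the scalar comparison lemma to extract the $\KL$-bound---are exactly this argument spelled out.
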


The proof of Proposition~\ref{Direct_LT_0-UGAS_maxType}  is analogous to the proof of its finite-dimensional counterpart, see \cite[p. 160]{LSW96}. Note however, that 
we use continuous Lyapunov functions and the trajectories of the system $\Sigma$ are merely continuous,
therefore we cannot use the standard comparison principle, see \cite[Lemma 4.4]{LSW96} in the proof of 
Proposition~\ref{Direct_LT_0-UGAS_maxType}. Instead one can exploit the
following generalized comparison principle from \cite[Lemma 6.1]{szarski1965differential}, \cite[Lemma 1]{MiI16}:
\begin{lemma}
\label{thm:ComparisonPrinciple}
Let $\alpha \in \mathcal{P}$  and consider the differential inequality
\begin{equation}
\dot{y}(t) \leq -\alpha(y(t))\,, \quad t>0.
\label{eq:ComparisonPrinciple}
\end{equation}
There exists a $\beta \in \KL$ so that for all continuous functions $y:
\R_+ \to \R_+$ satisfying \eqref{eq:ComparisonPrinciple} in the sense of
the Dini
derivative (defined in \eqref{UGAS_wrt_D_LyapAbleitung}) we have
\begin{equation}
y(t) \leq \beta(y(0),t) \quad \forall t \geq 0.
\label{eq:ComparisonPrinciple_FinalEstimate}
\end{equation}
\end{lemma}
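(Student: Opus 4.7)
The plan is to prove the comparison principle in four stages: monotonicity of $y$, pointwise convergence to zero, uniform time bounds depending only on $y(0)$, and finally extraction of a $\KL$-function from these uniform bounds. I would work directly with the Dini-derivative inequality throughout, without passing to classical solutions of the associated ODE (which may not exist since $\alpha$ need not be Lipschitz and $y$ need not be differentiable).

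First, I would establish that any continuous $y:\R_+ \to \R_+$ satisfying \eqref{eq:ComparisonPrinciple} in the Dini sense is monotonically nonincreasing. Since $\alpha \in \mathcal{P}$ is nonnegative, the inequality yields $D_+ y(t) \leq 0$ for all $t$, and the classical fact from \cite{szarski1965differential} that a continuous function with nonpositive lower right Dini derivative is nonincreasing applies. The same reference supplies an integral form: $D_+ y(t) \leq -c$ on an interval forces $y(t_2) - y(t_1) \leq -c(t_2 - t_1)$.

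Next, I would show $y(t) \to 0$ as $t \to \infty$. By the monotonicity just proved, $L := \lim_{t\to\infty} y(t) \geq 0$ exists. Suppose for contradiction $L > 0$. Then $y(t) \in [L, y(0)]$ for all $t \geq 0$, and because $\alpha$ is continuous and strictly positive on this compact interval, $c := \min_{s \in [L, y(0)]} \alpha(s) > 0$. Hence $D_+ y(t) \leq -c$, which by the integral form forces $y(t) \leq y(0) - ct$ and eventually $y(t) < L$, a contradiction. To make the decay uniform, fix $r > 0$ and $\eps \in (0,r)$ and set $c(r,\eps) := \min_{s\in[\eps,r]}\alpha(s) > 0$ and $T(r,\eps) := (r - \eps)/c(r,\eps)$. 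The same argument shows that for every $y$ with $y(0) \leq r$, as long as $y(t) \geq \eps$ one has $D_+ y(t) \leq -c(r,\eps)$, so $y$ must enter $[0,\eps]$ before time $T(r,\eps)$ and stays there forever by monotonicity.

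Finally, I would consolidate these uniform bounds into a $\KL$-function. Define the worst-case envelope
\[
\omega(r,t) := \sup\bigl\{ y(t) \,:\, y(0) \leq r,\ y \text{ satisfies \eqref{eq:ComparisonPrinciple} in the Dini sense} \bigr\}.
\]
By the previous steps, $\omega$ is well-defined, $\omega(r,0) \leq r$, nondecreasing in $r$, nonincreasing in $t$, and $\omega(r,t) \to 0$ as $t \to \infty$ for each fixed $r$. The hard part is here: $\omega$ is not a priori a $\KL$-function, since it may fail to be continuous, strictly monotone, or radially unbounded in $r$. The remedy is a standard comparison-function construction: majorize $\omega$ by a continuous, strictly increasing, radially unbounded envelope in $r$ and a continuous, strictly decreasing envelope in $t$, for example by forming $\tilde\beta(r,t) := \max\{\omega(2r,t/2),\, r e^{-t}\}$ and then applying a smoothing/upper-envelope argument, or by invoking the Sontag $\KL$-lemma from \cite{Son98}. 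The resulting $\beta \in \KL$ satisfies $y(t) \leq \omega(y(0),t) \leq \beta(y(0),t)$, which is the desired estimate \eqref{eq:ComparisonPrinciple_FinalEstimate}.
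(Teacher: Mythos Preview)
The paper does not actually prove this lemma; it is stated with attribution to \cite[Lemma 6.1]{szarski1965differential} and \cite[Lemma 1]{MiI16} and then used as a black box. So there is no in-paper proof to compare against, only an appeal to the literature.

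Your four-stage argument is correct and is essentially the standard route to such a comparison principle: monotonicity from $D_+ y \leq 0$, convergence to zero via the positive lower bound of $\alpha$ on the compact interval $[L,y(0)]$, uniform hitting-time bounds $T(r,\eps)=(r-\eps)/\min_{[\eps,r]}\alpha$ depending only on $r$ and $\eps$, and finally extraction of a $\KL$ majorant from the envelope $\omega$. The first three stages are clean and the integration of the Dini inequality you use is exactly Lemma~\ref{Dinilemma} of the paper.

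One small correction in the last stage: Sontag's $\KL$-lemma (Lemma~\ref{Sontags_KL_Lemma} here, \cite[Proposition~7]{Son98}) is not the tool you want. That lemma takes an \emph{already given} $\beta\in\KL$ and factors it as $\alpha_2(\alpha_1(r)e^{-t})$; it does not produce a $\KL$ upper bound for a function that is merely monotone in each argument and vanishes in the right limits. What you need is the separate (and equally standard) envelope construction that upgrades such an $\omega$ to a genuine $\KL$-function by enforcing strict monotonicity and continuity, as in \cite{LSW96}. Your first suggestion, building something like $\max\{\omega(2r,t/2),\,re^{-t}\}$ and then smoothing, is on the right track; just drop the reference to Sontag's lemma and spell out the monotone-envelope step.
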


Next we show that already the existence of a non-coercive Lyapunov
function is sufficient for UGAS of a system provided another mild
assumption is satisfied. To this end we need the following property of
Dini derivatives. 
  \begin{lemma}
\label{Dinilemma}
      Let $f,g: [0,\infty) \to \R$ be continuous. If for all $t\geq 0$ we
      have $D_+ f(t) \leq - g(t)$, then for all $t\geq 0$ it follows that
\begin{equation}
    \label{eq:33}
    f(t) - f(0) \leq - G(t) := - \int_0^t g(s) ds.
\end{equation}
  \end{lemma}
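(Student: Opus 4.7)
The plan is to reduce the Lemma to the classical fact that a continuous function with nonpositive Dini derivative is nonincreasing. Set
\[
h(t) := f(t) - f(0) + G(t) = f(t) - f(0) + \int_0^t g(s)\, ds.
\]
Since $g$ is continuous, $G$ is continuously differentiable with $G'(t) = g(t)$, so $h$ is continuous on $[0,\infty)$ and $h(0) = 0$. Adding a differentiable function to an arbitrary continuous function is compatible with the Dini derivative (the $\underline{\lim}$ behaves linearly when one summand is differentiable), which yields
\[
D_+ h(t) = D_+ f(t) + g(t) \leq 0 \quad \text{for all } t \geq 0.
\]
If I can show that any continuous $h$ with $D_+ h \leq 0$ on $[0,\infty)$ satisfies $h(t) \leq h(0)$ for every $t \geq 0$, then the desired inequality \eqref{eq:33} is just a rearrangement.

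The remaining nontrivial step is exactly this monotonicity claim, and it is the main technical obstacle since $D_+ h \leq 0$ alone does not directly imply monotonicity without a careful argument. I would prove it by contradiction in the standard way: suppose there exist $0 \leq a < b$ with $h(b) > h(a)$, and set $\varepsilon := (h(b)-h(a))/(2(b-a)) > 0$. Define the auxiliary function
\[
\varphi(t) := h(t) - \varepsilon(t-a),
\]
so that $\varphi(a) = h(a)$ and $\varphi(b) = h(b) - \varepsilon(b-a) > h(a) = \varphi(a)$. Let
\[
c := \sup\{ t \in [a,b] : \varphi(t) \leq \varphi(a)\}.
\]
By continuity of $\varphi$ and the strict inequality $\varphi(b) > \varphi(a)$, one has $c < b$ and $\varphi(c) = \varphi(a)$, while $\varphi(t) > \varphi(c)$ for all $t \in (c,b]$. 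This forces $D_+ \varphi(c) \geq 0$. On the other hand, since $G$ is differentiable,
\[
D_+ \varphi(c) = D_+ h(c) - \varepsilon \leq -\varepsilon < 0,
\]
a contradiction. Hence $h$ is nonincreasing on $[0,\infty)$, and in particular $h(t) \leq h(0) = 0$ for all $t \geq 0$, which is precisely \eqref{eq:33}.

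One small point I would be careful about is the linearity $D_+(f+G)(t) = D_+ f(t) + G'(t)$ used above: this holds whenever $G$ is differentiable at $t$, because then the limit $(G(t+\tau)-G(t))/\tau \to G'(t)$ can be pulled out of the $\underline{\lim}$ in \eqref{eq:4}. Since $g$ is assumed continuous, $G$ is $C^1$, so this identity is valid at every $t \geq 0$ and the argument goes through without further hypotheses.
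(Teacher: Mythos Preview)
Your proof is correct and follows essentially the same route as the paper: define $h=f+G$, observe $D_+h\le 0$ using the continuity of $g$, and conclude that $h$ is nonincreasing. The only difference is that the paper obtains the monotonicity step by citing \cite[Theorem~2.1]{szarski1965differential}, whereas you supply a self-contained contradiction argument via the auxiliary function $\varphi(t)=h(t)-\varepsilon(t-a)$; both are standard and equivalent in strength.
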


  \begin{proof}
      As $g$ is continuous, it follows that $D_+ \left(f(t) + G(t)\right)
      = (D_+ f(t)) + g(t) \leq 0$
      for all $t\geq 0$. It follows from \cite[Theorem
      2.1]{szarski1965differential} that $f+G$ is decreasing. As $G(0) = 0$ the claim follows. 
  \end{proof}
Alternative arguments for this simple property may be found in
\cite[pp. 204-205]{Sak47}, \cite{HaT06}.

\begin{theorem}{\textbf{(Non-coercive UGAS Lyapunov theorem)}}
    \label{t:noncoeLT}
    Consider a system $\Sigma=(X,\Dc,\phi)$ and assume that $\Sigma$ is robustly forward complete and $0$ is a robust equilibrium of $\Sigma$.
		If $V$ is a non-coercive Lyapunov function for $\Sigma$, then $\Sigmafp$ is UGAS.
\end{theorem}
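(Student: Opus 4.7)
My plan is to reduce UGAS to UGATT via Proposition~\ref{prop:UGAS_Characterization} (since RFC and robustness of the equilibrium are assumed) and then establish UGATT by combining a Barbalat-type integrability estimate with the robustness of the equilibrium. The only information we have from $V$ is the upper bound $V(x)\leq\psi_2(\|x\|_X)$ together with the dissipation $\dot V_d(x)\leq-\alpha(\|x\|_X)$, and the whole argument is about extracting a uniform norm estimate out of these.

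First I would integrate the dissipation inequality along trajectories. Applying Lemma~\ref{Dinilemma} to $u\mapsto V(\phi(u,x,d))$ (and, after shifting time by $T$ via the cocycle property, to $u\mapsto V(\phi(u+T,x,d))$) yields
\begin{equation*}
V(\phi(t,x,d)) + \int_0^{t}\alpha(\|\phi(s,x,d)\|_X)\,ds \;\leq\; V(x) \;\leq\; \psi_2(\|x\|_X),
\end{equation*}
for every $x\in X$, $d\in\Dc$ and $t\geq0$. This gives two facts: $V(\phi(\cdot,x,d))$ is non-increasing and $\int_T^\infty \alpha(\|\phi(s,x,d)\|_X)\,ds \leq V(\phi(T,x,d))$ for every $T\geq 0$.

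Next I would show a uniform-in-$(x,d)$ decay of $V$: for every $\eta>0$ and $r>0$ there is $T=T(\eta,r)$ with $V(\phi(t,x,d))\leq \eta$ for all $\|x\|_X\leq r$, $d\in\Dc$, $t\geq T$. This is the step where non-coercivity most visibly bites; here I use the upper bound in its contrapositive form. If the claim failed, we would find sequences $t_n\to\infty$, $x_n\in B_r$, $d_n\in\Dc$ with $V(\phi(t_n,x_n,d_n))>\eta$. Monotonicity of $V$ along each trajectory forces $V(\phi(s,x_n,d_n))>\eta$ for all $s\in[0,t_n]$, and then $V\leq\psi_2\circ\|\cdot\|_X$ gives $\|\phi(s,x_n,d_n)\|_X>\psi_2^{-1}(\eta)=:\eta'$ on the same interval. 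Integrating $\alpha$ along the trajectory then produces $t_n\alpha(\eta')\leq\psi_2(r)$, contradicting $t_n\to\infty$.

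Finally I would combine this uniform $V$-decay with REP to get UGATT. Fix $\varepsilon,r>0$. Choose $\delta:=\delta(\varepsilon,1)>0$ from Definition~\ref{def:RobustEqPoint}, apply the previous step with $\eta:=\alpha(\delta)/2$ to obtain $T=T(\eta,r)$, and set $\tau:=T+1$. For any $t\geq\tau$, $\|x\|_X\leq r$, $d\in\Dc$, the residual estimate gives $\int_{t-1}^{t}\alpha(\|\phi(s,x,d)\|_X)\,ds\leq \alpha(\delta)/2$, so the Lebesgue measure of $\{s\in[t-1,t]:\|\phi(s,x,d)\|_X>\delta\}$ is at most $1/2<1$. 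Therefore there exists $s^\ast\in[t-1,t]$ with $\|\phi(s^\ast,x,d)\|_X\leq\delta$, and the cocycle property together with REP (applied on the horizon $[0,t-s^\ast]\subset[0,1]$ with the shifted disturbance $d(s^\ast+\cdot)\in\Dc$) yields $\|\phi(t,x,d)\|_X\leq\varepsilon$. This is precisely UGATT, so Proposition~\ref{prop:UGAS_Characterization} concludes UGAS. The conceptual obstacle is the second step: without coercivity one cannot read a norm bound off $V$ directly, and the trick is to route through monotonicity of $V$ along trajectories and to choose the $V$-threshold $\eta$ tied to the REP datum $\delta$ via $\eta=\alpha(\delta)/2$ so that the leftover $\alpha$-integral cannot fill a whole unit interval.
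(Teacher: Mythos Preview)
Your proof is correct and follows a genuinely different route from the paper's. The paper first establishes uniform stability by a contradiction argument that relies on the characterization of RFC together with REP via the decomposition $\|\phi(t,x,d)\|_X \leq \sigma(\|x\|_X) + \chi(\|x\|_X,t)$ of Proposition~\ref{prop:RFC_and_0-REP_criterion}; it then uses this stability to force a uniform lower norm bound on long time intervals in the UGATT step. You bypass Proposition~\ref{prop:RFC_and_0-REP_criterion} entirely: instead of proving stability first, you extract a uniform decay of $V$ along trajectories (using only monotonicity of $t\mapsto V(\phi(t,x,d))$ and the upper sandwich $V\leq\psi_2(\|\cdot\|_X)$), and then convert the small residual integral $\int_{t-1}^{t}\alpha(\|\phi(s,x,d)\|_X)\,ds\leq\alpha(\delta)/2$ into a pointwise norm bound via a measure argument combined with the raw REP definition on the unit interval. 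Your approach is more economical---RFC is invoked only through the final appeal to Proposition~\ref{prop:UGAS_Characterization}, and REP is used only in its most elementary form---whereas the paper's approach has the advantage of exposing uniform stability explicitly as a standalone intermediate result and of illustrating the utility of the $(\sigma,\chi)$ representation.
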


\begin{proof}
Let $V$ be a non-coercive Lyapunov function and let $\alpha \in \mathcal{K}$ be such that we have the decay estimate \eqref{DissipationIneq_UGAS_With_Disturbances}.
Along any trajectory $\phi$ of $\Sigma$ 
we have the inequality
\begin{equation}
\dot V_{d(t+\cdot)}(\phi(t,x,d)) \leq -\alpha(\|\phi(t,x,d)\|_X), \quad
\forall t\geq 0.
\label{eq:JustEq1}
\end{equation}
It follows from Lemma~\ref{Dinilemma}  that 
\begin{equation}
V(\phi(t,x,d)) - V(x) \leq -\int_0^t \alpha(\|\phi(s,x,d)\|_X)ds,
\label{eq:JustEq3}
\end{equation}
which implies that for all $t\geq 0$ we have
\begin{eqnarray}
\int_0^t \alpha(\|\phi(s,x,d)\|_X)ds  
%&\leq& - V(\phi((t,x,d))) + V(x)\\
&\leq&
V(x).
\label{eq:Vintbound}
\end{eqnarray}

\textbf{Step 1: (Stability)} 
 Seeking a contradiction, assume that $\Sigma$ is not uniformly stable in $x^* = 0$. Then there exist an
$\varepsilon >0$ and sequences $\{ x_k \}_{k\in\N}$ in $X$, $\{ d_k
\}_{k\in\N}$ in $\mathcal{D}$, and $t_k \geq 0$ such that $x_k \to 0$ as $k \to \infty$ and
\begin{equation*}
    \| \phi(t_k,x_k,d_k) \|_X = \varepsilon \quad \forall k \geq 1.
\end{equation*}
By the bound on $V$ given by \eqref{eq:1} it follows that $V(x_k)\to
0$. 

Since $\Sigma$ is RFC and $0$ is a robust equilibrium point of $\Sigma$, Proposition~\ref{prop:RFC_and_0-REP_criterion}
implies that there exist $\sigma \in \Kinf$ and $\chi$ as in item (iii) of Proposition~\ref{prop:RFC_and_0-REP_criterion} so that \eqref{eq:RFC_and_REP}
holds.

Appealing to continuity of $\chi$ we may choose $\tau>0$ such that $\chi(r,\tau) \leq \varepsilon/2$ for all $0\leq r \leq \varepsilon$.

 Using \eqref{eq:RFC_and_REP} we obtain that for all $k \in \N$ and for all $t \in [t_k-\tau,t_k ]$ we have either
$\|\phi(t,x_k,d_k) \|_X > \varepsilon$ or
\begin{multline*}
    \sigma\big(\| \phi(t,x_k,d_k) \|_X\big) \geq \| \phi(t_k,x_k,d_k) \|_X - \chi\big(\|
    \phi(t,x_k,d_k) \|_X, t_k -t\big)  \geq \varepsilon -
    \frac{\varepsilon}{2} = \frac{\varepsilon}{2}.
\end{multline*}
Setting $t:=0$ in \eqref{eq:RFC_and_REP} and using the identity axiom ($\Sigma$\ref{axiom:Identity}) we see that $\sigma(r)\geq r$ for all $r\geq0$, and thus $\sigma^{-1}(r)\leq r$ for all $r\in\R_+$.

Hence $\min\big\{\|\phi(s,x_k,d)\|_X \ |\  s \in[t_k-\tau,t_k]\big\} \geq \min\{\eps,\sigma^{-1}(\frac{\eps}{2})\}= \sigma^{-1}(\frac{\eps}{2})$ and \eqref{eq:Vintbound} implies for every $k$ 
\begin{equation*}
    V(x_k) \geq \int_{t_k-\tau}^{t_k} \alpha(\|\phi(s,x_k,d)\|_X)ds \geq 
    \alpha \circ \sigma^{-1}\Big(\frac{\varepsilon}{2}\Big) \tau > 0.
\end{equation*}
This contradiction proves uniform stability of $\Sigmafp$.

\textbf{Step 2: (Uniform global attractivity)} Again we assume that $\Sigmafp$ is
not uniformly globally attractive. This implies that there are $r,\varepsilon>0$
and sequences $\{ x_k \}_{k\in\N}$ in $X$, $\{ d_k \}_{k\in\N}$ in $\mathcal{D}$ 
and times $t_k \to \infty$, as $k\to \infty$ such that 
\begin{equation}
\label{ass-noUA}
    \|x_k\|_X \leq r \text{ and } \| \phi(t_k,x_k,d_k) \|_X \geq \varepsilon.
\end{equation}
As we have already shown that $\Sigmafp$ is uniformly stable 
we may choose for the above $\eps$ a $\delta = \delta(\eps)> 0$ such that $\|z\|_X < \delta$ implies 
\begin{equation}
    \label{eq:5}
    \| \phi(t,z,d ) \|_X \leq \frac{\varepsilon}{2} \quad \forall \, t\geq 0, \forall\, d\in \mathcal{D}.
\end{equation}
Now assume that there exist a certain $k \in\N$ and $s_k \in [0,t_k]$ so that $\| \phi(s_k,x_k,d_k) \|_X \leq \delta$.
Since $\Sigma$ satisfies the cocycle property~($\Sigma$\ref{axiom:Cocycle}), \eqref{eq:5} and \eqref{ass-noUA} lead us to 
\begin{align*}
\eps \leq \| \phi(t_k,x_k,d_k) \|_X 
=  \big\| \phi\big(t_k-s_k,\phi(s_k,x_k,d_k),d_k(s+\cdot)\big) \big\|_X \leq \frac{\eps}{2},
\end{align*}
which is a contradiction.
We conclude that for all $k$ and all $t\in [0,t_k]$ we have
\begin{equation}
    \label{eq:7}
  \| \phi(t,x_k,d_k) \|_X \geq \delta.  
\end{equation}
It then follows with \eqref{ass-noUA}, \eqref{eq:1} and
\eqref{eq:Vintbound} that for all $k\geq 1$
\begin{align}
\label{eq:6}
    \psi_2(r) \geq \psi_2(\|x_k\|_X) 
		\geq V(x_k) \geq \int_0^{t_k}\hspace{-3mm} \alpha\left(\| \phi(t,x_k,d_k) \|_X\right)dt
    \geq \alpha(\delta) t_k.
\end{align}
As $t_k\to \infty$, this is a contradiction and hence $\Sigmafp$ is uniformly globally attractive.

Since $\Sigma$ is robustly forward complete and $0$ is a robust equilibrium point of $\Sigma$, Proposition~\ref{prop:UGAS_Characterization} ensures that $\Sigmafp$ is UGAS.
\end{proof}

\begin{theorem}{\textbf{(Non-coercive local UAS Lyapunov  theorem)}}
    \label{t:noncoeLT-local}

    Consider a system $\Sigma=(X,\Dc,\phi)$  and assume that $\Sigma$ is
    robustly forward complete and $0$ is a robust equilibrium of $\Sigma$.
    Let $V$ be a
    non-coercive Lyapunov function in the sense of
    Definition~\ref{def:UGAS_LF_With_Disturbances} and assume that
    $\alpha$ in \eqref{DissipationIneq_UGAS_With_Disturbances} is positive
    definite. Then $\Sigma$
    is locally uniformly asymptotically stable in $x^* =0$.
\end{theorem}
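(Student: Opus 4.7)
The plan is to adapt the two-step proof of Theorem~\ref{t:noncoeLT} to the local setting. The essential new wrinkle is that $\alpha$ is only positive definite, so it need not be monotone; this will be handled by trapping trajectories, via uniform stability, inside a compact annulus on which a continuous positive definite $\alpha$ attains a positive minimum bounded away from zero.

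First, I would establish uniform stability of $0$ by running Step~1 of the proof of Theorem~\ref{t:noncoeLT} essentially verbatim. That argument combines the integral bound $\int_0^t \alpha(\|\phi(s,x,d)\|_X)\,ds \leq V(x)$ with the representation \eqref{eq:RFC_and_REP} furnished by Proposition~\ref{prop:RFC_and_0-REP_criterion} to derive a strict lower bound of the form $\alpha(\sigma^{-1}(\eps/2))\tau > 0$ on $V(x_k)$ along a putative destabilizing sequence $x_k \to 0$, which contradicts $V(x_k)\to 0$. The only property of $\alpha$ actually invoked in this estimate is strict positivity at positive arguments, which remains true for a merely positive definite $\alpha$.

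Next, having uniform stability, I pick $r_0 > 0$ so small that $\|x\|_X \leq r_0$ implies $\|\phi(t,x,d)\|_X \leq 1$ for all $t\geq 0$ and $d\in \Dc$, and aim to show uniform attractivity on $B_{r_0}$. If this fails, there exist $\eps > 0$ and sequences $x_k \in B_{r_0}$, $d_k \in \Dc$, $t_k \to \infty$ with $\|\phi(t_k,x_k,d_k)\|_X \geq \eps$. As in Step~2 of Theorem~\ref{t:noncoeLT}, uniform stability supplies a $\delta = \delta(\eps) > 0$ such that the cocycle axiom~($\Sigma$\ref{axiom:Cocycle}) rules out the existence of any $s \in [0,t_k]$ with $\|\phi(s,x_k,d_k)\|_X < \delta$; hence $\|\phi(t,x_k,d_k)\|_X \in [\delta,1]$ for every $t \in [0,t_k]$. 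The crucial difference from the global case appears here: since $\alpha$ is continuous and positive definite on the compact interval $[\delta,1]$, we obtain a strictly positive constant $m := \min_{s \in [\delta,1]} \alpha(s) > 0$, and integrating the dissipation inequality gives
\begin{equation*}
V(x_k) \;\geq\; \int_0^{t_k}\alpha(\|\phi(t,x_k,d_k)\|_X)\,dt \;\geq\; m\, t_k \;\to\; \infty,
\end{equation*}
contradicting the upper bound $V(x_k) \leq \psi_2(r_0)$.

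Uniform stability combined with the uniform attractivity just established on $B_{r_0}$ yields the required $\KL$-estimate on $B_{r_0}$ (either by a straightforward localisation of Proposition~\ref{prop:UGAS_Characterization} or by a standard Sontag-type argument), which is precisely local uniform asymptotic stability in the sense of Definition~\ref{d:stability_new}(iii). The main obstacle is the treatment of the non-monotone $\alpha$: uniform stability must be exploited first to confine trajectories to a compact annulus on which continuity of $\alpha$ delivers the uniform positive lower bound that in the global setting was automatic from $\alpha\in\K$.
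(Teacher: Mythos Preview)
Your approach matches the paper's, but your claim that Step~1 runs ``essentially verbatim'' with the lower bound $\alpha(\sigma^{-1}(\eps/2))\tau$ is not quite right: that particular bound uses the monotonicity of $\alpha\in\K$ to pass from $\|\phi(s,x_k,d_k)\|_X\geq\sigma^{-1}(\eps/2)$ to $\alpha(\|\phi(s,x_k,d_k)\|_X)\geq\alpha(\sigma^{-1}(\eps/2))$. With $\alpha$ merely positive definite, the paper applies in Step~1 the very compact-interval device you reserve for Step~2, replacing the final estimate by
\[
V(x_k)\ \geq\ \min\bigl\{\alpha(\xi)\ \big|\ \xi\in[\sigma^{-1}(\tfrac{\eps}{2}),\eps]\bigr\}\,\tau\ >\ 0,
\]
the upper bound $\eps$ being available since, by continuity of trajectories and the choice $\|\phi(t_k,x_k,d_k)\|_X=\eps$, one may take $t_k$ to be the first time the trajectory reaches the $\eps$-sphere. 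Once this correction is made, your argument and the paper's coincide.
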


\begin{proof}
    We just point out the necessary modifications in the proof of
    Theorem~\ref{t:noncoeLT}. The derivation of the integral bound
    \eqref{eq:Vintbound} does not depend on a particular property of
    $\alpha$, so that it also holds under the assumptions of the present
    theorem. To obtain stability we argue as in Step 1 of the previous
    proof. 
	We only need to change the final estimate to obtain
\begin{eqnarray*}
    V(x_k) \geq \int_{t_k-\tau}^{t_k} \alpha(\|\phi(s,x,d)\|_X)ds \geq 
    \min \Big\{ \alpha( \xi) \ |\  \xi \in [\sigma^{-1}\Big(\frac{\varepsilon}{2}\Big),\eps]\Big\}    \tau > 0.
\end{eqnarray*}
This again yields the contradiction which proves uniform stability.

Given that we have shown uniform stability, we may choose an
$R>0$ and a corresponding $r>0$ such that all solutions
with initial condition in $B_r$ remain in $B_R$ for all
times.  The proof of locally uniform attractivity uses precisely the same
arguments as before but restricted to solutions with initial conditions in
$B_r$. The only necessary change in the argument is then to replace the
term $\alpha(\delta) t_k$ on the right hand side of \eqref{eq:6} by
\begin{equation*}
    \min \{ \alpha( \xi) \ |\ \xi \in [\delta , R] \} t_k,
\end{equation*}
which uses the invariance argument we have just noted. Again this
expression is unbounded as $k \to \infty$ and we obtain the desired
contradiction. 
\end{proof}

Theorem~\ref{t:noncoeLT} shows that the existence of a non-coercive Lyapunov function implies UGAS in $0$ provided $\Sigma$ is robustly forward complete and $0$ is a robust equilibrium of $\Sigma$.
At the same time the examples in Section~\ref{sec:examples} show that the 
existence of a non-coercive Lyapunov function alone does not even imply
forward completeness. 
And together with forward completeness it still does not imply robust forward completeness and robustness of the zero equilibrium, see \cite[Remark 4]{HaS11} and equation~\eqref{eq:HanteSigalottiEx} in this paper.
Nevertheless, the following holds:
\begin{proposition}
    \label{prop:noncoeLT_plus_FC_implies_ULIM_v2}
    Consider a forward complete system $\Sigma=(X,\Dc,\phi)$. If there
    exists a non-coercive Lyapunov function $V$ for $\Sigma$, then $0$ is
    uniformly globally weakly attractive.
\end{proposition}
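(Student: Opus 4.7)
The plan is to derive the conclusion by contradiction, using the integral bound on $\alpha(\|\phi(\cdot,x,d)\|_X)$ that follows from the Lyapunov decay estimate together with Lemma~\ref{Dinilemma}. Forward completeness guarantees that $\phi(\cdot,x,d)$ is defined and continuous on $[0,\infty)$ for every $(x,d)$, which is all that is needed to make the integral meaningful; robustness of the equilibrium and robust forward completeness, which were crucial in Theorem~\ref{t:noncoeLT}, are not invoked here.

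First I would reproduce, for any trajectory of $\Sigma$, the key inequality from the proof of Theorem~\ref{t:noncoeLT}: from $\dot V_{d(t+\cdot)}(\phi(t,x,d)) \leq -\alpha(\|\phi(t,x,d)\|_X)$ and Lemma~\ref{Dinilemma} applied to $f(t):=V(\phi(t,x,d))$ and $g(t):=\alpha(\|\phi(t,x,d)\|_X)$ (which is continuous in $t$ by ($\Sigma$\ref{axiom:Continuity})), one obtains
\begin{equation*}
\int_0^t \alpha(\|\phi(s,x,d)\|_X)\,ds \;\leq\; V(x) - V(\phi(t,x,d)) \;\leq\; V(x) \;\leq\; \psi_2(\|x\|_X),
\end{equation*}
valid for all $t\geq 0$, $x\in X$, $d\in\Dc$.

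Next I would negate the definition of uniform global weak attractivity (Definition~\ref{d:stability_new}\eqref{def:UniformGlobalWeakAttractivity}): assume there exist $\eps>0$ and $r>0$ such that for every $\tau>0$ there are $x\in \overline{B_r}$ and $d\in\Dc$ for which $\|\phi(t,x,d)\|_X > \eps$ holds for every $t\in[0,\tau]$. Choosing $\tau_k\to\infty$ produces sequences $\{x_k\}\subset \overline{B_r}$ and $\{d_k\}\subset\Dc$ such that
\begin{equation*}
\|\phi(t,x_k,d_k)\|_X > \eps \qquad \forall\, t\in[0,\tau_k],\ \forall\, k\in\N.
\end{equation*}
Inserting this lower bound into the integral estimate and using monotonicity of $\alpha\in\K$ gives
\begin{equation*}
\psi_2(r) \;\geq\; V(x_k) \;\geq\; \int_0^{\tau_k} \alpha(\|\phi(s,x_k,d_k)\|_X)\,ds \;\geq\; \alpha(\eps)\,\tau_k.
\end{equation*}
Since $\alpha(\eps)>0$ and $\tau_k\to\infty$, the right-hand side diverges, contradicting the fixed upper bound $\psi_2(r)$. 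This contradiction yields the claim.

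I do not expect a real obstacle: the only point that needs care is that $\tau$ in the definition of uniform global weak attractivity must be chosen uniformly in $(x,d)$ from the ball and the disturbance set, which is exactly what the uniform upper bound $\psi_2(r)$ on $V(x_k)$ delivers. The argument does not require the comparison principle of Lemma~\ref{thm:ComparisonPrinciple} nor any continuity property of $V$ beyond the bound \eqref{eq:1}, and it works with $\alpha\in\K$ only (positive definiteness would have sufficed as well, as only $\alpha(\eps)>0$ is used).
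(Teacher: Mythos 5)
Your proof is correct and follows essentially the same route as the paper: both derive the integral bound $\int_0^t \alpha(\|\phi(s,x,d)\|_X)\,ds \leq V(x) \leq \psi_2(\|x\|_X)$ from Lemma~\ref{Dinilemma} and then argue by contradiction, with the only cosmetic difference that the paper exhibits the time $\tau(\eps,r)=\bigl(\psi_2(r)+1\bigr)/\alpha(\eps)$ explicitly and contradicts its failure, whereas you let $\tau_k\to\infty$ along a sequence witnessing the negation. Your closing observations (no need for the comparison principle, $\alpha\in\K$ suffices, uniformity comes from the bound $\psi_2(r)$) are all accurate.
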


\begin{proof}
    Let $\alpha\in \K$ and $\psi_2\in\Kinf$ be the functions
    characterizing the decay condition in \eqref{DissipationIneq_UGAS_With_Disturbances} and
    the upper bound of $V$ in
    \eqref{LyapFunk_1Eig_UGAS}, respectively.  As in
    the proof of Theorem~\ref{t:noncoeLT}, forward completeness and the
    existence of a non-coercive Lyapunov function imply the existence of a
    $\psi_2\in\Kinf$ such that for all $x\in X, d\in\Dc, t\geq 0$
\begin{eqnarray}
\int_0^t \alpha(\|\phi(s,x,d)\|_X)ds \leq V(x) \leq \psi_2(\|x\|_X).
\label{eq:Vintbound_2_v2}
\end{eqnarray}
Fix $\eps>0$ and $r>0$ and define $\tau(\eps,r):=\frac{\psi_2(r) +
  1}{\alpha(\eps)}$. We claim that this choice of $\tau = \tau(\eps,r)$
yields an appropriate time to conclude uniform weak attraticivity of
$0$. Assume to the contrary that there exist $x\in X$ with $\|x\|_X \leq r$ and some $d\in \Dc$ with the property
$\|\phi(t,x,d)\|_X \geq \eps$ for all $t\in[0,\tau(\eps,r)]$. In view of \eqref{eq:Vintbound_2_v2} and since $\alpha\in\K$, we obtain
\[
\psi_2(r) + 1 = \tau(\eps,r) \alpha(\eps) \leq  \int_0^{\tau(\eps,r)} \alpha(\|\phi(s,x,d)\|_X)ds \leq \psi_2(r),
\]
a contradiction.
\end{proof}

\section{Applications}
\label{sec:Examples_I}

In this section we give a few examples of system classes that are covered
by our assumptions. 
Subsumed are, of course, the systems of ordinary
differential equations with uniformly bounded disturbances studied in
\cite{SoW96}.
 %and briefly introduced in Proposition~\ref{prop:Approachability_equals_Uniform_Approachability}. 
This
class can be extended to systems of ordinary
differential equations on Banach spaces using the tools described in
\cite{AuC96}. We do not dwell on this and prefer to present examples in
which the unboundedness of generators of $C_0$ semigroups may play a role.

\subsection{Homogeneous systems}

Some of the arguments can be simplified if the system exhibits a homogeneity property.
We call a system $\Sigma=(X,\Dc,\phi)$ homogeneous in $x$ (of order one) if for
all $\lambda \geq 0 $, all initial conditions $x \in X$, $d \in \mathcal{D}$ and
$t\geq 0$ we have 
\begin{equation*}
    \phi( t, \lambda x, d) = \lambda \phi(t,x,d).
\end{equation*}
In particular, systems which are linear in $x$ are homogeneous in $x$. In
the sequel we will simply speak of homogeneous systems. Some results on
Lyapunov functions for homogeneous systems are already available in
\cite[Theorem~14.3]{Dei92}.

For homogeneous systems robustness of the equilibrium point and robust
forward completeness are equivalent.

\begin{lemma}
    \label{lem:hom.REP=RFC}
    Consider a homogeneous system
    $\Sigma=(X,\Dc,\phi)$. The equilibrium point $x^* = 0$ is robust if
    and only if $\Sigma$ is robustly forward complete.
\end{lemma}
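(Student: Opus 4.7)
The plan is to prove both directions directly by rescaling, using that for a homogeneous system one has $\phi(t,0,d)=0$ (take $\lambda=0$ in the homogeneity identity), so that $0$ is in particular an equilibrium, and one only needs to compare trajectories starting on the unit sphere with trajectories starting at arbitrarily small or large norms.

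\textbf{RFC implies robustness of the equilibrium.} Assuming $\Sigma$ is RFC, I would fix $h>0$ and set
\begin{equation*}
M(h) := \sup\big\{\|\phi(t,x,d)\|_X \ |\  \|x\|_X \leq 1,\ t\in[0,h],\ d\in\Dc\big\},
\end{equation*}
which is finite by Definition~\ref{Def_RFC}. Given $\eps>0$, choose $\delta := \eps/\max\{1,M(h)\}$. For any $x\neq 0$ with $\|x\|_X\leq \delta$, apply homogeneity with $\lambda := \|x\|_X$ to the unit vector $x/\|x\|_X$ to write $\phi(t,x,d) = \|x\|_X\, \phi(t,x/\|x\|_X,d)$, giving $\|\phi(t,x,d)\|_X \leq \|x\|_X M(h) \leq \eps$ on $[0,h]$ for every $d\in\Dc$. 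For $x=0$ the bound is trivial. Hence $0$ satisfies \eqref{eq:RobEqPoint}.

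\textbf{Robustness of the equilibrium implies RFC.} Conversely, suppose $0$ is a robust equilibrium and fix $\tau>0$ and $C>0$. Apply Definition~\ref{def:RobustEqPoint} with $\eps := 1$ and $h := \tau$ to obtain $\delta = \delta(1,\tau)>0$ such that $\|\phi(t,y,d)\|_X \leq 1$ whenever $\|y\|_X\leq\delta$, $t\in[0,\tau]$, $d\in\Dc$. Given any $x$ with $\|x\|_X \leq C$, set $\lambda := C/\delta$ and $y := x/\lambda$, so $\|y\|_X \leq \delta$. By homogeneity, $\phi(t,x,d) = \lambda\,\phi(t,y,d)$, hence $\|\phi(t,x,d)\|_X \leq C/\delta$ uniformly in $t\in[0,\tau]$ and $d\in\Dc$. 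Taking the supremum over $\|x\|_X\leq C$ yields the finiteness condition in Definition~\ref{Def_RFC}.

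The argument is essentially a one-line rescaling in each direction, so there is no real obstacle; the only thing to watch is to handle the case $x=0$ separately (which is immediate from homogeneity with $\lambda=0$) and to keep track that the constants $M(h)$ and $\delta(1,\tau)$ depend only on the time horizon and not on the state, which is what permits the rescaling trick.
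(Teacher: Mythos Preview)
Your proposal is correct and follows essentially the same rescaling argument as the paper's own proof. The only cosmetic differences are that in the REP $\Rightarrow$ RFC direction you use a single scaling factor $\lambda=C/\delta$ for all $x\in \overline{B_C}$ whereas the paper distinguishes the cases $\|x\|_X\leq\delta$ and $\|x\|_X>\delta$, and in the RFC $\Rightarrow$ REP direction you rescale to the unit sphere while the paper rescales to the closed unit ball; neither variation affects the substance of the argument.
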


\begin{proof}
Let $0$ be a robust equilibrium of $\Sigma$. Then for the choice $\eps=1$ and any $\tau>0$ there is a $\delta>0$ so that
\begin{equation}
    \label{eq:29}
\|x\|_X\leq\delta,\ d\in\Dc,\ t\in[0,\tau] \qrq \|\phi(t,x,d)\|_X\leq 1.    
\end{equation}
Let $C>0$ and $\tau>0$ be arbitrary and choose a $\delta = \delta(\tau)$
such that \eqref{eq:29} is satisfied. Consider $x \in B_C$ and an
arbitrary $d\in\Dc$. If $\|x\|_X\leq \delta$, then the solution is bounded
by $1$ on $[0,\tau]$ and there is nothing to show. Otherwise, let $\lambda
:= \delta/\|x\|_X$. Then we obtain from homogeneity and \eqref{eq:29}
\begin{eqnarray*}
\|\phi(t,x,d)\|_X =  \frac{1}{\lambda} \|\phi(t,\lambda x, d)\|_X 
\leq \frac{\|x\|_X}{\delta} \leq \frac{C}{\delta}. 
\end{eqnarray*}
This shows that $\Sigma$ is RFC.    

Conversely, assume that $\Sigma$ is RFC and fix $\varepsilon >0$ and $\tau>0$. By assumption we have
\begin{equation}
    \label{eq:30}
  \sup\{ \|\phi(t,x,d)\|_X
\ |\  \|x\|_X\leq 1,\: t \in [0,\tau],\: d\in \Dc \} := \Delta < \infty.  
\end{equation}
If $\Delta \leq \varepsilon$ we are done. Otherwise, let $\delta :=
\varepsilon/ \Delta$. If $x\in X$ with $\|x\|_X\leq \delta$ and $t \in
[0,\tau]$ it follows for all $d\in \mathcal{D}$ that
\begin{equation*}
   \|\phi(t,x,d)\|_X = \delta
   \left\|\phi\left(t,\frac{1}{\delta}x,d\right)\right\|_X \leq
   \frac{\varepsilon}{\Delta} \Delta = \varepsilon.
\end{equation*}
This shows robustness of the equilibrium point.
\end{proof}

\begin{corollary}
    \label{c:homogeneous}
    Consider a homogeneous system $\Sigma=(X,\Dc,\phi)$ and assume that
    $V$ is a non-coercive Lyapunov function in the sense of
    Definition~\ref{def:UGAS_LF_With_Disturbances} and assume that
    $\alpha$ in \eqref{DissipationIneq_UGAS_With_Disturbances} is positive
    definite.  If 
    $0$ is a robust equilibrium of
    $\Sigma$, then $\Sigma$ is uniformly globally asymptotically stable in $x^* =0$.
\end{corollary}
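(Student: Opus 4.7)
The plan is to verify the hypotheses of the local non-coercive Lyapunov theorem (Theorem \ref{t:noncoeLT-local}) and then bootstrap the resulting local $\KL$-bound to a global one by scaling, exploiting the homogeneity of $\phi$.

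First, I would observe that since $\Sigma$ is homogeneous and $0$ is a robust equilibrium, Lemma \ref{lem:hom.REP=RFC} yields that $\Sigma$ is robustly forward complete. All hypotheses of Theorem \ref{t:noncoeLT-local} are therefore met, and the theorem provides some $\beta\in\KL$ and some radius $r>0$ such that
\begin{equation*}
\|\phi(t,x,d)\|_X \leq \beta(\|x\|_X, t) \quad \forall\, x\in B_r,\ d\in\Dc,\ t\geq 0.
\end{equation*}

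Next, I would globalize this estimate using homogeneity. Given any $x\in X\setminus\{0\}$, set $\lambda := 2\|x\|_X/r$, so that $x/\lambda \in B_r$ with $\|x/\lambda\|_X = r/2$. Order-one homogeneity combined with the local bound applied to $x/\lambda$ then gives
\begin{equation*}
\|\phi(t,x,d)\|_X = \lambda\,\|\phi(t,x/\lambda,d)\|_X \leq \lambda\,\beta(r/2,t) = \frac{2\|x\|_X}{r}\,\beta(r/2,t).
\end{equation*}
Defining $\tilde\beta(s,t) := (2s/r)\,\beta(r/2,t)$, the displayed inequality reads $\|\phi(t,x,d)\|_X \leq \tilde\beta(\|x\|_X,t)$, and it extends trivially to $x=0$ because homogeneity with $\lambda=0$ forces $\phi(t,0,d)=0$. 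A short verification confirms $\tilde\beta\in\KL$: it is linear and strictly increasing in $s$ because $\beta(r/2,t)>0$ for every finite $t$ (since $\beta(\cdot,t)\in\K$), and it is decreasing to $0$ in $t$ as a positive multiple of $\beta(r/2,\cdot)\in\LL$. This is precisely the UGAS estimate.

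No step is genuinely hard once Theorem \ref{t:noncoeLT-local} and Lemma \ref{lem:hom.REP=RFC} are in hand; the only small subtlety is confirming that the rescaled bound $\tilde\beta$ is a valid $\KL$-function, for which strict positivity of $\beta(r/2,t)$ at every finite $t$ is the critical observation.
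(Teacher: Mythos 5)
Your proof is correct and takes essentially the same route as the paper: Lemma~\ref{lem:hom.REP=RFC} yields RFC, Theorem~\ref{t:noncoeLT-local} gives a local $\KL$ estimate, and a homogeneity rescaling extends it globally. The paper phrases the final step in terms of global uniform attractivity and then (implicitly) Proposition~\ref{prop:UGAS_Characterization}, while you scale the local $\KL$ bound directly to a global one, but the underlying scaling argument and ingredients are the same.
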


\begin{proof}
    By Lemma~\ref{lem:hom.REP=RFC} $\Sigma$ is RFC.  We then have local uniform
    asymptotic stability by Theorem~\ref{t:noncoeLT}. Global uniform
    attractivity then follows by homogeneity using a simple scaling
     argument similar to those in the proof of
      Lemma~\ref{lem:hom.REP=RFC}.  
\end{proof}

\subsection{Switched linear systems in Banach spaces}

This class of infinite-dimen\-sional switched linear systems has been
studied in \cite{HaS11}. Further results for the special case of switched
linear delay systems are obtained in \cite{haidar2015converse}. Here we briefly
outline how to recover some of the results of \cite{HaS11} with the
arguments proposed here.  Let $X$ be a Banach space. Consider a set of
closed linear operators $\{ A_q \ |\  q\in Q \}$ where $Q$ is some index
set. We assume that each $A_q$ generates a $C_0$-semigroup $T_q$ on
$X$. Let
\begin{equation}
    \label{eq:9}
    \Dc := \{ d : \R_+ \to Q \ |\  d \text{ is piece-wise constant}\},
\end{equation}
where piece-wise constant means here that any half-open bounded interval $[a,b)
\subset \R_+$ can be partitioned into finitely many half open intervals
$[a_i,b_i)$ such that  $d$ is constant on each $[a_i,b_i)$. (The choice of
using right-closed intervals is mere convention and nothing would change,
if we were to use left-closed intervals instead.) We consider the
family of differential equations
\begin{equation}
    \label{eq:10}
    \dot x = A_{d(t)} x(t)
\end{equation}
which generates evolution operators in the following manner. For $d\in
\Dc$ and an interval $[s,t]$ with a partition $s=b_0 < b_1 < \ldots < b_k
= t$ such that $d \equiv d_j \in Q$ on $[b_{j-1},b_j)$, $j=1,\ldots,k$ we set
\begin{equation}
    \label{eq:11}
    \Phi_d(t,s) = T_{d_k}(t-b_{k-1}) T_{d_{k-1}}(b_{k-1} - b_{k-2}) \dots T_{d_1}(b_1-s).
\end{equation}
With this notation we have $\phi(t,x,d) = \Phi_d(t,0)x$ for all $x\in X,
d\in \Dc, t\geq 0$. It is easy to check that the conditions of
Definition~\ref{Steurungssystem} are all satisfied. We also have the
following lemma.

\begin{lemma}
    \label{lemma-switchRFC} Consider the system $\Sigma = (X,\Dc,\phi)$
    given by switched linear system \eqref{eq:10} with $\Dc$ as defined in
    \eqref{eq:9}. The following statements are equivalent.
    \begin{enumerate}[(i)]
      \item $\Sigma$ is robustly forward complete.
      \item $x^\ast = 0$ is a robust equilibrium point of $\Sigma$.
      \item There exist constants $M\geq 1$, $\omega\in \R$ such that
        \begin{equation}
            \label{eq:12}
            \| \Phi_d(t,s)  \| \leq M e^{\omega (t-s)}\quad \forall \ d\in
            \Dc,\ t\geq s \geq 0.
        \end{equation}
    \end{enumerate}
\end{lemma}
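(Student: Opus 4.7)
The plan is to prove the equivalences by going \textrm{(iii)}$\Rightarrow$\textrm{(i)}$\Leftrightarrow$\textrm{(ii)}$\Rightarrow$\textrm{(iii)}, with the last implication being the substantive one. The equivalence \textrm{(i)}$\Leftrightarrow$\textrm{(ii)} is immediate from Lemma~\ref{lem:hom.REP=RFC}, since the switched linear system is linear and hence homogeneous of order one (for any $\lambda \geq 0$, $\Phi_d(t,0)(\lambda x) = \lambda \Phi_d(t,0)x$). The implication \textrm{(iii)}$\Rightarrow$\textrm{(i)} is straightforward: if $\|\Phi_d(t,0)\| \leq M e^{\omega t}$ uniformly in $d$, then $\sup\{\|\phi(t,x,d)\|_X : \|x\|_X \leq C,\, t\in[0,\tau],\, d\in\Dc\} \leq C M e^{|\omega|\tau} < \infty$, which is exactly RFC.

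The main work lies in \textrm{(i)}$\Rightarrow$\textrm{(iii)}. The strategy is the classical argument for $C_0$-semigroups (cf.\ the proof that $\|T(t)\| \leq Me^{\omega t}$ for any $C_0$-semigroup), adapted to the switched setting by exploiting shift invariance of $\Dc$. First, by RFC together with linearity, the quantity
\begin{equation*}
M_0 := \sup\bigl\{\|\Phi_d(t,0)\| \ \big|\  t \in [0,1],\, d\in\Dc\bigr\}
\end{equation*}
is finite, and $M_0 \geq 1$ because $\Phi_d(0,0) = \Id$.

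Now for arbitrary $t \geq 0$, write $t = n + s$ with $n\in\N_0$ and $s\in[0,1)$, and decompose
\begin{equation*}
\Phi_d(t,0) = \Phi_d(n+s,n)\,\Phi_d(n,n-1)\cdots \Phi_d(1,0).
\end{equation*}
The key point is that, by the definition \eqref{eq:11} together with shift invariance of $\Dc$, each factor $\Phi_d(k,k-1)$ coincides with $\Phi_{d_k}(1,0)$ where $d_k(\cdot) := d(\cdot + (k-1)) \in \Dc$, and similarly $\Phi_d(n+s,n) = \Phi_{d_{n+1}}(s,0)$. Each of these factors has norm at most $M_0$, so
\begin{equation*}
\|\Phi_d(t,0)\| \leq M_0^{n+1} \leq M_0 \cdot M_0^{\,t} = M_0\, e^{\omega t}\quad\text{with}\quad \omega := \log M_0.
\end{equation*}
Finally, for general $0 \leq s \leq t$, shift invariance gives $\Phi_d(t,s) = \Phi_{\tilde d}(t-s,0)$ with $\tilde d := d(\cdot+s)\in\Dc$, so the bound \eqref{eq:12} follows with $M := M_0$.

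The only subtle point is to verify that the shift-invariance identity $\Phi_d(t,s) = \Phi_{d(\cdot+s)}(t-s,0)$ really holds for the evolution family defined in \eqref{eq:11}; this is a direct check from the piecewise definition and is where the axioms on $\Dc$ are actually used. Everything else is routine from the semigroup folklore.
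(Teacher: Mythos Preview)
Your proof is correct and follows essentially the same strategy as the paper: the equivalence \textrm{(i)}$\Leftrightarrow$\textrm{(ii)} via homogeneity (Lemma~\ref{lem:hom.REP=RFC}), the trivial implication \textrm{(iii)}$\Rightarrow$\textrm{(i)}, and then the substantive direction obtained by first establishing a uniform operator-norm bound on a bounded time interval and then iterating via the shift-invariance/cocycle decomposition to get the exponential estimate. The only difference is the entry point for the hard direction: the paper proves \textrm{(ii)}$\Rightarrow$\textrm{(iii)}, extracting the pointwise bounds from the robust-equilibrium property and then invoking the Banach--Steinhaus theorem to pass to a uniform operator-norm bound, whereas you go \textrm{(i)}$\Rightarrow$\textrm{(iii)} and read off $M_0 = \sup\{\|\Phi_d(t,0)\| : t\in[0,1],\, d\in\Dc\}<\infty$ directly from the RFC definition (since RFC with $C=1$, $\tau=1$ already bounds $\|\Phi_d(t,0)x\|$ uniformly over $\|x\|\leq 1$, $t$, $d$). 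Your route is marginally more direct and bypasses the uniform boundedness principle; the paper's route has the mild advantage of illustrating how the REP condition alone suffices, but in view of the already-established equivalence \textrm{(i)}$\Leftrightarrow$\textrm{(ii)} this is immaterial.
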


\begin{proof}
The equivalence (i) $\Leftrightarrow$ (ii) is an immediate consequence of
Lemma~\ref{lem:hom.REP=RFC}. 

    (iii) $\Rightarrow$ (i). If (iii) holds then we can define
    $\mu(C,\tau) := C M e^{\omega \tau}$ to see that for all solutions we
    have $\|\phi(t,x,d)\|_X \leq \mu(\|x\|_X,t)$. Thus $\Sigma$ is RFC by
    Lemma~\ref{lem:RFC_criterion}.
   
   (ii) $\Rightarrow$ (iii). Fix $h>0$. Using
   Lemma~\ref{lem:0-REP_criterion} there exists a $\delta>0$ and a
   function $\tilde\mu(\cdot,h): [0,\delta) \to \R_+$ such that for all $x\in X$
   with $\|x\|_X = \delta/2$ we have
   \begin{equation}
       \label{eq:15}
      \| \Phi_d(t,0)x\|_{\mathcal{L}(X)} \leq \tilde\mu\left( \frac{\delta}{2},h\right) \quad
      \forall d\in\Dc, t\in [0,h].  
   \end{equation}
   By linearity we thus obtain for all $x\in X$ a bound of $\|
   \Phi_d(t,0)x\|$ which is uniform in $d\in \Dc$. An application of the Banach-Steinhaus theorem yields that
   there is a constant $\tilde M > 0$ such that 
   \begin{equation}
       \label{eq:16}
       \sup \left \{ \| \Phi_d(t,0) \|_{\mathcal{L}(X)} \ |\  t\in [0,h],
           d\in \Dc \right \} \leq \tilde M.
   \end{equation}
   Take an arbitrary $d \in \Dc$ and $t\geq 0$. Write $ t = k h + \tau$,
   where $\tau \in [0,h]$. For each $j= 1,\ldots,k$ the shift $d(\cdot +
   jh) \in \Dc$ so that we obtain
   \begin{equation}
       \label{eq:17}
      \| \Phi_d(t,0) \|_{\mathcal{L}(X)} \leq \|
      \Phi_{d(\cdot+kh)}(\tau,0)\|_{\mathcal{L}(X)} \prod_{j=0}^{k-1} \|
        \Phi_{d(\cdot+jh)}(h,0)\|_{\mathcal{L}(X)} \leq \tilde{M}^{k+1}.
   \end{equation}
   From the last inequality it is easy to arrive at the desired \eqref{eq:12}.
\end{proof}

\begin{remark}
    An immediate consequence of the characterization (iii) of
    the previous lemma is that for linear switched systems the 
    flow of $\Sigma$ is Lipschitz
    continuous if and only if the system is robustly forward complete. 
\end{remark}

For switched linear systems on Banach space we thus obtain the following
results
\begin{corollary}
    \label{c:scwitched_cor}
    Consider a switched linear system $\Sigma=(X,\Dc,\phi)$ as described by
    \eqref{eq:9}-\eqref{eq:11}. 
\begin{enumerate}[(i)]
  \item If there exists a non-coercive Lyapunov function $V$ for $\Sigma$
    then $0$ is uniformly globally weakly attractive.
  \item If $\Sigma$ is RFC, then the following two statements are
    equivalent
    \begin{enumerate}[(a)]
      \item there exists a non-coercive Lyapunov function $V$ for
        $\Sigma$.
      \item $0$ is uniformly globally asymptotically stable and hence
        exponentially stable.
    \end{enumerate}
\end{enumerate}
\end{corollary}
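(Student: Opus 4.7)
I plan to reduce directly to Proposition~\ref{prop:noncoeLT_plus_FC_implies_ULIM_v2}. The switched linear system is automatically forward complete, since for each $d\in\Dc$ and each $t\geq 0$ the operator $\Phi_d(t,0)$ in \eqref{eq:11} is a finite composition of $C_0$-semigroup evaluations and is therefore defined on all of $X$; thus $\phi(t,x,d)=\Phi_d(t,0)x$ exists globally. Proposition~\ref{prop:noncoeLT_plus_FC_implies_ULIM_v2} then immediately gives uniform global weak attractivity of $0$ from the existence of the non-coercive Lyapunov function~$V$.

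\textbf{Part (ii), (a) $\Rightarrow$ (b).} Under RFC, Lemma~\ref{lemma-switchRFC} yields that $0$ is a robust equilibrium, so together with $V$ the hypotheses of Theorem~\ref{t:noncoeLT} are met and $0$ is UGAS. To upgrade UGAS to exponential stability I would use homogeneity and the cocycle property. First I would apply $\phi(t,\lambda x,d)=\lambda\phi(t,x,d)$ to rewrite the resulting $\KL$-estimate as $\|\phi(t,x,d)\|_X\leq \beta(1,t)\|x\|_X$; then I would choose $T>0$ with $\beta(1,T)\leq 1/2$ and iterate the cocycle identity $\phi((n+1)T,x,d)=\phi(T,\phi(nT,x,d),d(nT+\cdot))$, noting that the shifted disturbance remains in $\Dc$ by the shift-invariance axiom, to obtain $\|\phi(nT,x,d)\|_X\leq 2^{-n}\|x\|_X$. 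Interpolating on the intervals $[nT,(n+1)T]$ with the uniform bound \eqref{eq:12} from Lemma~\ref{lemma-switchRFC}(iii) then produces the desired exponential estimate.

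\textbf{Part (ii), (b) $\Rightarrow$ (a), and the main obstacle.} UGAS trivially forces RFC, since setting $t=0$ in the $\KL$-bound shows that balls are mapped into bounded sets uniformly in $t$ and $d$. The remark following Lemma~\ref{lemma-switchRFC} therefore guarantees that the flow of $\Sigma$ is Lipschitz continuous on compact intervals, and my plan is simply to invoke the converse theorem of Section~\ref{sec:converse-theorems}, which produces a non-coercive (in fact Lipschitz continuous) Lyapunov function for any UGAS system whose flow is Lipschitz on compact intervals. I expect the most subtle step to be the exponential-stability upgrade in (a) $\Rightarrow$ (b): the shift-invariance axiom must be used to keep the iterated disturbances inside $\Dc$ when chaining the cocycle identity, and the growth estimate of Lemma~\ref{lemma-switchRFC}(iii) is needed to absorb the gaps between the sampling times $nT$ into the final exponential bound.
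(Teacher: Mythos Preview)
Your proposal is correct and follows essentially the same route as the paper: forward completeness plus Proposition~\ref{prop:noncoeLT_plus_FC_implies_ULIM_v2} for (i), Lemma~\ref{lemma-switchRFC} combined with Theorem~\ref{t:noncoeLT} for (a)$\Rightarrow$(b), and the converse Theorem~\ref{LipschitzConverseLyapunovTheorem-2} (via the Lipschitz-flow remark after Lemma~\ref{lemma-switchRFC}) for (b)$\Rightarrow$(a). The paper actually omits the ``hence exponentially stable'' argument entirely, so your homogeneity-plus-cocycle iteration is additional detail rather than a deviation, and it is correct as stated.
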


\begin{proof}
    It is clear that switched linear systems as described by
    \eqref{eq:9}-\eqref{eq:11} are forward complete. Then
    Proposition~\ref{prop:noncoeLT_plus_FC_implies_ULIM_v2} shows (i). The
    sufficiency part ``(a) $\Rightarrow$ (b)'' in (ii) follows from
    Theorem~\ref{t:noncoeLT} and Lemma~\ref{lemma-switchRFC}. Necessity is
    a consequence of Theorem~\ref{LipschitzConverseLyapunovTheorem-2}. 
\end{proof}

While item (i) of the previous result is new (to the best knowledge of the
authors), item (ii) recovers some of the results of
Theorem 3 in \cite{HaS11}.
We note that Remark 4 in \cite{HaS11} also shows that even for this system
class the assumption of robust forward completeness cannot be removed in
order to conclude uniform global asymptotic stability. This will be
discussed in the examples of Section~\ref{sec:examples}. A further
property is that the Lyapunov function may in fact be chosen to be a norm
on $X$ (or the square of a norm, which really makes no difference). In the
non-coercive case, this norm is of course not equivalent to the original
norm.

\subsection{Strongly continuous semigroups with admissible input operators}

Let $X$ be a Banach space and $A$ be the generator of a $C_0$-semigroup $T(\cdot)$ on
$X$. Consider the extrapolation space $X_{-1}$ defined as the completion
of $X$ with respect to the norm
\begin{equation}
    \label{eq:18}
    \|x \|_{-1} := \| (s I - A)^{-1} x\|_X\,,
\end{equation}
where $s$ is an arbitrary element of the resolvent set of $A$. In this
case $T(\cdot)$ extends to a $C_0$-semigroup on $X_{-1}$ which we denote by
the same symbol.
Consider a
linear operator $B \in \mathcal{L}(U,X_{-1})$, which may therefore be
unbounded as an operator from $U$ to $X$. Given the space of input
functions $L^p(\R_+,U)$, the operator $B$ is called $p$-admissible if for
all $t>0$ the
map
\begin{equation}
    \label{eq:19}
  G_t : u(\cdot) \mapsto \int_0^t T(t-s) u(s) ds   
\end{equation}
defines a bounded operator from $L^p(\R_+,U)$ to $X$. Note that the
integral is a priori defined in $X_{-1}$ and it is a requirement that the
integral yields a value in $X$. In the particular case that $p=\infty$ we
require the further condition that $\| G_t
\|_{\mathcal{L}(L^\infty(\R_+,U),X)} \to 0$ as $t \to + 0$. If this is the
case then $B$ is called zero-class admissible \cite{JPP09}. 

Consider a linear system 
\begin{eqnarray}
\dot x = Ax + Bu,
\label{eq:LinSys}
\end{eqnarray}
where $A$ is the generator of a $C_0$-semigroup over the Banach space $X$,
and $B$ is admissible for $A$.  It is known that if $B$ is $p$-admissible
for $1\leq p < \infty$ or $\infty$-admissible and zero-class, then the
solutions $\phi(\cdot,x,d)$ are continuous, due to an argument similar to
\cite[Proposition 2.3]{Wei89b}.

This system class gives rise to a system with disturbances if we consider
uncertain time-varying feedbacks. In this way we can study unbounded
perturbations of the generator $A$, see e.g. \cite{gallestey2000spectral}. To this
end consider a system of the form
\begin{equation}
    \label{eq:31}
    \dot x = Ax + B\Delta(t)Cx, 
\end{equation}
where $(A,B)$ is (for instance) $\infty$-admissible and zero-class, $C \in
\mathcal{L}(X,Y)$, $D\subset \mathcal{L}(Y,U)$ and the set of disturbances
satisfies
\begin{equation*}
    \mathcal{D} = \left \{ \Delta : [0,\infty) \to D \ |\  \Delta \text {
        is piecewise continuous } \right\}.   
\end{equation*}
We consider mild solutions to \eqref{eq:31}, i.e. solutions to the
integral equation
\begin{equation*}
    \phi(t,x_0,\Delta) = T(t) x_0 + \int_0^t T(t-s) B\Delta(s) C x(s) ds.
\end{equation*}
Conditions for the existence of such solutions are discussed in
\cite{hinrichsen1994robust,jacob1995infinite,chen2015time}. Here we will
assume that \eqref{eq:31} defines a system $\Sigma=(X,\Dc,\phi)$.

System \eqref{eq:31} is homogeneous, so that we obtain that provided the
equilibrium $x^*=0$ is robust, the existence of a non-coercive Lyapunov
function guarantees uniform global asymptotic stability of $0$.

\subsection{Nonlinear evolution equations in Banach spaces}

We consider nonlinear infinite-dimensional systems of the form
\begin{equation}
\label{InfiniteDim}
\dot{x}(t)=Ax(t)+g(x(t),d(t)), \ x(t) \in X,\ d(t) \in D,
\end{equation}
where $A$ generates the $C_0$-semigroup $T(\cdot)$ of bounded operators,
$X$ is a Banach space and $D$ is a nonempty subset of a normed linear
space.  As the space of admissible inputs we consider the
space ${\Dc}$ of globally bounded, piecewise continuous functions from
$\R_+$ to $D$.

\begin{Ass}
\label{Assumption1}
The function $g:X \times D \to X$ is Lipschitz continuous on bounded subsets of $X$, uniformly with respect to the second argument, i.e.
for all $C>0$ there exists $L_f(C)>0$, such that for all $x,y$ with
$\|x\|_X \leq C,\ \|y\|_X \leq C$ and $d \in D$ it holds that
\begin{eqnarray}
\|g(y,d)-g(x,d)\|_X \leq L_f(C) \|y-x\|_X.
\label{eq:Lipschitz}
\end{eqnarray}
Assume also that $g(x,\cdot)$ is continuous for all $x \in X$.
\end{Ass}

We consider mild solutions of \eqref{InfiniteDim}, i.e. solutions of the integral equation
\begin{equation}
\label{InfiniteDim_Integral_Form}
x(t)=T(t)x(0) + \int_0^t T(t-s)g(x(s),d(s))ds 
\end{equation}
belonging to the class $C([0,\tau],X)$ for certain $\tau>0$. 

It is well known that the system \eqref{InfiniteDim} is well-posed if Assumption~\ref{Assumption1} is satisfied. Moreover it satisfies all the axioms of the 
Definition~\ref{Steurungssystem}, possibly with exception of
forward completeness. Thus, \eqref{InfiniteDim} gives rise to a control system $\Sigma=(X,\Dc,\phi)$.

We are going to show that for system \eqref{InfiniteDim} some of the assumptions of Theorem~\ref{t:noncoeLT} are automatically satisfied. 
\begin{lemma}
\label{lem:Regularity}
Assume that
\begin{enumerate}
	\item[(i)] \eqref{InfiniteDim} is robustly forward complete,
	\item[(ii)] Assumption~\ref{Assumption1} holds.
\end{enumerate}
Then \eqref{InfiniteDim} has a flow which is Lipschitz continuous on compact intervals.
\end{lemma}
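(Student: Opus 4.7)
The plan is to use the mild solution formula \eqref{InfiniteDim_Integral_Form} and derive the Lipschitz estimate via a Grönwall-type argument, with robust forward completeness supplying the uniform a priori bound that converts the local Lipschitz assumption on $g$ into a uniform Lipschitz constant along all relevant trajectories.

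First, fix arbitrary $\tau > 0$ and $r > 0$. Since $T(\cdot)$ is a $C_0$-semigroup, the uniform boundedness principle gives a constant $M = M(\tau) \geq 1$ with $\|T(t)\|_{\mathcal{L}(X)} \leq M$ for all $t \in [0,\tau]$. Next, using robust forward completeness together with Lemma~\ref{lem:RFC_criterion}, there exists a continuous increasing $\mu$ such that the constant $R := \mu(r,\tau)$ satisfies $\|\phi(s,z,d)\|_X \leq R$ for all $z \in \overline{B_r}$, $s \in [0,\tau]$ and $d \in \Dc$. By Assumption~\ref{Assumption1}, choose $L := L_f(R)$ as the Lipschitz constant of $g(\cdot,d)$ on $\overline{B_R}$, valid uniformly in $d \in D$.

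Now pick $x,y \in \overline{B_r}$, $d \in \Dc$ and $t \in [0,\tau]$. Subtracting the integral equations for $\phi(\cdot,x,d)$ and $\phi(\cdot,y,d)$ and taking norms yields
\begin{equation*}
\|\phi(t,x,d)-\phi(t,y,d)\|_X \leq M\|x-y\|_X + ML\int_0^t \|\phi(s,x,d)-\phi(s,y,d)\|_X\, ds,
\end{equation*}
where the Lipschitz bound on $g$ is applicable because both trajectories stay in $\overline{B_R}$ on $[0,\tau]$ thanks to the RFC bound. Grönwall's inequality then yields
\begin{equation*}
\|\phi(t,x,d)-\phi(t,y,d)\|_X \leq M e^{ML\tau}\|x-y\|_X,
\end{equation*}
so that $L' := M e^{ML\tau}$ serves as the Lipschitz constant required in Definition~\ref{axiom:Lipschitz}.

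The only subtle point is to ensure that the Lipschitz bound for $g$ can be applied uniformly along trajectories starting in $\overline{B_r}$; this is precisely what robust forward completeness provides (as opposed to mere forward completeness, which would only give a pointwise, non-uniform bound). Everything else is routine: the semigroup bound on compact intervals is automatic, and the passage from the local Lipschitz property to a uniform Grönwall estimate is standard. No assumption on $g$ beyond local Lipschitz continuity uniform in $d$ is needed, so the argument works in the generality of Assumption~\ref{Assumption1}.
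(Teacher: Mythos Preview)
Your proof is correct and follows essentially the same approach as the paper: both use the mild-solution formula, invoke robust forward completeness to obtain a uniform a priori bound on trajectories, convert the local Lipschitz assumption on $g$ into a uniform constant, and finish with Gr\"onwall. The only cosmetic difference is that you bound $\|T(t)\|$ by a constant $M$ on $[0,\tau]$ and apply Gr\"onwall directly, whereas the paper uses the exponential bound $\|T(t)\|\leq M e^{\lambda t}$ and first performs the substitution $z_i(t)=e^{-\lambda t}x_i(t)$; your version is a harmless simplification since only $t\in[0,\tau]$ is needed.
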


\begin{proof}
Let $C>0$, $d \in \Dc$, and $x^0_1,x^0_2$ with  $\|x^0_i\|_X \leq C$, $i=1,2$. 
Denote $x_i(t):=\phi(t,x^0_i,d)$, $i=1,2$ be the solutions of
\eqref{InfiniteDim}  which are defined for $t\geq0$ since we assume that
\eqref{InfiniteDim} is forward complete.

There exist $M>0,\lambda \in\R$ such that  $\|T(t)\| \leq Me^{\lambda t}$
for all $t \geq 0$. Pick any $\tau >0$ and set 
\[
K(C,\tau):= \sup_{\|x\|_X\leq C, d\in \Dc, t \in [0,\tau]}\|\phi(t,x,d)\|_X,
\] 
which is finite due to assumption (i). For any $t \in [0,\tau]$ we have that
\begin{align*}
\|x_1(t) - x_2(t)\|_X &\leq \|T(t)\| \|x^0_1-x^0_2\|_X  \\
&\qquad\qquad + \int_0^t{\|T(t-r)\| \|g(x_1(r),d(r))-g(x_2(r),d(r))\|_Xdr} \\
&\leq Me^{\lambda t} \|x^0_1-x^0_2\|_X + \int_0^t Me^{\lambda (t-r)}L(K(C,\tau)) \|x_1(r)-x_2(r)\|_X dr.
\end{align*}
Now define $z_i(t):=e^{-\lambda t}x_i(t)$, $i=1,2$, $t \geq 0$.
Then
\begin{align*}
\|z_1(t) - z_2(t)\|_X \leq M \|x^0_1-x^0_2\|_X + M L(K(C,\tau)) \int_0^t{ \|z_1(r)-z_2(r)\|_X dr}. 
\end{align*}
According to Gronwall's inequality we obtain
\begin{align*}
\|z_1(t) - z_2(t)\|_X  \leq M \|x^0_1-x^0_2\|_X e^{ M L(K(C,\tau)) t} 
\end{align*}
or in the original variables, for $t\in[0,\tau]$
\begin{align*}
\|\phi(t,x^0_1,d) - \phi(t,x^0_2,d)\|_X  \leq  \|x^0_1-x^0_2\|_X e^{ (M L(K(C,\tau)) + \lambda) \tau}. 
\end{align*}
which proves the lemma. 
\end{proof}

Summarizing the results of Theorem~\ref{t:noncoeLT}, Lemma~\ref{lem:Regularity} and Lemma~\ref{lem:RobustEquilibriumPoint},
we obtain:
\begin{corollary}
Let Assumption~\ref{Assumption1} be satisfied. Let \eqref{InfiniteDim} be robustly forward complete and let $0$ be an equilibrium of \eqref{InfiniteDim}. If there exists a non-coercive Lyapunov function for \eqref{InfiniteDim}, then \eqref{InfiniteDim} is UGAS.
\end{corollary}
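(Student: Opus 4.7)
The plan is to simply chain together the three results cited just before the corollary's statement, since all of the machinery has already been developed. The corollary provides exactly the hypotheses needed to invoke Theorem~\ref{t:noncoeLT}, with only the robustness of the equilibrium missing from the assumptions; this gap is closed by combining Lemma~\ref{lem:Regularity} and Lemma~\ref{lem:RobustEquilibriumPoint}.

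More concretely, I would proceed as follows. First, note that Assumption~\ref{Assumption1} together with robust forward completeness of \eqref{InfiniteDim} are precisely the hypotheses of Lemma~\ref{lem:Regularity}, so I conclude that the flow of the system $\Sigma=(X,\Dc,\phi)$ associated to \eqref{InfiniteDim} is Lipschitz continuous on compact intervals in the sense of Definition~\ref{axiom:Lipschitz}. Next, since $0$ is an equilibrium of \eqref{InfiniteDim} by assumption, Lemma~\ref{lem:RobustEquilibriumPoint} applies and gives that $0$ is in fact a robust equilibrium point of $\Sigma$.

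At this stage every hypothesis of Theorem~\ref{t:noncoeLT} is in place: $\Sigma$ is robustly forward complete, $0$ is a robust equilibrium, and a non-coercive Lyapunov function exists by assumption. Invoking that theorem directly yields that $\Sigmafp$ is UGAS, which is the desired conclusion.

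There is no real obstacle here since the work has been done upstream; the only thing to verify is the modest bookkeeping that \eqref{InfiniteDim} indeed gives rise to a dynamical system in the sense of Definition~\ref{Steurungssystem}, which is precisely the content of the discussion preceding Lemma~\ref{lem:Regularity} (well-posedness of mild solutions under Assumption~\ref{Assumption1} plus the standing RFC hypothesis). Thus the proof reduces to the single sentence: apply Lemma~\ref{lem:Regularity}, then Lemma~\ref{lem:RobustEquilibriumPoint}, then Theorem~\ref{t:noncoeLT}.
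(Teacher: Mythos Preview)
Your proposal is correct and follows exactly the same route as the paper: apply Lemma~\ref{lem:Regularity} (using Assumption~\ref{Assumption1} and RFC) to get Lipschitz continuity of the flow, then Lemma~\ref{lem:RobustEquilibriumPoint} to upgrade the equilibrium to a robust one, and finally invoke Theorem~\ref{t:noncoeLT}. There is nothing to add.
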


\begin{proof}
Lemma~\ref{lem:Regularity} and RFC property of \eqref{InfiniteDim} imply that the flow of \eqref{InfiniteDim} is Lipschitz continuous on compact intervals. Next Lemma~\ref{lem:RobustEquilibriumPoint} implies that $0$ is a robust equilibrium point of \eqref{InfiniteDim}. Finally, Theorem~\ref{t:noncoeLT} shows that \eqref{InfiniteDim} is UGAS.
\end{proof}

\section{Converse theorems}
\label{sec:converse-theorems}

In this section we consider two constructions of Lyapunov functions for
UGAS systems $\Sigma=(X,\Dc,\phi)$. First we recall a construction of
Lipschitz continuous coercive Lyapunov functions, due to \cite{KaJ11b},
and then we use these ideas in order to derive another construction of a
non-coercive Lyapunov function for a system with a UGAS
equilibrium.
The construction requires Lipschitz continuity of the flow -  a
condition we have hardly used so far. For a detailed discussion of the
properties required of the set of solutions of a system to obtain a
converse Lyapunov theorem yielding continuous Lyapunov functions we refer to \cite{Sch15}.

Both of the constructions exploit Sontag's $\KL$-lemma \cite[Proposition
7]{Son98}, which can be stated as follows:
\begin{lemma}
\label{Sontags_KL_Lemma}
For each $\beta \in \KL$ there exist $\alpha_1,\alpha_2 \in \Kinf$ such
that 
\begin{equation}
\beta(r,t) \leq \alpha_2(\alpha_1(r)e^{-t}) \quad \forall r \geq 0, \; \forall t \geq 0.
\label{eq:KL-Lemma_Estimate}
\end{equation}
\end{lemma}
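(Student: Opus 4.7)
The plan is to prove Sontag's classical $\KL$-lemma by a two-stage reduction. Stage one establishes a separated majorant of $\beta$ in product form $\omega(r)\nu(t)$ with $\omega\in\Kinf$ and $\nu\in\LL$; stage two recasts this into the compositional exponential form claimed by the lemma. Both stages must be kept in mind simultaneously, since the success of stage two depends sensitively on how stage one is carried out.

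\textbf{Stage 1 (Separation).} The idea is to exploit the $\LL$-structure of $\beta(n,\cdot)$ for each positive integer $n$ via a countable construction. A natural choice is
\begin{equation*}
\nu(t):=\sum_{n=1}^{\infty}2^{-n}\,\frac{\beta(n,t)}{\beta(n,0)}.
\end{equation*}
The series converges uniformly since each summand is bounded by $2^{-n}$, hence $\nu$ is continuous and non-increasing on $\R_+$; by dominated convergence $\nu(t)\to 0$ as $t\to\infty$, so $\nu\in\LL$ with $\nu(0)=1$. By construction $\beta(n,t)\leq 2^{n}\beta(n,0)\,\nu(t)$ for every integer $n\geq 1$. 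Interpolating the sequence $\{2^{n+2}\beta(n+1,0)\}_{n\geq 0}$ continuously into a $\Kinf$-function $\omega$ (and using monotonicity of $\beta(\cdot,t)$) yields the separated bound $\beta(r,t)\leq\omega(r)\nu(t)$ for all $r,t\geq 0$.

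\textbf{Stage 2 (Exponential recasting).} Since $\nu\in\LL$ with $\nu(0)=1$, the function $\rho(t):=-\ln\nu(t)$ is continuous and strictly increasing with $\rho(0)=0$ and $\rho(t)\to\infty$, so $\rho\in\Kinf$. Consequently $\nu(t)=e^{-\rho(t)}$ and the separated bound sharpens to $\beta(r,t)\leq\omega(r)e^{-\rho(t)}$. To fold this into a composition $\alpha_2(\alpha_1(r)e^{-t})$, I would choose $\alpha_1\in\Kinf$ fast-growing enough to encode both the amplitude $\omega(r)$ and the time-rescaling $\rho$ — for example $\alpha_1(r):=e^{\rho^{-1}(\ln(\omega(r)+1))}$ — so that $\alpha_1(r)e^{-t}$ carries all the information present in the pair $(\omega(r),\rho(t))$. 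The outer function $\alpha_2\in\Kinf$ is then tailored to match the resulting profile, essentially $\alpha_2(u)\sim e^{\rho(\ln u)}$ for $u$ large and $\alpha_2(u)\sim u$ for $u$ small, smoothed at $u=1$.

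\textbf{Main obstacle.} The principal difficulty is the compatibility between the abstract time-rescaling $\rho$ and the required compositional form. Absorbing $\omega(r)e^{-\rho(t)}$ into $\alpha_2(\alpha_1(r)e^{-t})$ would be immediate if $\rho$ were sub-additive (which is automatic for concave $\rho$) but may fail in general. The resolution exploits the unbounded freedom in choosing $\alpha_1\in\Kinf$: by letting $\alpha_1$ grow fast enough (potentially doubly exponentially in $\omega$), the discrepancy between $\rho$ and the identity can be absorbed into $\alpha_1$, leaving $\alpha_2$ with only a routine compositional bound to satisfy. The final verification reduces to a case split depending on whether $t$ is smaller or larger than $\rho^{-1}(\ln\omega(r))$, each case being a direct computation.
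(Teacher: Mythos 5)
First, a remark on context: the paper does not prove this lemma; it simply quotes it from Sontag (cited as \cite[Proposition 7]{Son98}), so there is no proof in the paper against which to compare.

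Your Stage~1 contains a genuine gap. The function $\nu(t):=\sum_{n\geq 1}2^{-n}\beta(n,t)/\beta(n,0)$ only samples $\beta$ at integer values of $r$, and this forces $\nu$ to decay at least as fast as the slowest of the decays $\beta(n,\cdot)/\beta(n,0)$ for $n\geq 1$. For $r\in(0,1)$ the decay of $\beta(r,\cdot)$ may be strictly slower than all of these, and then no $\omega\in\Kinf$ can satisfy $\beta(r,t)\leq\omega(r)\nu(t)$. Concretely, take $\beta(r,t)=\frac{r}{1+rt}$, which is readily checked to lie in $\KL$. Then $\beta(n,t)/\beta(n,0)=\frac{1}{1+nt}$, so
\[
\nu(t)=\sum_{n\geq 1}\frac{2^{-n}}{1+nt}\leq\frac{1}{t}\sum_{n\geq1}\frac{2^{-n}}{n}=\frac{\ln 2}{t}\quad\text{for } t>0,
\]
while for any fixed $r\in(0,1)$ and $t\geq 1/r$ we have $\beta(r,t)=\frac{r}{1+rt}\geq\frac{1}{2t}$. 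Hence $\omega(r)\geq\beta(r,t)/\nu(t)\geq\frac{1}{2\ln 2}$ for every $r\in(0,1)$, which is incompatible with $\omega(0^+)=0$. So the separated bound you assert at the end of Stage~1 cannot hold with $\omega\in\Kinf$ for this $\beta$ and this $\nu$. (A product majorant $\beta\leq\omega\nu$ with $\omega\in\Kinf$ does exist for this particular $\beta$, e.g.\ $\omega(r)=\max(\sqrt r,r)$ with a suitably chosen $\nu$, but it must be built from information about $\beta(r,\cdot)$ for \emph{all} small $r$, not just integer $r$.) The remaining index mismatch in ``$\{2^{n+2}\beta(n+1,0)\}_{n\geq0}$'' is symptomatic of the same problem: interpolating that sequence to a $\Kinf$ function forces $\omega$ either to be positive at $0$ or to drop below $2\beta(1,0)$ on $(0,1)$, and in the latter case the bound breaks.

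Stage~2 is also not complete as written — for instance $\alpha_1(r)=e^{\rho^{-1}(\ln(\omega(r)+1))}$ satisfies $\alpha_1(0)=1\neq0$, so it is not a $\Kinf$ function — but you acknowledge this part as a sketch. The substantive missing idea is in Stage~1: a countable ``sampling'' of $\beta$ can work, but the sample points must accumulate at $0$ and the weights must be matched to the rate at which $\beta(r_k,0)\to0$, so that the resulting $\omega$ remains in $\Kinf$. As written, your construction does not do this, and the lemma (although true) is not proved by your argument.
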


We will need several technical lemmas, which are well-known. 
\begin{lemma}
\label{lem:SimpleIneqLemma}
Let $f,g:\Dc \to \R_+$ be maps such that  $\sup_{d \in \Dc}f(d)$ and $\sup_{d \in \Dc}g(d)$ are finite.
Then
\begin{equation}
\sup_{d \in \Dc}f(d) - \sup_{d \in \Dc}g(d) \leq \sup_{d \in \Dc}\big(f(d) - g(d)\big).
\label{eq:SimpleIneqLemma}
\end{equation}
\end{lemma}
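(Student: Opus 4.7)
The plan is to establish the inequality by a direct pointwise estimate followed by taking suprema. The core trick is the algebraic identity $f(d) = (f(d) - g(d)) + g(d)$, which lets us split the quantity of interest so that both pieces can be controlled by suprema over $\Dc$.

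More concretely, I would fix an arbitrary $d_0 \in \Dc$ and write
\begin{equation*}
f(d_0) = \big(f(d_0) - g(d_0)\big) + g(d_0) \leq \sup_{d \in \Dc}\big(f(d) - g(d)\big) + \sup_{d \in \Dc} g(d).
\end{equation*}
Because the right-hand side does not depend on $d_0$, I can take the supremum over $d_0 \in \Dc$ on the left to obtain
\begin{equation*}
\sup_{d_0 \in \Dc} f(d_0) \leq \sup_{d \in \Dc}\big(f(d) - g(d)\big) + \sup_{d \in \Dc} g(d).
\end{equation*}
Since both $\sup_d f(d)$ and $\sup_d g(d)$ are finite by assumption, the term $\sup_{d \in \Dc} g(d)$ may be subtracted on both sides without any indeterminate form, yielding \eqref{eq:SimpleIneqLemma}.

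There is essentially no obstacle here. The only subtlety worth mentioning is that the hypothesis of finiteness of both suprema is used precisely to justify the final subtraction; without it the expression $\sup f - \sup g$ could be of the form $\infty - \infty$. Everything else is a one-line rearrangement, so the proof is a couple of sentences at most.
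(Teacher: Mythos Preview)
Your proof is correct and is the standard one-line argument. The paper does not actually prove this lemma; it simply states it as a well-known technical fact and only remarks afterward that the right-hand side of \eqref{eq:SimpleIneqLemma} need not be finite.
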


We note that Lemma~\ref{lem:SimpleIneqLemma} does not claim that the
expression on the right in \eqref{eq:SimpleIneqLemma} is finite.
The following lemma is taken from \cite[p.130]{KaJ11b}.
\begin{lemma}
\label{lem:KinfLipschitzLowerEstimate}
For any $\alpha \in \Kinf$ there exist $\rho \in \Kinf$ so that $\rho(s) \leq \alpha(s)$ for all $s \in \R_+$ and $\rho$ is globally Lipschitz with a unit Lipschitz constant, i.e. for any $s_1,s_2 \geq 0$ it holds that 
\begin{equation}
|\rho(s_1) - \rho(s_2)| \leq |s_1 - s_2|.
\label{eq:Lipschitz_Kinf}
\end{equation}
\end{lemma}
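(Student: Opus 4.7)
The plan is to construct $\rho$ as the infimal convolution of $\alpha$ with the identity on $\R_+$:
$$\rho(s) := \inf_{t \geq 0}\bigl\{\alpha(t) + |s - t|\bigr\}, \quad s \geq 0.$$
With this definition, the bound $\rho(s) \leq \alpha(s)$ is immediate by taking $t = s$ in the infimum, and $\rho(0) = 0$ follows by taking $t = 0$ together with the nonnegativity of $\alpha(t) + t$.

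For the 1-Lipschitz estimate \eqref{eq:Lipschitz_Kinf}, given any $s_1, s_2 \geq 0$, the triangle inequality $|s_1 - t| \leq |s_2 - t| + |s_1 - s_2|$ valid for every $t \geq 0$ yields
$$\rho(s_1) \leq \alpha(t) + |s_2 - t| + |s_1 - s_2|;$$
taking the infimum over $t$ gives $\rho(s_1) - \rho(s_2) \leq |s_1 - s_2|$, and symmetry in $s_1, s_2$ completes the argument. Radial unboundedness of $\rho$ then follows because, for any $M > 0$, picking $\tau$ with $\alpha(\tau) \geq M$ forces $\alpha(t) + |s-t| \geq M$ either via $\alpha(t) \geq M$ (when $t \geq \tau$) or via $|s - t| \geq M$ (when $t \leq \tau$ and $s \geq \tau + M$); hence $\rho(s) \geq M$ for $s \geq \tau + M$.

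The most delicate point, and what I expect to be the main obstacle, is strict monotonicity. Note first that for $t \geq s$ the candidate $\alpha(t) + (t - s)$ is at least $\alpha(s)$, so the infimum is attained in $t \in [0, s]$ and we may write $\rho(s) = s + m(s)$ with $m(s) := \min_{t \in [0,s]}(\alpha(t) - t)$. The map $m$ is clearly nonincreasing since $[0,s_1]\subset [0,s_2]$ when $s_1<s_2$. Given $s_1 < s_2$, if the minimum defining $m(s_2)$ is attained in $[0, s_1]$ then $m(s_1) = m(s_2)$ and hence $\rho(s_2) - \rho(s_1) = s_2 - s_1 > 0$; otherwise the minimum is attained at some $t^* \in (s_1, s_2]$, and using $t = s_1$ as a candidate for $m(s_1)$ one estimates
$$m(s_1) - m(s_2) \leq \bigl(\alpha(s_1) - \alpha(t^*)\bigr) + (t^* - s_1) < s_2 - s_1,$$
where the strict inequality uses $\alpha(t^*) > \alpha(s_1)$ (strict monotonicity of $\alpha\in\Kinf$) together with $t^* - s_1 \leq s_2 - s_1$. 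Thus $\rho(s_2) > \rho(s_1)$ in both cases, so $\rho \in \Kinf$ as required.
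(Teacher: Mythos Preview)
Your proof is correct. The paper does not actually supply a proof of this lemma; it simply cites \cite[p.~130]{KaJ11b} and moves on. Your infimal-convolution construction $\rho(s) = \inf_{t\geq 0}\{\alpha(t) + |s-t|\}$ is a clean, self-contained argument: the bound $\rho\leq\alpha$, the value $\rho(0)=0$, and the $1$-Lipschitz estimate are immediate, and your reduction to $\rho(s) = s + m(s)$ with $m(s) = \min_{t\in[0,s]}(\alpha(t)-t)$ handles strict monotonicity carefully via the two-case split. The radial unboundedness argument is also fine. So there is nothing to compare against in the paper itself, and your route stands on its own.
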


We will also need the functions $G_k:\R_+ \to \R_+$ defined,
for $k \in \N$, by
\[
G_k(r):=\max\{r-\frac{1}{k},0 \}, \quad r \geq 0.
\]
\begin{lemma}
\label{Converse_Lyapunov_Theorem_SimpleEstimate}
For each $k \in \N$ the function $G_k$ is Lipschitz continuous with a unit Lipschitz constant, i.e., for all $r_1,r_2 \geq 0$ it holds that 
\begin{equation}
\big|G_k(r_1) - G_k(r_2)\big| \leq |r_1-r_2|.
\label{eq:CLT_Simp_Est}
\end{equation}
\end{lemma}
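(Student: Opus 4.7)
The plan is to verify the inequality by a straightforward case analysis, which is the most transparent route for such an elementary Lipschitz bound. Since $G_k(r)=\max\{r-\tfrac{1}{k},0\}$, the behavior of $G_k$ splits naturally according to whether its argument lies below or above the threshold $\tfrac{1}{k}$.

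First, I would assume without loss of generality that $r_1\leq r_2$, so that $|r_1-r_2|=r_2-r_1$, and likewise observe that $G_k$ is nondecreasing, which gives $G_k(r_1)\leq G_k(r_2)$ and reduces \eqref{eq:CLT_Simp_Est} to $G_k(r_2)-G_k(r_1)\leq r_2-r_1$. Then I would split into three cases: (a) both $r_1,r_2\leq \tfrac{1}{k}$, in which case both values of $G_k$ vanish and the inequality is trivial; (b) both $r_1,r_2>\tfrac{1}{k}$, in which case $G_k(r_i)=r_i-\tfrac{1}{k}$ and the difference equals $r_2-r_1$ exactly; (c) $r_1\leq \tfrac{1}{k}<r_2$, in which case $G_k(r_1)=0$ and $G_k(r_2)=r_2-\tfrac{1}{k}\leq r_2-r_1$ because $r_1\leq\tfrac{1}{k}$.

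There is really no obstacle here: the statement is an elementary property of the positive-part operation. One could equally well observe that $G_k$ is the composition of the translation $r\mapsto r-\tfrac{1}{k}$ (an isometry) with $s\mapsto\max\{s,0\}$, and that the latter is 1-Lipschitz because for any reals $a,b$ the elementary bound $|\max\{a,0\}-\max\{b,0\}|\leq|a-b|$ holds by examining the signs of $a,b$. I would present whichever of these two arguments is shorter, probably the three-case analysis, since it keeps the proof fully self-contained within one or two lines.
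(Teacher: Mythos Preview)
Your proof is correct. The paper's own proof is even terser: it simply observes that $G_k$ is the pointwise maximum of the two functions $r\mapsto r-\tfrac{1}{k}$ and $r\mapsto 0$, each of which is Lipschitz with constant at most $1$, and invokes the general fact that the maximum of finitely many $L$-Lipschitz functions is $L$-Lipschitz. Your explicit three-case analysis unpacks exactly this fact in the special situation at hand, so the two arguments amount to the same thing; the paper just cites the abstract principle while you verify it directly. Your alternative remark about composing a translation with the positive-part map is essentially the paper's argument phrased slightly differently.
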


\begin{proof}
This follows as each $G_k$ is the maximum of two Lipschitz continuous
functions with Lipschitz constant (at most) $1$.
\end{proof}

The following theorem has been shown in \cite[Section 3.4]{KaJ11b}.
\begin{theorem}
\label{LipschitzConverseLyapunovTheorem-1}
Consider a system $\Sigma= (X,\Dc,\phi)$  with a flow, which is Lipschitz continuous on compact intervals.
If $\Sigmafp$ is UGAS, then for any $\eta \in (0,1)$ there exists a sequence
of positive real numbers $\{ a_k \}_{k\in\N}$ so that $W^\eta:X \to \R_+$, defined by
\begin{eqnarray}
W^\eta (x) := \sum_{k=1}^{\infty} a_k V^\eta_k (x) \qquad \forall x \in X,
\label{eq:LF_ConverseLyapTheorem_maxType_W-LF}
\end{eqnarray}
with 
\begin{eqnarray}
V^\eta_k (x) := \sup_{d \in \Dc} \max_{s \in [0,+\infty)}e^{\eta s} G_k\big(\rho( \| \phi(s,x,d)\|_X)\big).
\label{eq:LF_ConverseLyapTheorem_maxType_modified_Def2}
\end{eqnarray}
is a coercive Lyapunov function for $\Sigma$ which is Lipschitz continuous on bounded balls.
\end{theorem}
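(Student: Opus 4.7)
The plan is to verify in turn that each summand $V^\eta_k$ is well-defined, Lipschitz on bounded balls, dissipative with rate $-\eta$, and admits uniform upper and lower bounds; then to choose the weights $a_k$ small enough so that both the values and the Lipschitz constants summed on each ball remain finite, and so that the accumulated lower bounds yield a $\Kinf$-estimate from below.

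First I would apply Lemma~\ref{Sontags_KL_Lemma} to the $\KL$-estimate coming from UGAS to obtain $\alpha_1,\alpha_2\in\Kinf$ with $\|\phi(s,x,d)\|_X\leq \alpha_2(\alpha_1(\|x\|_X)e^{-s})$ for all admissible triples. Then using Lemma~\ref{lem:KinfLipschitzLowerEstimate} I would pick $\rho\in\Kinf$ with unit Lipschitz constant and $\rho(r)\leq \alpha_2^{-1}(r)$ for all $r\geq 0$; this yields the master bound
\begin{equation*}
\rho(\|\phi(s,x,d)\|_X) \leq \alpha_1(\|x\|_X)\,e^{-s},
\end{equation*}
from which $e^{\eta s}G_k(\rho(\|\phi(s,x,d)\|_X))\leq \alpha_1(\|x\|_X)e^{(\eta-1)s}$, so the sup in the definition of $V^\eta_k$ is finite and bounded by $\alpha_1(\|x\|_X)$. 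Moreover, this bound shows that for any $R>0$ and any $\|x\|_X\leq R$ the sup is attained (or approached) on a compact time interval $[0,T_k(R)]$ with $T_k(R)$ independent of $x,d$, because for $s>T_k(R)$ the argument of $G_k$ falls below $1/k$ and thus contributes $0$.

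Next, the dissipation estimate is the key use of the cocycle axiom: using ($\Sigma$\ref{axiom:Cocycle}) together with the concatenation axiom of $\Dc$, every $\phi(s,\phi(t,x,d),\tilde d)$ can be written as $\phi(s+t,x,\bar d)$ for some $\bar d\in\Dc$, and ranging $\tilde d$ over $\Dc$ and $s\geq 0$ the resulting set of $(\bar d,\tau)$ with $\tau=s+t$ is contained in $\Dc\times[t,\infty)$. Factoring out $e^{-\eta t}$ yields the semigroup-like inequality
\begin{equation*}
V^\eta_k(\phi(t,x,d))\leq e^{-\eta t}\,V^\eta_k(x),
\end{equation*}
which on passing to the Dini derivative at $t=0$ gives $\dot V^\eta_{k,d}(x)\leq -\eta V^\eta_k(x)$. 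For Lipschitz continuity on $\overline{B_R}$ I would use Lemma~\ref{lem:SimpleIneqLemma} to pass the difference $V^\eta_k(x)-V^\eta_k(y)$ inside the supremum, combine the unit Lipschitz constants of $\rho$ and $G_k$ (Lemma~\ref{Converse_Lyapunov_Theorem_SimpleEstimate}) with the Lipschitz constant $L=L(R,T_k(R))$ of the flow from Definition~\ref{axiom:Lipschitz}, and reduce the time-sup to $[0,T_k(R)]$ using the cutoff observation above. This gives $|V^\eta_k(x)-V^\eta_k(y)|\leq e^{\eta T_k(R)}L(R,T_k(R))\,\|x-y\|_X=:L^\eta_k(R)\,\|x-y\|_X$.

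Finally I would choose the weights. A convenient choice is $a_k:=2^{-k}/(1+L^\eta_k(k))$: this ensures that on each $\overline{B_R}$ both $\sum a_k V^\eta_k(x)\leq \sum a_k \alpha_1(R)$ and $\sum a_k L^\eta_k(R)\leq \sum_{k\leq R}a_k L^\eta_k(R)+\sum_{k>R}2^{-k}$ are finite, so $W^\eta$ is well-defined, Lipschitz on bounded balls, and inherits from each $V^\eta_k$ the bound $\dot W^\eta_d(x)\leq -\eta W^\eta(x)$. The upper bound $W^\eta(x)\leq \psi_2(\|x\|_X):=(\sum a_k)\alpha_1(\|x\|_X)$ is immediate. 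For the coercive lower bound I would set $s=0$ inside $V^\eta_k$ to get $V^\eta_k(x)\geq G_k(\rho(\|x\|_X))$, hence
\begin{equation*}
W^\eta(x)\geq \sum_{k=1}^\infty a_k\,G_k(\rho(\|x\|_X))=:\psi_1(\|x\|_X),
\end{equation*}
and it is a standard verification that $\psi_1\in\Kinf$ since, for every $r>0$, infinitely many $G_k(\rho(r))>0$, $\psi_1$ is continuous and non-decreasing, $\psi_1(0)=0$, and $\psi_1(r)\to\infty$ as $r\to\infty$ because $\sum a_k G_k(\rho(r))\geq \rho(r)\sum_{k\leq K}a_k - \sum a_k/k$ for large $K$. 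The main obstacle is the bookkeeping in step~4 (reducing the sup to a compact interval so that the finite-time Lipschitz constant of the flow applies and the series of Lipschitz constants converges); once this is done, the dissipation via the cocycle identity and the coercivity via the $s=0$ evaluation are comparatively routine.
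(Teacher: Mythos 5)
Your proposal is correct and follows the standard approach that the paper itself uses for the analogous integral-based Theorem~\ref{LipschitzConverseLyapunovTheorem-2}, which is the same method as in the cited reference \cite[Section 3.4]{KaJ11b} (the paper does not reprove Theorem~\ref{LipschitzConverseLyapunovTheorem-1} but only refers to it). In particular, you correctly identify all the key mechanisms: Sontag's $\KL$-lemma plus the unit-Lipschitz $\rho\leq\alpha_2^{-1}$ yield the master decay bound $\rho(\|\phi(s,x,d)\|_X)\leq\alpha_1(\|x\|_X)e^{-s}$; the concatenation and cocycle axioms yield $V^\eta_k(\phi(t,x,d))\leq e^{-\eta t}V^\eta_k(x)$; the truncation to $[0,T_k(R)]$ combined with the flow's compact-interval Lipschitz constant gives Lipschitz continuity of $V^\eta_k$ on balls; and the weights $a_k$ are chosen to control both the series of values and the series of Lipschitz constants. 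You also correctly isolate the feature that makes this construction \emph{coercive} in contrast to the non-coercive integral construction of Theorem~\ref{LipschitzConverseLyapunovTheorem-2}: evaluating the inner maximum at $s=0$ gives $V^\eta_k(x)\geq G_k(\rho(\|x\|_X))$, which after summation produces the $\Kinf$ lower bound $\psi_1$.
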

The proof of this result is based on the earlier local converse Lyapunov theorems, see e.g. \cite[Theorem 19.3]{Yos66}, \cite[Theorem 4.2.1]{Hen81} and on using Sontag's $\KL$-Lemma (Lemma~\ref{Sontags_KL_Lemma}).

Next, exploiting ideas from \cite[Section 3.4]{KaJ11b} we construct a Lipschitz continuous non-coercive Lyapunov function using integration instead of maximization.
\begin{theorem}
\label{LipschitzConverseLyapunovTheorem-2}
Consider a forward complete 
system $\Sigma= (X,\Dc,\phi)$ with a flow, which is Lipschitz continuous on compact intervals.
If $\Sigmafp$ is UGAS, then there exists a non-coercive Lyapunov function for $\Sigma$ which is Lipschitz continuous on bounded balls.
\end{theorem}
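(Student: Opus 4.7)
The plan is to adapt the coercive construction of Theorem~\ref{LipschitzConverseLyapunovTheorem-1}, replacing the inner maximum over time by an integral. Concretely, I would fix $\eta\in(0,1)$, apply Sontag's $\KL$-lemma (Lemma~\ref{Sontags_KL_Lemma}) to the $\KL$-function $\beta$ from the UGAS estimate to produce $\alpha_1,\alpha_2\in\Kinf$ with $\|\phi(t,x,d)\|_X\leq\alpha_2(\alpha_1(\|x\|_X)e^{-t})$, and invoke Lemma~\ref{lem:KinfLipschitzLowerEstimate} to pick $\rho\in\Kinf$ with unit Lipschitz constant satisfying $\rho\leq\alpha_2^{-1}$; this immediately yields $\rho(\|\phi(s,x,d)\|_X)\leq\alpha_1(\|x\|_X)e^{-s}$. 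The candidate Lyapunov function is then
\[
V(x):=\sum_{k=1}^{\infty} a_k W_k(x),\qquad W_k(x):=\sup_{d\in\Dc}\int_{0}^{\infty} e^{\eta s}\,G_k\bigl(\rho(\|\phi(s,x,d)\|_X)\bigr)\,ds,
\]
where the positive summable weights $\{a_k\}$ are to be fixed below, so that the non-coercive upper bound $V(x)\leq\frac{\sum_k a_k}{1-\eta}\alpha_1(\|x\|_X)=:\psi_2(\|x\|_X)\in\Kinf$ is automatic.

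Positivity on $X\setminus\{0\}$ and the Dini-derivative estimate would then be handled by arguments analogous to the proof of Theorem~\ref{t:noncoeLT}. Continuity of $s\mapsto\phi(s,x,d)$ with $\phi(0,x,d)=x\neq 0$ ensures that for any fixed $d$ and $k$ large enough one has $\rho(\|\phi(s,x,d)\|_X)>1/k$ on a neighbourhood of $s=0$, so $W_k(x)>0$ and $V(x)>0$. For the decay estimate, the axioms of concatenation and cocycle give, as in the derivation of \eqref{eq:Vintbound},
\[
W_k(\phi(t,x,d))\leq e^{-\eta t}\Bigl[W_k(x)-\int_{0}^{t} e^{\eta u}\,G_k\bigl(\rho(\|\phi(u,x,d)\|_X)\bigr)\,du\Bigr];
\]
multiplying by $a_k$, summing (monotone convergence exchanges sum and integral), and passing to the Dini derivative at $t=0^+$ produces $\dot V_d(x)\leq -\eta V(x)-\alpha(\|x\|_X)$, with $\alpha(r):=\sum_k a_k G_k(\rho(r))\in\K$.

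The main obstacle is the Lipschitz continuity on bounded balls. For $x,y\in\overline{B_r}$, UGAS keeps the trajectories inside $\overline{B_{r'}}$ with $r':=\beta(r,0)$, while the clipping $G_k$ forces the integrand to vanish as soon as $\rho(\|\phi(s,x,d)\|_X)\leq 1/k$, which by the exponential decay happens for $s\geq T_k(r):=\log(k\alpha_1(r))$ once $k\alpha_1(r)>1$. Combining Lemma~\ref{lem:SimpleIneqLemma}, the unit Lipschitz constants of $\rho$ and $G_k$ from Lemmas~\ref{lem:KinfLipschitzLowerEstimate} and \ref{Converse_Lyapunov_Theorem_SimpleEstimate}, and the flow Lipschitz property of Definition~\ref{axiom:Lipschitz} should give
\[
|W_k(x)-W_k(y)|\leq C_k(r)\|x-y\|_X,\quad C_k(r):=L_{T_k(r),r'}\cdot\tfrac{e^{\eta T_k(r)}-1}{\eta},
\]
with $L_{\tau,r'}$ the flow Lipschitz constant on $[0,\tau]\times\overline{B_{r'}}$. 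The hard part is then to choose $\{a_k\}$ so that simultaneously $\sum_k a_k<\infty$ (needed for the upper bound) and $\sum_k a_k C_k(r)<\infty$ for every $r>0$ (needed for Lipschitz continuity on each bounded ball); a diagonal selection such as $a_k:=2^{-k}/(1+\sup_{\tilde r\leq k}C_k(\tilde r))$ should suffice, because for each fixed $r$ all but finitely many terms are dominated by $2^{-k}$.
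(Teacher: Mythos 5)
Your proposal is correct and follows essentially the same route as the paper's proof: one defines $V_k$ by taking the supremum over disturbances of a time-integral of the cut-off function $G_k\circ\rho$ composed with the flow, establishes positivity, an upper bound via Sontag's $\KL$-lemma, the decay estimate by causality and the cocycle axiom, and Lipschitz continuity via the finite truncation time $T(R,k)$, and finally aggregates with a diagonal choice of summable weights $a_k$. The only deviation is that you retain the exponential weight $e^{\eta s}$ inside the integral (borrowed from the max-type construction of Theorem~\ref{LipschitzConverseLyapunovTheorem-1}), whereas the paper drops this factor in the integral construction; this is a cosmetic difference that yields the slightly stronger decay $\dot{V}_d(x)\leq-\eta V(x)-\alpha(\|x\|_X)$ but is otherwise inessential.
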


\begin{proof}
Let the system $\Sigma=(X,\Dc,\phi)$  be UGAS in $x^*=0$. Then there exists a $\beta \in \KL$, so that \eqref{UGAS_wrt_D_estimate} holds for all $x \in X$, all $d \in \Dc$ and all $t \geq 0$.
Due to Lemma~\ref{Sontags_KL_Lemma} there exist $\alpha_1,\alpha_2 \in \Kinf$ so that \eqref{eq:KL-Lemma_Estimate} holds.

Pick any Lipschitz continuous function $\rho \in \Kinf$ with a unit
Lipschitz constant so that $\rho(r) \leq \alpha_2^{-1}(r)$, $r \in
\R_+$. This is possible due to
Lemma~\ref{lem:KinfLipschitzLowerEstimate}. For any $k \in \N$ define
$G_k(r):=\max\{r-\frac{1}{k},0 \}$ and consider the function
\begin{eqnarray}
V_k(x)= \sup_{d \in \Dc} \int_0^{\infty} G_k\big(\rho\big(\|\phi(t,x,d)\|_X\big) \big) dt.
\label{eq:LF_ConverseLyapTheorem_With_Disturbances_Gk}
\end{eqnarray}

For all $x \in X$ it holds that
\begin{eqnarray*}
V_k(x)&\leq& \sup_{d \in \Dc} \int_0^{\infty} \rho\big(\|\phi(t,x,d)\|_X\big) dt \\
&\leq & \int_0^{\infty} \alpha_2^{-1}\big(\beta(\|x\|_X,t)\big) dt \\
&\leq & \int_0^{\infty} \alpha_1(\|x\|_X)e^{-t}  dt \\
& = & \alpha_1(\|x\|_X).
\end{eqnarray*}

Clearly, $V_k(x) \geq 0$ for all $x \in X$. Moreover, if $\|x\|_X >
\rho^{-1}(\tfrac{1}{k})$ holds, then, due to the continuity axiom~($\Sigma$\ref{axiom:Continuity}) for any given $d \in \Dc$ there exists $t_0 = t_0(d) >0$ so that 
$\|\phi(t,x,d)\|_X > \rho^{-1}(\tfrac{1}{k})$  for all $t \in
[0,t_0)$. Thus $G_k\big(\rho\big(\|\phi(t,x,d)\|_X\big) \big) > 0 $ for
all $t \in [0,t_0)$, which implies that $V_k(x)>0$ provided that $\|x\|_X > \rho^{-1}(\tfrac{1}{k})$.

The Dini derivative of $V_k$ at any $x \in X$ for $v \in \Dc$ is given by
\begin{eqnarray*}
\dot{V_k}_v(x)&=&\mathop{\overline{\lim}} \limits_{h \rightarrow +0} {\frac{1}{h}\Big(V_k\big(\phi(h,x,v)\big)-V_k(x)\Big) } \\
 &=&\mathop{\overline{\lim}} \limits_{h \rightarrow +0} \frac{1}{h}\Big(
\sup_{d \in \Dc} \int_0^{\infty} G_k\big(\rho(\|\phi(t,\phi(h,x,v),d)\|_X)\big) dt \\
&&\qquad\qquad - \sup_{d \in \Dc} \int_0^{\infty} G_k\big(\rho(\|\phi(t,x,d)\|_X) \big) dt\Big)\\
 &=&\mathop{\overline{\lim}} \limits_{h \rightarrow +0} \frac{1}{h}\Big(
\sup_{d \in \Dc} \int_0^{\infty} G_k\big(\rho(\|\phi(t+h,x,\tilde{d})\|_X)\big) dt \\
&&\qquad\qquad - \sup_{d \in \Dc}  \int_0^{\infty} G_k\big(\rho(\|\phi(t,x,d)\|_X)\big) dt\Big),
\end{eqnarray*}
where the disturbance function $\tilde{d}$ is defined as 
\[
\tilde{d}(t) := 
\begin{cases}
v(t), & \text{ if } t \in [0,h] \\ 
d(t-h)  & \text{ otherwise}.
\end{cases}
\]
The supremum in the first integral is taken over disturbances of a
particular form. The supremum cannot decrease, if we allow general disturbances and hence

\begin{eqnarray*}
\dot{V_k}_v(x)&\leq &\mathop{\overline{\lim}} \limits_{h \rightarrow +0} \frac{1}{h}\Big(
\sup_{d \in \Dc} \int_0^{\infty} G_k\big(\rho(\|\phi(t+h,x,d)\|_X)\big) dt \\
&&\qquad\qquad - \sup_{d \in \Dc}  \int_0^{\infty} G_k\big(\rho(\|\phi(t,x,d)\|_X)\big) dt\Big) \\
			&\leq &\mathop{\overline{\lim}} \limits_{h \rightarrow +0} \frac{1}{h}\Big(
\sup_{d \in \Dc} \int_h^{\infty} G_k\big(\rho(\|\phi(t,x,d)\|_X)\big) dt \\
&&\qquad\qquad - \sup_{d \in \Dc}  \int_0^{\infty} G_k\big(\rho(\|\phi(t,x,d)\|_X)\big) dt\Big). \\
\end{eqnarray*}
Using the inequality \eqref{eq:SimpleIneqLemma} we obtain for all $v \in \Dc$ that
\begin{eqnarray*}
\dot{V_k}_v(x)&\leq &\mathop{\overline{\lim}} \limits_{h \rightarrow +0} \frac{1}{h}
\sup_{d \in \Dc} \Big(\int_h^{\infty} G_k\big(\rho(\|\phi(t,x,d)\|_X)\big) dt \\
&&\qquad\qquad - \int_0^{\infty} G_k\big(\rho(\|\phi(t,x,d)\|_X)\big) dt\Big) \\
			&= & - \mathop{\overline{\lim}} \limits_{h \rightarrow +0} \frac{1}{h}
\sup_{d \in \Dc} \int_0^h G_k\big(\rho(\|\phi(t,x,d)\|_X)\big) dt \\
			&\leq & - \mathop{\overline{\lim}} \limits_{h \rightarrow +0} 
						\frac{1}{h} \int_0^h G_k\big(\rho(\|\phi(t,x,0)\|_X)\big) dt \\
			&=& - G_k\big(\rho(\|x\|_X)\big),
\end{eqnarray*}
where the final equality holds due to axiom~($\Sigma$\ref{axiom:Continuity}) and by continuity of $G_k$ and $\rho$.

Now we show that the $V_k$ are Lipschitz continuous on bounded balls.
Since $\Sigmafp$ is UGAS, it holds for any $d \in \Dc$, $x \in X$ and $t \geq 0$ that
\begin{equation}
\rho\big( \| \phi(t,x,d)\|_X) \leq \alpha_2^{-1}\big(\beta(\|x\|_X,t)\big) \leq e^{-t} \alpha_1(\|x\|_X).
\end{equation}
Pick any $R>0$ and $x \in X$ with $\|x\|_X \leq R$. Then for any $d \in \Dc$ and
for any $t \geq T(R,k) := \ln(1 + k \alpha_1(R))$ it holds that
\begin{equation*}
\rho( \| \phi(t,x,d)\|_X) \leq \frac{1}{k}.    
\end{equation*}
Thus, the domain of integration in the definition of $V_k$ has a finite length,
i.e. for $R >0$ and all $x \in X$ with $\|x\|_X \leq R$ the function $V_k$ can be equivalently defined by
\begin{eqnarray*}
V_k(x)&=& \sup_{d \in \Dc} \int_0^{T(R,k)} G_k\big( \rho(\|\phi(t,x,d)\|_X) \big)dt.
\end{eqnarray*}

Now pick any $x,y \in X$ such that  $\|x\|_X \leq R$  and $\|y\|_X \leq R$ and consider
\begin{eqnarray*}
|V_k(x)-V_k(y)|  &=& \Big|\sup_{d \in \Dc} \int_0^{T(R,k)} G_k\big( \rho\big(\|\phi(t,x,d)\|_X\big) \big)dt \\
&  & \quad   -\sup_{d \in \Dc} \int_0^{T(R,k)} G_k\big( \rho\big(\|\phi(t,y,d)\|_X\big) \big)dt \Big|   \\
&\leq& \sup_{d \in \Dc} \Big|\int_0^{T(R,k)} G_k\big( \rho\big(\|\phi(t,x,d)\|_X\big) \big)dt \\
&  & \qquad   - \int_0^{T(R,k)} G_k\big( \rho\big(\|\phi(t,y,d)\|_X\big) \big)dt \Big|   \\
&\leq& \sup_{d \in \Dc} \int_0^{T(R,k)} \Big| G_k\big( \rho\big(\|\phi(t,x,d)\|_X\big) \big) \\
&  & \qquad\qquad   -  G_k\big( \rho\big(\|\phi(t,y,d)\|_X\big) \big) \Big|dt \\
&\leq& \sup_{d \in \Dc} \int_0^{T(R,k)} \|\phi(t,x,d) - \phi(t,y,d)\|_X dt.
\end{eqnarray*}
Exploiting the Lipschitz continuity of the flow on compact intervals we obtain
\begin{eqnarray*}
|V_k(x)-V_k(y)|  &\leq& \sup_{d \in \Dc} \int_0^{T(R,k)} L(R,k)\|x-y\|_X dt \\
&=& T(R,k) L(R,k)\|x-y\|_X,
\end{eqnarray*}
where $L(R,k)$ is strictly increasing in both arguments.
Define $M(R,k):=T(R,k)L(R,k)$, which is also strictly increasing in both arguments.

Define $W: X\to \R_+$ by
\begin{eqnarray}
W (x) := \sum_{k=1}^{\infty} \frac{2^{-k}}{1+M(k,k)} V_k (x) \qquad \forall x \in X.
\label{eq:LF_ConverseLyapTheorem_integralType_W-LF}
\end{eqnarray}
We have $W (x) \leq \alpha_1(\|x\|_X)$ for all $x \in X$.  Since
$V_k(x)>0$ if $\|x\|_X > \rho^{-1}(\tfrac{1}{k})$ we obtain that $W(x)=0$  $\Iff$ $x = 0$.

Differentiating $W$ along the trajectory, we obtain:
\begin{eqnarray*}
\dot{W} (x) &=& \sum_{k=1}^{\infty} \frac{2^{-k}}{1+M(k,k)} \dot{V}_k (x) 
 \leq  - \sum_{k=1}^{\infty} \frac{2^{-k}}{1+M(k,k)} G_k(\rho(\|x\|_X)) 
\leq  - \psi_1(\|x\|_X),
\label{eq:WdotEstimate}
\end{eqnarray*}
where $\psi_1 \in\Kinf$ is defined for any $r \geq 0$ as
\begin{eqnarray}
\psi_1(r):=\sum_{k=1}^{\infty} \frac{2^{-k}}{1+M(k,k)} G_k\big(\rho(r)\big).
\label{eq:Psi1_Def}
\end{eqnarray}

Now pick any $R >0$ and $x,y \in X$ such that  $\|x\|_X \leq R$ and $\|y\|_X \leq R$. Then it holds that 
\begin{align*}
\big|W^\eta(x) - W^\eta (y)\big| &= \Big| \sum_{k=1}^{\infty} \frac{2^{-k}}{1+M(k,k)} \big(V^\eta_k (x) - V^\eta_k (y)\big)\Big| \\
&\leq  \sum_{k=1}^{\infty} \frac{2^{-k}M(R,k)}{1+M(k,k)} \|x-y\|_X\\
&\leq  \Big(1+ \sum_{k=1}^{[R] + 1} \frac{2^{-k}M(R,k)}{1+M(k,k)}\Big) \|x-y\|_X.
\end{align*}
This shows that $W^\eta$ is a Lyapunov function for $\Sigma$ which is Lipschitz continuous on bounded balls.
\end{proof}

To conclude the developments of this paper, we state the following
characterization of uniform global asymptotic stability.
\begin{corollary}
\label{cor:Final_Corollary}
    Consider a system $\Sigma=(X,\Dc,\phi)$ with a Lipschitz continuous
    flow.  Assume $\Sigma$ is RFC and $0$ is a robust equilibrium of $\Sigma$. 
		Then the following  statements are equivalent:
\begin{itemize}
	\item[(i)] $\Sigmafp$ is UGAS.
	\item[(ii)] $\Sigmafp$ is UGATT.
	\item[(iii)] there exists a coercive Lyapunov function for $\Sigma$.
	\item[(iv)] there exists a non-coercive Lyapunov function for $\Sigma$.
\end{itemize}
\end{corollary}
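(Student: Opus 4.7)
The plan is to close the equivalences by chaining together the results already developed in the paper, using the cycle (i)$\Rightarrow$(ii)$\Rightarrow$(i), and (i)$\Rightarrow$(iii)$\Rightarrow$(iv)$\Rightarrow$(i). Since all four hypotheses of the corollary (Lipschitz continuous flow, RFC, and robustness of the equilibrium) are globally assumed, every previously proved theorem applies directly, and essentially no fresh work is needed beyond bookkeeping.

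First, (i)$\Rightarrow$(ii) is immediate from the definitions: the $\KL$-estimate in \eqref{UGAS_wrt_D_estimate} gives, for any $r,\eps>0$, a time $\tau(r,\eps)$ such that $\beta(r,t)\leq\eps$ for $t\geq\tau(r,\eps)$, which is precisely \eqref{eq:UAG_with_zero_gain}. The converse (ii)$\Rightarrow$(i) is supplied by Proposition~\ref{prop:UGAS_Characterization}: the hypotheses of the corollary provide RFC and a robust equilibrium, so UGATT upgrades to UGAS.

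Next, (i)$\Rightarrow$(iii) is Theorem~\ref{LipschitzConverseLyapunovTheorem-1}, which explicitly constructs a coercive Lyapunov function $W^{\eta}$ via \eqref{eq:LF_ConverseLyapTheorem_maxType_W-LF}--\eqref{eq:LF_ConverseLyapTheorem_maxType_modified_Def2} under the sole hypothesis of Lipschitz continuity of the flow on compact intervals, which holds by assumption. The implication (iii)$\Rightarrow$(iv) is trivial: a coercive Lyapunov function in the sense of Definition~\ref{def:UGAS_LF_With_Disturbances} satisfies $\psi_1(\|x\|_X)\leq V(x)\leq \psi_2(\|x\|_X)$; in particular $V(0)=0$ and $V(x)>0$ for $x\neq 0$, together with the upper bound from \eqref{LyapFunk_1Eig_UGAS}, which are exactly the requirements \eqref{eq:1} of a non-coercive Lyapunov function, while the dissipation estimate \eqref{DissipationIneq_UGAS_With_Disturbances} is unchanged. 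Finally, (iv)$\Rightarrow$(i) is precisely Theorem~\ref{t:noncoeLT}, whose hypotheses (RFC and robustness of $0$) are again built into the statement of the corollary.

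This closes the cycle. As an aside, one may alternatively complete the proof by observing that (i)$\Rightarrow$(iv) follows directly from Theorem~\ref{LipschitzConverseLyapunovTheorem-2}, bypassing the coercive construction; we mention both routes because they highlight that, under the standing hypotheses of the corollary, coercive and non-coercive Lyapunov functions carry exactly the same information about asymptotic behaviour. There is no real obstacle in the argument: the only thing that must be checked carefully is that the hypotheses of each invoked result are present, and this is a direct consequence of the assumptions of Lipschitz continuity of $\phi$, RFC, and robustness of the equilibrium at $0$.
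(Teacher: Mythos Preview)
Your proof is correct and follows exactly the route the paper intends: the corollary is stated without proof as a summary, and the implications you chain together (Proposition~\ref{prop:UGAS_Characterization} for (ii)$\Leftrightarrow$(i), Theorem~\ref{LipschitzConverseLyapunovTheorem-1} for (i)$\Rightarrow$(iii), and Theorem~\ref{t:noncoeLT} for (iv)$\Rightarrow$(i)) are precisely the results the corollary is meant to tie together. There is nothing to add.
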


\section{Examples}
\label{sec:examples}

In this section we discuss the limits of the results which can be obtained
using non-coercive Lyapunov functions.
Theorem~\ref{t:noncoeLT} names three conditions for the conclusion of
uniform global asymptotic stability: (i) the fixed point should be robust,
(ii) the system should be robustly forward complete, (iii) there exists a
non-coercive Lyapunov function. Contrary to the coercive case where
the existence of a Lyapunov function implies REP and RFC, the existence of
a non-coercive Lyapunov function does not imply either REP or RFC. This is
a consequence of \cite[Remark 4]{HaS11}\footnote{We note that a few
  modifications are necessary in \cite[Remark 4]{HaS11} in order to obtain
  the desired example. Most importantly, the
  semigroups $T_j(t)$ on $X=L^p(0,1)$, $j\in\N$ should be defined by
  setting for $f\in X$
\begin{equation}
(T_j(t)f)(s) := 
\begin{cases}
2^{\frac{1}{p}}f(s+t), & \text{ if } s \in [0,1-t] \cap [4^{-j} - t,4^{-j}), \\ 
f(s+t), & \text{ if } s \in [0,1-t] \backslash [4^{-j} - t,4^{-j}), \\ 
0, & \text{ if } s \in (1-t,1]  \cap [0,1].
\end{cases}
\label{eq:HanteSigalottiEx}
\end{equation}}.

\begin{example}
\label{example:notRFC}
    In this example we show that even for finite-dimensional undisturbed systems
    the following properties are possible: (i) the system has a unique
    fixed point, (ii) there exist non-coercive Lyapunov functions which
    satisfy the decay condition~\eqref{DissipationIneq_UGAS_With_Disturbances}, (iii)
    the fixed point is not globally asymptotically stable.

    Since non-coercive Lyapunov functions for a finite-dimensional system are coercive in the neighborhood of zero, existence of such a Lyapunov function implies that the fixed point is locally asymptotically stable. Also it is
    impossible that solutions which are not attracted to the fixed point
    exist for all positive times, as then Theorem~\ref{t:noncoeLT} would
    imply global asymptotic stability (for undisturbed ODE systems forward completeness and robust forward completeness are equivalent, see e.g. \cite[Proposition 5.1]{LSW96}).

    Consider a strictly increasing, bounded, Lipschitz continuous function
    $\delta : \R\to\R$. Assume furthermore
    that $\delta(x) x > 0$ for $x\neq 0$, and for the sake of convenience that
    $\lim_{x \to - \infty} \abs{\delta(x)} = 1 =  \sup_{x \in \R} \abs
    {\delta (x)}$. Define $\varepsilon_1 = 2 \delta$, $\varepsilon_2 =
    (1/2) \delta$, which implies in particular that
\begin{equation}
    \label{eq:22}
    \lim_{x \to -\infty} \abs{\varepsilon_1(x)} > 1 >  \sup_{x \in \R} \abs {\varepsilon_2 (x)}.
\end{equation}

With these data consider the system
    \begin{equation}
        \label{eq:21}
        \begin{aligned}
            \dot x_1 &= - \varepsilon_1(x_1) - \delta(x_2), \\
            \dot x_2 &= \phantom{-} \delta(x_1) - \varepsilon_2(x_2).
        \end{aligned}
    \end{equation}
It is easy to check that the system \eqref{eq:21} is globally
asymptotically stable by considering the Lyapunov function $V_1$ given by
$V_1(x_1,x_2) = \int_0^{x_1} \delta(r) dr + \int_0^{x_2}  \delta(r) dr$. In this case we see that
\begin{equation*}
    \dot {V}_1(x_1,x_2) = - \varepsilon_1(x_1) \delta(x_1) - \varepsilon_2(x_2) \delta(x_2),
\end{equation*}
which is negative for $x = (x_1,x_2) \neq 0$. Now choose a smooth $\rho
\in\K$ with positive derivative and so that $\lim_{r \to \infty}
\rho(r) = 1$. The function $V_2 := \rho \circ V_1$ is then also a
Lyapunov function with compact sublevel sets on its image and such that $\dot V_2(x) = \rho'(V_1(x))\dot V_1(x) < 0$
for $x \neq0$.

We will now extend the Lyapunov function so that the limit for $x_1 \to
-\infty$ has a certain increase property. To this end consider a decreasing
smooth
function $\eta: \R \to [0,1]$ with support in $(-\infty, -c ]$ for
$c>0$. We will choose $c$ sufficiently large later on. We also assume that
$\lim_{x \to -\infty} \eta(x) = 1$.  Now consider the
function
\begin{equation*}
    W(x_1,x_2) = \eta(x_1) \left(1 + \arctan(x_2) \right).
\end{equation*}
Along the solution of \eqref{eq:21} we have
\begin{equation*}
    \dot  W(x_1,x_2) = - \eta'(x_1) \left(1 + \arctan(x_2)\right) (
    \varepsilon_1(x_1) + \delta(x_2)) + \frac{\eta(x_1)}{1+x_2^2} ( \delta(x_1) - \varepsilon_2(x_2))
\end{equation*}
We note that the factors $- \eta'(x_1) \left(1 + \arctan(x_2)\right)$ and
$\frac{\eta(x_1)}{1+x_2^2}$ are both nonnegative. Also $\dot
W(x_1,x_2) = 0$ for $x_1 > -c$. Using \eqref{eq:22} we now choose $c>0$ such that
\begin{equation}
    \label{eq:28}
    \varepsilon_1(-c) < -1 \quad \text {and} \quad \delta(-c) < -  \sup_{x \in \R} \abs {\varepsilon_2 (x)}.  
\end{equation}
With this choice and our assumptions on $\varepsilon_1,\varepsilon_2,\delta$ we have $\dot  W(x_1,x_2) \leq 0$ for all $x \in \R^2$.
For the Lyapunov function $V_3(x) = V_2(x) + W(x)$ we have thus that $\dot
V_3(x) < 0$ for all $x \neq 0$, but the sublevel sets of $V_3:\R^2\to[0,2+\tfrac{\pi}{2})$ are no longer
compact on its image. 

Still Theorem~\ref{t:noncoeLT} implies global asymptotic
stability appealing to $V_3$,
as trajectories exist for all positive times. The latter fact is
obvious from the boundedness of the right hand side of \eqref{eq:21}.

We will now use the previous system as the basis for our
counterexample. To this end let $\psi :\R \to (-1,\infty)$ be an
increasing diffeomorphism with $\psi(x) = x$ for $x\geq 0$ and such that
$\mathrm {ess.}\limsup_{x\to - \infty} \delta'(x)/\psi'(x) < M$ for some constant $M>0$. Then the
transformation 
\begin{equation}
    \label{eq:23}
    T: \R^2 \to (-1,\infty) \times \R , \quad (x,y) \mapsto (\psi(x),y)
\end{equation}
is a diffeomorphism. On $(-1,\infty) \times \R$ we may thus consider the
differential equation
\begin{equation}
    \label{eq:24}
    \dot z = F(z) := DT(T^{-1}(z)) f(T^{-1} (z)) = \left \{
      \begin{matrix}
          \psi'(\psi^{-1}(z_1))(- \varepsilon_1(\psi^{-1}(z_1)) - \delta(z_2)) \\
          \delta(\psi^{-1}(z_1)) - \varepsilon_2(z_2)
      \end{matrix} \right..
\end{equation}
As $\psi'(x_1) \to 0$ for $x_1 \to - \infty$, it follows that $F$ can be
extended continuously to $\R^2$ by setting $F(z_1,z_2) :=
\begin{bmatrix}
    0, & -1 - \varepsilon_2(z_2)
\end{bmatrix}^\top$ for $z \in \R^2$ with $z_1 \leq -1$. The condition
$\delta'(x)/\psi'(x) < M$ for all $x$ sufficiently negative guarantees that this
extension is Lipschitz continuous.

Transforming the Lyapunov function $V_3$ as well, we see that the
transformed version may also be
continuously extended by
\begin{equation}
    \label{eq:25}
    V(z) := \left \{ \ 
      \begin{matrix}
          V_3(T^{-1}(z)) & \quad \text {if }   z_1 > -1, \\ 
           2 + \arctan(z_2) & \quad \text {if } z_1 \leq -1.
      \end{matrix} \right. 
\end{equation}

By construction, we have for the system $\dot z = F(z)$, $z \in \R^2$ that
$ \dot V(z) < 0$ for all $z \neq0$. To arrive at an estimate of the
form~\eqref{DissipationIneq_UGAS_With_Disturbances} we define
\begin{equation*}
    h(z) := \max \left \{ 1, \frac{\norm{z}}{\abs{\dot V(z)}} \right \}
    ,\quad \norm {z} \geq 2, 
\end{equation*}
and extend $h$ to a positive, Lipschitz continuous function on $\R^2$ which is
identically equal to $1$ on $B_1(0)$.
The vector field $F_2 := h F$ is simply a time transformation of
$F$. As $h>0$ everywhere, it follows that the stability properties of
$z^*=0$ for the system
\begin{equation}
    \label{eq:27}
    \dot z = F_2(z)
\end{equation}
 are the same as those of $\dot z = F(z)$. If we consider
the decay of $V$ along the trajectories of \eqref{eq:27} we
obtain (the index denotes the system with respect to which the time
derivative is taken)
\begin{equation}
    \label{eq:26}
    \dot  V_{|F_2}(z) = h(z) \dot V_{|F}(z)  \leq - \norm {z} ,\quad \norm{z}
    \geq 2. 
\end{equation}
As $\dot V_{|F_2}(z) < 0$ for all $z\neq 0$ it is easy to find an $\alpha
\in\mathcal{K}$ such that $\dot  V_{|F_2}(z) \leq - \alpha(\norm {z})$ for
$\norm{z}< 2$. Combining this with \eqref{eq:26}, $\alpha$ may indeed be
chosen so that the decay
condition~\eqref{DissipationIneq_UGAS_With_Disturbances} holds.

It can be checked directly that solutions of \eqref{eq:27} with an initial
condition $z$ with $z_1\leq -1$ explode. This may also be concluded using
the decay condition \eqref{eq:26} together with the observation that for
such solutions $V(\varphi(t,z)) \in (1,3)$ for all $t$ in the interval
existence. ~ \hfill{} $\square$
\end{example}

The next example shows that even in the benign case of
infinite-dimensional linear systems with a bounded generator $A$ an
estimate of the form $\dot{V}(x) \leq - \gamma V(x)$ does not exclude
exponential instability if the Lyapunov function $V$ is not coercive. This
shows that in the non-coercive case it is essential to have a decay
estimate in terms of the norm of $x$ as in \eqref{eq:3}.

\begin{example}
    We consider the Hilbert space $X=\ell^2(\N,\C)$ and denote its
    elements by $x = (x_i)_{i\in \N}$, where $x_i \in \R^i$, $i=
    1,2,\ldots$. By $\|\cdot\|_2$ we denote the Euclidean norm on $\R^i$
    and the induced matrix norm.

   We construct a linear operator $A$ on $X$ as follows. Let $N_i \in
   \C^{i \times i}$ be the nilpotent matrix of order $i-1$ given by
   \begin{equation*}
    N_i =
    \begin{bmatrix}
        0 & 1      & 0     &  0\\
        0 & \ddots & \ddots & 0\\ 
        \vdots & &  0& 1\\
        0    &   \dots & \dots   &0 
    \end{bmatrix}\,.
   \end{equation*}
   Define
   \begin{equation*}
       A_i = - I_i + N_i
   \end{equation*}
   and the operator $A$ by
   \begin{equation*}
       A x = \left( A_i x_i \right)_{i\in \N} \,. 
   \end{equation*}
    As $\|A_i\|_2 \leq 2$ for all $i$, it is clear that $A$ is bounded on $X$.
    To consider the spectrum of $A$, note that for $\lambda \neq -1$ we have
   \begin{equation}
   \label{eq:ex:inveq}
       \left( \lambda I_i - A_i\right)^{-1} = \frac{1}{\lambda + 1} I_i +
       \frac{1}{(\lambda + 1)^2} N_i + \frac{1}{(\lambda + 1)^3} N_i^2 +
       \ldots + \frac{1}{(\lambda + 1)^i} N_i^{i-1}  
   \end{equation}
	Thus for $| \lambda +1 | >1$ it follows for all $i\in \N$ that $\|\left( \lambda I_i -
     A_i\right)^{-1} \|_2 \leq \sum_{k=1}^{\infty}| \lambda +1 |^{-k} =
   % (1 - \frac{1}{| \lambda +1 |} )^{-1} = 
   \frac{| \lambda +1 |-1}{| \lambda +1 |}$ and consequently,
   $\lambda I - A$ is invertible with bounded inverse. 	
	
	On the other hand, if $\lambda \in
   B_1(-1)$, then we see from \eqref{eq:ex:inveq} that for the $i$th
   standard unit vector in $\R^i$ we have
   \begin{equation*}
      \|\left( \lambda I_i - A_i\right)^{-1} e_i \|_2^2 = \sum_{k=1}^i \frac{1}{(\lambda + 1)^k} \,. 
   \end{equation*}
   This expression tends to $\infty$ as $i\to  \infty$. As a consequence
   $\lambda I - A$ cannot have a bounded inverse on $X$. As the
   spectrum of $A$ is closed, it follows that
   \begin{equation*}
       \sigma(A) = \overline{B_1(-1)} \,.
   \end{equation*}

   In particular $0\in \sigma(A)$, whence the linear system
   \begin{equation}
   \label{eq:ex:LFnc1}
       \dot x = A x
   \end{equation}
   is not exponentially stable in $x^* =0$. We will show that it is
   possible to construct a non-coercive function which decays exponentially
   along all nontrivial solutions.

  As $A_i$ is Hurwitz with eigenvalue $-1$, we may for each $i\in \N$
  choose a symmetric positive definite solution $P_i$ of the Lyapunov inequality
  \begin{equation*}
      A_i^* P_i + P_i A_i \prec - P_i \,.
  \end{equation*}
  By rescaling, if necessary, we may assume that $\|P_i \|_2 = 1$ for all
  $i$. Define the function $V: X \to \R_+$ by setting for $x = (x_i)_{i\in \N}$ 
  \begin{equation*}
      V(x) = \sum_{i=1}^\infty x_i^* P_i x_i \,.
  \end{equation*}
  Using the positive definiteness and the norm bound of the $P_i$, it is easy to see that for $x\neq 0$ we have
  \begin{equation*}
      0 < V(x) \leq \|x\|^2.
  \end{equation*}
  If we consider the decay of $V$ along solutions we have that
  \begin{equation}
	\label{eq:Vdot_2nd_example}
      \dot {V}(x) =  \sum_{i=1}^\infty x_i^* \left(A_i^* P_i + P_i
        A_i\right) x_i \leq - \sum_{i=1}^\infty x_i^* P_i x_i =
      -  V(x).
  \end{equation}
  It follows that $V(e^{At}x) \leq  e^{-t}V(x)$ for all $x\in X$, so that $V$ decays
  exponentially along the solutions of \eqref{eq:ex:LFnc1}. 

  We therefore have now two arguments that show that $V$ is not
  coercive. If $V$ were coercive, it would be a proper Lyapunov function
  and then we could conclude exponential stability of
  \eqref{eq:ex:LFnc1}. But we know from the analysis of the spectrum that
  this is not the case. 

  On the other hand, it is well known that the smallest eigenvalue of
  the matrices $P_i$ tends to $0$ as $i\to \infty$. Choosing eigenvectors $v_i$
  of norm $1$ corresponding to the smallest eigenvalues of $P_i$, and
  letting $y_i \in X$ be the vectors with only $0$ entries except for the
  entries of $v_i$ in position $i$, it
  follows that $V(y_i) \to 0$ but $\|y_i\| =1$ for all $i$.

  As a variant of this example consider operators $A_\varepsilon = A +
  \varepsilon I$ for $\varepsilon \in (0,1/2)$. These operators generate
  exponentially unstable semigroups by our considerations of the spectrum
  of $A$. On the other hand, the calculations analogous to 	\eqref{eq:Vdot_2nd_example} show that
  $V(e^{A_\varepsilon t}x) \leq  e^{(2\varepsilon- 1)t}V(x)$, so that $V$
  is exponentially decaying along solutions of these systems. ~ \hfill{} $\square$
\end{example}

\section{Conclusions}
\label{sec:conclusions}

A classical result in infinite-dimensional stability theory, which can be
proved using a generalized version of Datko's lemma, states that the origin of an
infinite-dimensional linear undisturbed system on a Banach space is
uniformly globally asymptotically stable if and only if there
exists a non-coercive Lyapunov function for this system.

In this paper we prove that the \textit{existence of a non-coercive Lyapunov
function is equivalent to uniformly globally asymptotically stable of the
equilibrium position for nonlinear infinite-dimensional systems
  with disturbances}, provided the system is robustly forward complete and
the equilibrium is robust.  In the linear case these two properties
are always satisfied, but they are essential for the validity of the
theorem for nonlinear systems.  As we show by means of a counterexample, a
nonlinear system of two ordinary differential equations, possessing a
non-coercive Lyapunov function, may fail to be forward complete.  Also it
is essential, that the decay rate of a non-coercive Lyapunov function
along the trajectory is given in terms of the state: $\dot{V}(x) \leq
-\gamma(\|x\|_X)$.  In a further counterexample we show that linear
undisturbed systems admitting a non-coercive function $V$, satisfying a
decay estimate of the form $\dot{V}(x) \leq -\gamma V(x)$, for some
$\gamma>0$, may have exponentially diverging trajectories, even though $V$
decays to zero exponentially along trajectories.
Also we show that the existence of a non-coercive Lyapunov function for a forward complete system ensures that $0$ is uniformly globally weakly attractive.
Finally, a new construction of a non-coercive global Lyapunov function is presented,
which is based on Sontag's $\KL$-Lemma and on Yoshizawa's method.

An interesting direction for future research is the development of
non-coercive Lyapunov tools for input-to-state stability of
infinite-dimensional systems. In the finite-dimensional case Lyapunov
functions which characterize uniform asymptotic stability properties with respect to
disturbances were a key step in this development.

\section*{Acknowledgements}

This research has been supported by the German Research Foundation (DFG) within the project "Input-to-state stability and stabilization of distributed parameter systems" (grant Wi 1458/13-1).

The authors thank Iasson Karafyllis for useful suggestions and comments,
in particular, for making us aware of Theorem~\ref{LipschitzConverseLyapunovTheorem-1}.

\section*{Literature}

%    Bibliographies can be prepared with BibTeX using amsplain,
%    amsalpha, or (for "historical" overviews) natbib style.
\bibliographystyle{amsplain}
%    Insert the bibliography data here.

{\bibliography{Mir_LitList_TAMS,TAMS}}

\end{document}